\crefname{hypothesis}{Hypothesis}{Hypotheses}
\title{
%Deep Direct Sampling Methods for Electrical Impedance Tomography
%Deep Learning Approaches to Learn Index Functions in Direct Sampling Methods for Solving Electrical Impedance Tomography
Learn an index operator by CNN for solving diffusive optical tomography: a deep direct sampling method
%\thanks{Submitted to the editors \today.}
%\funding{The third author was funded by NSF DMS-2012465.}
}
\author{Jiahua Jiang\thanks{School of Information Science and Technology, ShanghaiTech University
  (\email{jiangjh@shanghaitech.edu.cn}),}
  \and Yi Li\thanks{School of Information Science and Technology, ShanghaiTech University
  (\email{liyi3@shanghaitech.edu.cn}).}
\and Ruchi Guo\thanks{Department of Mathematics, University of California, Irvine
  (\email{ruchig@uci.edu}).}
}
\def\BState{\State\hskip-\ALG@thistlm}
\newcommand{\bs}{\boldsymbol}
\begin{document}

%\title{A  sampling method for solving electrical impedance tomography with limited data}
%\title{Construct deep neural networks based on direct sampling methods for electrical impedance tomography}
%\author{Ruchi Guo\thanks{
%Department of Mathematics, University of California, Irvine, ADDRESS, USA. Emails: {\tt{\{guo.1778\}@osu.edu}}. R.~Guo {was} partially supported by XXX.
%}
%\and
%Jiahua Jiang\thanks{
%Department of Mathematics, Virginia Tech, ADDRESS, USA. Emails: {\tt{\{jiahua\}@vt.edu}}. R.~Guo {was} partially supported by XXX.
%}
%}

%\thanks{blah}
%\nodate
\date{}
\maketitle

\begin{abstract}
In this work, we investigate the diffusive optical tomography (DOT) problem in the case that limited boundary measurements are available. Motivated by the direct sampling method (DSM) proposed in \cite{chow2015direct}, we develop a deep direct sampling method (DDSM) to recover the inhomogeneous inclusions buried in a homogeneous background. In this method, we design a convolutional neural network (CNN) to approximate the index functional that mimics the underling mathematical structure. The benefits of the proposed DDSM include fast and easy implementation, capability of incorporating multiple measurements to attain high-quality reconstruction, and advanced robustness against the noise. Numerical experiments show that the reconstruction accuracy is improved without degrading the efficiency, demonstrating its potential for solving the real-world DOT problems. %Both Theoretical results as well as numerical evidence justify these findings. 
\end{abstract}
\begin{keywords}
Deep Learning, inverse problems, direct sampling methods, diffusive optical tomography, reconstruction algorithm
\end{keywords}

\section{Introduction}
Diffusive optical tomography (DOT) is a promising imaging technique with many clinical applications, such as screening for breast cancer and the development of cerebral images \cite{2003CulverChoeHolbokeZubkovDurduran,2003DehghaniBrianBrooksby,2004GuZahngBartlettSchutz}. As its mechanism, the input flux of near-infrared (NIR) photons is used to illuminate the body and the output flux is measured on the surface of the body. In this process, chromophores in the NIR window such as oxygenated and deoxygenated hemoglobin, water, and lipid, are abundant in the body tissue, and a weighted sum of their contributions gives different absorption coefficients \cite{2005Choe}. Then the measured pairs of the fluxes provide some information for detecting and reconstructing the optical properties inside the body by creating images of the distribution of absorption coefficients inside the body. Let the absorption medium occupy an open bounded connected domain $\Omega\subseteq\mathbb{R}^2$ with a piecewise $C^2$ boundary. The governing equation for the radiant intensity in NIR light denoted by $I(x,\upsilon)$ is described by the radiative transfer equation (RTE) for the physically measured $I$ at the mesoscale:
\begin{equation}
\label{RTE_0}
\upsilon\cdot \nabla I(x,\upsilon) + ( \mu + \mu_s )I(x,\upsilon) - \mu_s \int_{\mathbb{S}^{d-1}}f(x,\upsilon,\upsilon') I(x,\upsilon') d\upsilon' =  q(x,\upsilon),
\end{equation} 
where $\upsilon(x)$ is the outward unit normal at $x$ to $\partial\Omega$, $\mu_s$ is the scattering coefficients, and $f$ is the scattering phase function. 

%In this work, we develop a deep direct sampling method (DDSM) to solve diffusive optical tomography (DOT) problems. Let us first describe the considered forward model. 
%Now let us describe the considered forward model. 

Assuming the scattering coefficients  $\mu_s$ is known in advance, by the calculation procedure in \cite{2002HeinoSomersalo} that introduces an energy fluency of $I$ in terms of its integral on a unit sphere $\mathbb{S}^{d-1}$, the RTE in \eqref{RTE_0} induces the following elliptic diffusion-reaction equation \eqref{dot_model} which serves as our forward problem. Let $D$ be a subdomain of $\Omega$ representing the inhomogeneous inclusions. The absorption coefficient of the media in $\Omega$ is described by a nonnegative function $\mu \in L^{\infty}(\Omega)$, while $\mu_0$ is the absorption coefficient of the homogeneous background medium and the support of $\mu-\mu_0$ occupies the subdomain $D$. We consider the DOT model that the potential $u\in H^1(\Omega)$ representing the photon density field satisfies the following equation:
\begin{align}
\label{dot_model}
- \triangle u_{\omega} + \mu u_{\omega} &= 0 ~~~~ \text{in} ~~ \Omega, \\
\frac{\partial u_{\omega}}{\partial \mathbf{ n}} & =  g_{\omega} ~~~ \text{on} ~~ \partial \Omega, ~~~ \omega = 1,2,\cdots,N,
 \end{align}
where $g_{\omega}\in H^{-1/2}(\partial\Omega)$ denotes the surface flux along $\partial\Omega$,  and $f_{\omega} = u_{\omega}|_{\partial\Omega}$ is the measurement of the surface potential on the boundary. In this paper, we consider the Neumann boundary value problem for simplicity, and the proposed algorithm can be also applied to the Robin boundary value problem. The overarching goal of the DOT problem is to recover the geometry of the inhomogeneous inclusions $D$ from the $N$ pairs of potential-flux data $(g_{\omega},f_{\omega})_{\omega=1}^N$, referred as Cauchy data pairs. It is noted that the considered model can be viewed as a special case of the more general one  \cite{2004Bal} which has more complicated jump conditions. 
The inverse procedure in the DOT problem can be described by the well-known Neumann-to-Dirichlet (NtD) mapping 
 \begin{equation}
\label{n2t_map}
\Lambda_{\mu} : H^{-1/2}(\partial\Omega) \rightarrow H^{1/2}(\partial\Omega), ~~~ g \mapsto u|_{\partial \Omega}, 
\end{equation}
where $u$ is the trace of the solution of \eqref{dot_model} according to the boundary condition $g\in H^{-1/2}(\partial\Omega)$. It is known that the NtD mapping preserves all the information needed to recover $\mu$ \cite{2004Bal}; see also the discussion in Section \ref{sec:DDSM}. However, approximating the whole NtD mapping requires a large number of Cauchy data pairs, which hinders the application in many practical situations. To circumvent the application limitation, in this work, we focus on developing the reconstruction algorithm that only requires a reasonably small number of Cauchy data pairs. 
%we shall restrict ourselves to the case of using reasonably small number of Cauchy data pairs for reconstruction.

%There have been many efforts devoted to solving DOT problems over the past decades. 
Over the past decades, much effort has been rewarded with many promising developments of solving the DOT problem. One type of the widely used algorithms are iterative methods. The augmented Lagrange method \cite{2005AbdoulaevRenHielscher} and the shape optimization methods \cite{2009ZacharopoulosSchweigerKolehmainen,2006ZacharopoulosArridge} reconstruct the inhomogeneous inclusions by minimizing a functional measuring the observed and simulated data.  To alleviate the onerous computational costs caused by the necessity of using a fine mesh, the multigrid methods  \cite{2002OhMilstein,2001YeBoumanWebbMillane} have been proposed to resolve this issue. Another standard approach formulates an integral function involving the Green's function corresponding to background absorption coefficient, and use the Born type iteration \cite{2000YaoPeiWangBarbour,1999YeWebbMillaneDownar} to solve this integral equation until convergence. Nonetheless, the major issue of these approaches is that a large number of the forward PDEs need to be solved in the iterative process, which is time-consuming and computationally infeasible in many practical situations such as three-dimensional  problems. Thus, for ultrasound-modulated DOT, the works in \cite{2014AmmariBossyGarnierNguyenSeppecher,2013AmmariGarnierNguyenSeppecher,2014AmmariNguyenSeppecher} provide the hybrid method that is capable of reconstructing the parameters in known inclusions with only a few iterations.

 Another popular group of methods for solving the DOT problem are non-iterative methods. The joint sparsity methods \cite{2011LeeKimBreslerYe,2013LeeYe} have been investigated to solve the aforementioned integration equation by a non-iterative approach in which an ill-conditioned matrix needs to be inverted and some suitable preconditioners or regularization are demanded. The well-known factorization methods described in \cite{2004Bal,2001Cheney,1998Kirsch} compute the spectral information of the boundary data mapping and determine the inclusion shape by checking the convergence property of a series involving the spectral data. Based on a nonlinear Fourier transform,  the D-bar methods in \cite{2014TamminenHoopLassas,2017TamminenTarvainenSiltanen} reconstruct the absorption coefficient by solving a boundary integral equation. Nevertheless, the severe ill-posedness of the original inverse problem will manifest itself in the computation of the boundary integral equations, which brings strenuousness in computation. This is improved  in \cite{1997KlibanovLucasFrank,1997KlibanovLucasFrankIOP} by linearizing the reconstruction problem and reducing it to a well-posed boundary-value problem for a coupled system of elliptic equations such that it can be efficiently solved. Besides these classical approaches, vast development of deep neural networks-based algorithms has been undergone for solving the DOT problem, for instance, using CNN to learn the nonlinear photon scattering physics and reconstruct the 3D optical anomalies \cite{yoo2019deep, yooadeep}, and FDU-Net that is able to fast recover the irregular-shaped inclusions with accurate localization for breast DOT \cite{deng2021deep}, and some detailed surveys can be found in the recent monologue \cite{applegate2020recent,yedder2018deep}.
% the reconstruction problem is linearized and reduces to a well-posed boundary-value problem for a coupled system of elliptic equations such that it can be efficiently solved. 
 
%The proposed neural network (NN) is based on a recently developed direct sampling method (DSM) \cite{chow2015direct} for DOT. The fundamental idea of DSM is to construct certain index function that is able to indicate the shape of inclusions. In the construction, a special duality product is designed to operate on the boundary data and some probing functions regarded as the dual of the Green’s function for the original forward problem. The DSM has been proven to be highly robust, effective and efficient. See also the applications of DSM to electrical impedance tomography \cite{chow2014direct}, inverse scattering \cite{ito2012direct,ito2013direct}, moving potential reconstruction \cite{chow2018time} and inversion of Radon transform for computed tomography \cite{2020ChowHanZou}.

Recently, direct sampling methods (DSM) arise as very appealing non-iterative strategies to solve many geometric inverse problems.
%Among the non-iterative strategies for solving the DOT problem, one of the most appealing methods is 
 %The proposed neural network (NN) is based on a recently developed direct sampling method (DSM) \cite{chow2015direct} for DOT.
The critical component of DSM is to construct a certain index function indicating the shape and location of the inclusions through a duality product operating on the boundary data and some probing functions. The DSM has been proven to be highly robust, effective and efficient for many inverse problems, such as electrical impedance tomography (EIT) \cite{chow2014direct}, inverse scattering \cite{ito2012direct,ito2013direct}, moving potential reconstruction \cite{chow2018time} and the inversion of Radon transform for computed tomography \cite{2020ChowHanZou}. For the DOT problem, the original DSM \cite{chow2015direct} designs an elegant index function format for the case of a single measurement pair. However, only a single measurement results in insufficient accuracy of the reconstruction in more complicated cases, which stymies its application in practical scenarios of medical imaging. Indeed, the closed form of explicit index function for multiple Cauchy data pairs or complex-shaped domain may become very intricate and deriving it with conventional mathematical approaches will be very challenging. 

On the one hand, deep learning recently has become an alternative approach to canonical mathematical derivation. On the other hand, it is worth mentioning that the index function of DSM can be regarded as a mapping from a specially generated data manifold to the inclusion distribution. The data manifold consists of data functions built up by solving the forward problem with background coefficient, and this underling mathematical structure serves as our crucial guideline to design the neural networks (NN) such that the complicated nonlinear mapping can be learned by data. In our previous work for EIT \cite{2021GuoJiang}, %we have found and shown that this guideline is very useful for designing suitable NN to EIT problems, and the corresponding data-driven index function can successfully capture the essential structure of the true index function, smooth out the noise and outperform the conventional DSM. 
we have found and shown that the corresponding data-driven index function can successfully capture the essential structure of the true index function, smooth out the noise and outperform the conventional DSM.

%For both EIT and DOT problems \cite{chow2015direct,chow2014direct}, the original DSM yields the elegant formula of the index function designed based on mathematical intuition and a single pair of Cauchy data. However, such simple explicit index function \cite{chow2015direct,chow2014direct} is unable to incorporate adequate information of multiple data pairs and resulting in insufficient accuracy of the reconstruction in complicated cases, which stymies its application in practical scenarios of medical imaging. 
%Moreover, the form of explicit index function for multiple Cauchy data pairs or complex-shaped domain may become very intricate and deriving it with conventional mathematical approaches will be very challenging. 

%Mathematically, it would be difficulty if not impossible to efficiently express or approximate the mapping from the data function to the inclusion distribution. 
%The approach of the original DSM based on mathematical intuition yields an elegant formula of the index function for a single Cauchy data pair for both EIT and DOT \cite{chow2015direct,chow2014direct}. 
%Such a simple explicit closed form may not be achievable for multiple data pairs in complicated cases, which hinders its application in practical situation of medical imaging. 

To address this barrier from the original DSM, in this work we develop a novel deep direct sampling method (DDSM) for solving the DOT problem based a convolutional network (CNN).
%by designing the CNN to construct the index function (approximating the true index function by learning it through the data).
%consider using DNNs to construct the index function, that is, approximating the true index function by learning it through big data
%believe DNNs provide an alternative approach to approximate the true index function by learning it through big data, which is our major motivation to develop a novel deep direct sampling method (DDSM) for solving the DOT problem. 
The main benefits of the proposed DDSM contain:
\begin{itemize}
\item It is easy to implement for 2D and particularly 3D cases which challenge many conventional approaches.
\item The index function in the DDSM is able to incorporate multiple Cauchy data pairs which enhances the reconstruction quality and robustness with respect to noise. 
%breaking the accuracy bottleneck and significantly enhances the reconstruction quality and robustness with respect to noise. 
\item The DDSM inherits the high efficiency of DSM  \cite{chow2015direct}, which is benefitted from its offline-online decomposition structure. %Although the training process is more costly than the DSM due to the optimization process with large number of parameters, it only needs to be done once in the offline stage. 
For given measurements, the reconstruction is based on the fast evaluation of the CNN-based index function in the online stage, which has almost the same speed as the original DSM. 
\item Similar to DSM, constructing data functions by solving elliptic equations smoothes out the noise on the boundary such that the resulting NN is highly robust against the noise. 
%Thus, our proposed algorithm is competent to smooth out the noise and handle spatially-variant noise in the Cauchy data. 
%\item Only a few PDEs corresponding to the Cauchy data with the background constant absorption coefficient need to be solved to generate the data functions, which makes the implementation highly efficient even in three dimensional cases.
\item It is capable of incorporating very limited boundary data points to achieve satisfactory reconstruction.
%The limitation in the availability of the boundary data points doesn't hamper the reconstruction quality, which is demonstrated numerically in Section \ref{sec:num}. 
\item It can yield adequate reconstruction even if the absorbing coefficients of materials to be recovered are significantly different from those used for training, which is of much practical interest as only very rough guess is needed.
%The training process in the DDSM doesn't reply on the value of the absorbing coefficients of the training data, thus the DDSM can successfully reconstruct the inclusions that have very different values of the absorbing coefficients. 
\end{itemize}
%For solving DOT problems, to address this barrier, we believe deep neural networks (DNNs) provide an alternative approach to approximate the true index function by learning it through big data, which serves the major motivation for the proposed deep direct sampling method (DDSM). 
%In this work, the proposed DDSM inherits some advantages of DSM. For example, the computation of DDSM does not require iteratively solving the forward problems; instead only a few PDEs corresponding to the Cauchy data with the background constant absorption coefficient need to be solved to generate the data functions. It makes the implementation of highly efficient even in 3D. In addition, similar to DSM, these data functions smooth out the noise on the boundary and become very smooth inside the domain such that the resulting NN is highly robust to noise. It is also easy to be implemented based on those recently developed highly sophisticated deep learning packages. 

The rest of the paper is organized as follows. In Section \ref{sec:DSM}, we briefly review the original DSM. The development of our proposed DDSM and its theoretical justification are introduced in Section  \ref{sec:DDSM}. The numerical experiments  to validate the advantages of the DDSM are provided in Section  \ref{sec:num}. Finally, Section \ref{sec:conclusions} summarizes the research findings. 

%This article consists of 5 additional sections. In the next section, we briefly review the original DSM. In Section \ref{sec:DDSM}, we develop our DDSM and provide its theoretical justification. In the last section, we present a group of numerical experiments to validate the advantages of the DDSM.

%The crucial idea is to incorporate a data function that is constructed by solving the forward problem with background coefficient and the boundary data, and this data function is able to capture the fundamental mathematical structure of the true index function and smooth out the noise. Then a fundamental question is how to explicitly express or efficiently approximate the mapping from the data function to the inclusion distribution. The approach of the original DSM is based on certain mathematical intuition, which yields an elegant closed form of the index function for a single Cauchy data pair for both EIT and DOT \cite{chow2015direct,chow2014direct}. However, the classical derivation may become much more complicated or even unachievable for multiple data pairs. This difficulty hinders its application in practical situation of medical imaging. We believe the neural network and deep learning  provides an alternative approach to approximate this mapping by learning it through big data. 

\section{Direct Sampling Methods}
\label{sec:DSM}

In this section, we briefly review the conventional DSM proposed by Chow et al. in \cite{chow2015direct} for DOT that will serve as the framework guiding us to design suitable neural networks. The main idea is to approximate an index function satisfying 
\begin{equation}
\label{index_fun}
\mathcal{I}(x) = 
\begin{cases}
      & 1~ \text{if} ~ x\in D, \\
      & 0~ \text{if} ~ x \in \Omega\backslash\overline{D},
\end{cases}
\end{equation}
which depends on the given Cauchy data. Since the key structure of the index function in \cite{chow2015direct} assumes only a \textit{single} pair of Cauchy data, we follow this assumption in this section.
%In Section \ref{sec:DDSM}, we shall see the proposed DDSM can easily handle multiple data pairs.

We begin with introducing a family of probing functions $\eta_x(\xi)$ with $x\in\Omega$ on $\partial\Omega$ which are the fundamental ingredients for both the theory of uniqueness of DOT \cite{2004Bal} and the DSM  \cite{chow2015direct}.
%theoretically determining the shape of $D$ \cite{2004Bal} and the algorithm of DSM \cite{chow2015direct}. 
Consider the following diffusion equation with homogeneous background medium absorption coefficient $\mu_0$:
\begin{equation}
\label{bg_proj_1}
- \triangle w_x + \mu_0 w_x = \delta_x ~~~~ \text{in} ~ \Omega; ~~~ w_x = 0 ~~~ \text{on}~ \partial\Omega,
\end{equation}
where $\delta_x(\xi)$ is the delta function associated with $x\in\Omega$. For a fixed point $x \in \Omega$, the probing function $\eta_{x}$ is defined as the surface flux of $w_x$ over $ \partial\Omega$,
\begin{equation}
\label{prob}
\eta_x(\xi) := \frac{\partial w_x(\xi)}{\partial \mathbf{ n}}, ~~~ \xi\in \partial\Omega.
\end{equation}
%In addition, being one important structure of the index functions, the following duality product are introduced in \cite{chow2015direct}:
With the following duality product
\begin{equation}
\label{dual_prod}
\langle \phi, \psi \rangle_{\partial\Omega,s} : = \int_{\partial\Omega} (-\triangle_{\partial\Omega})^s\phi \, \psi ~ ds ~~~~ \text{on} ~H^{2s}(\partial\Omega)\times L^2(\partial\Omega),
\end{equation}
%with the duality product and the probing functions, the index function can be then defined as
the index function can be defined as 
\begin{equation}
\label{dsm_index_fun}
\mathcal{I}(x) = \frac{ \langle \eta_x, f - \Lambda_{\mu_0}g \rangle_{\partial\Omega,s} }{|\eta_x|_Y} 
\end{equation}
where $|\cdot|_Y$ is an algebraic function of seminorms in $H^{2s}(\partial\Omega)$, and a typical choice in \cite{chow2015direct} is $|\cdot|_Y = |\cdot|^{1/2}_{H^1(\partial\Omega)}|\cdot|^{3/4}_{L^2(\partial\Omega)}$. In the DSM for various geometric inverse problems \cite{chow2015direct,chow2014direct,ito2013direct}, this duality product structure plays an important role in index functions since it can effectively remove the errors contained in the data $f - \Lambda_{\mu_0}g$ by the high smoothness of $\eta_x$ near $\partial\Omega$.

Note that the format of the index function in \eqref{dual_prod} may not be computationally effective since probing functions $\eta_x$ change with respect to $x$. Therefore, an alternative characterization of the index function is derived in \cite{chow2015direct} which is the foundation of our neural network. Let $\varphi$ denote the solution of the following equation also with only the background absorption coefficient 
\begin{equation}
\label{bg_proj_2}
- \triangle \varphi + \mu_0 \varphi = 0 ~~~~ \text{in} ~ \Omega; ~~~ \varphi = -(-\triangle_{\Gamma})^s (f-\Lambda_{\mu_0}g)~~~ \text{on}~ \partial\Omega.
\end{equation}
Then the index function in \eqref{dsm_index_fun} can be effectively computed through $\varphi$. To show the relationship between the duality product and $\varphi$, we provide a brief derivation for $s=0$. Based on the equation \eqref{bg_proj_1}, the definition of duality product in \eqref{prob} and Green's formula, we have
%The equation \eqref{bg_proj_1}, the definition \eqref{prob} and Green's formula yield
\begin{equation}
\begin{split}
\label{dsm_index_fun_1}
\langle \eta_x, f - \Lambda_{\mu_0}g \rangle_{\partial\Omega,0} &= \int_{\partial\Omega} \eta_x( f - \Lambda_{\mu_0}g )ds  = -\int_{\partial \Omega} \frac{\partial w_x}{\partial \mathbf{ n}} \varphi ds \\
& = \int_{\partial\Omega} w_x\frac{\partial \varphi}{\partial \mathbf{ n}} - \frac{\partial w_x}{\partial \mathbf{ n}} \varphi ds  = \int_{\Omega} w_x \triangle \varphi - \varphi \triangle w_x dx \\
& = \int_{\Omega} w_x (\mu_0 \varphi) - \varphi \triangle w_x dx = \varphi(x).
\end{split}
\end{equation} 
The same result can be obtained for $s=1$ in \cite{chow2015direct}. With \eqref{dsm_index_fun} and \eqref{dsm_index_fun_1}, the index function can be equivalently rewritten as
%Thus, with this  the index function can be equivalently rewritten as
\begin{equation}
\label{dsm_index_fun_2}
\mathcal{I}(x) = \frac{ \varphi(x) }{|\eta_x|_Y}.
\end{equation}

It is highlighted that the index function depends on the Cauchy data only through the function $\varphi$ while the value of $|\eta_x|_Y$ is only based on the geometry of $\Omega$. This mathematical structure guides us to set $\varphi$ as the input of the networks designed to approximate the index function, which will be detailed in the next section. Given the importance of $\varphi$, we shall call it the Cauchy difference function in our following discussion. Note that $\varphi$ is readily solvable and only needs to done once from \eqref{bg_proj_2} for a single pair of Cauchy data $(f,g)$. Since there is only one PDE to solve in the reconstruction procedure, the cost is much lower than the optimization-based methods. Our proposed DDSM inherits this advantage, namely only $N$ PDEs are required to be solved for $N$ Cauchy data pairs.

Besides $\varphi$, the index function in \eqref{dsm_index_fun_2} is determined by the probing functions $\eta_x$ computed from \eqref{bg_proj_1} and \eqref{prob}. However, the evaluation of $\eta_x$ requires solving \eqref{bg_proj_1} for every $x$ that needs to be located, which maybe inefficient. To address this issue, the explicit formulas are provided in \cite{chow2015direct}  for some specific domains; for instance, the probing function corresponding to a unit circle is
\begin{equation}
\label{prob_cir}
\eta_x(\xi) = \frac{1}{2\pi} \sum_{n\in\mathbb{Z}} \frac{J_n(i\sqrt{\mu_0}r_x)}{J_n(i\sqrt{\mu_0})} e^{in(\theta_x-\theta_{\xi})}, ~~~ \xi\in\mathbb{S}^1,
\end{equation}
where $J_n$ are the Bessel functions, and $(r_x,\theta_x)$ is the polar coordinate for a point $x$. However, it has not escaped our notice that such explicit formulas may not be available for all type of geometries. Different from the analytical typed index function \eqref{dsm_index_fun} in the conventional DSM, the index function in the proposed DDSM is represented by a neural network and learned from data, which is not in a closed form but capable of handling more complicated and practical problems. 
%For the proposed DDSM, we shall see that it can be readily and naturally used in any shaped domain. 
%Finally, we mention that, 
%Different from the conventional methods with closed forms, the analytical structure of the index function in \eqref{dsm_index_fun} as well as the quantity $|\cdot|_Y$ do not appear explicitly in the proposed DDSM, while the index function of DDSM is represented by a neural network and learned through massive data.  

\section{Deep Direct Sampling Methods}
\label{sec:DDSM}

% add the comparison of different ways to add noise to emphasize the necessarily of including multiple pairs of cauchy data

Despite the successful application of the original DSM, we believe some aspects can be further improved with the recently developed DNN techniques, for which conventional mathematical derivation may face some difficulties. 

%This is where the recently developed deep neural network and data driven approaches can exploit their advantages.

First, the index function format above may only involve a {\em single} Cauchy data pair, which may limit its accuracy. Moreover our numerical results show that including one pair of Cauchy data is robust for the spatially-invariant noise used in \cite{2015ChowItoLiuZou}. But when the noise become highly spatially variant for boundary data points, for example independent and identically Gaussian distribution illustrated in the left plot of Figure \ref{fig: noise}, the reconstruction may not be robust as shown by the numerical results in Figure \ref{cir-4-comp_noise_N1} , which brings out the importance and necessarity of including multiple measurements. 
%In \cite{2015ChowItoLiuZou}, including one pair of Cauchy data is robust for the case that every boundary points has spatially-invariant noise. Note that this is not a practical assumption, although the noise of the boundary points may follow some certain distribution, for example Gaussian distribution illustrated in Figure \ref{fig: noise}. Our numerical results in Figure \ref{cir-4-comp_noise_N1} indicates that the index function learned from one pair of Cauchy data is not capable to handle to reconstruction from the noisy data, which brings out the importance and necessary of including multiple pairs of Cauchy data. 
However, it has been unclear so far how to theoretically develop an explicit index function with a closed form that systematically incorporate multiple Cauchy data pairs through canonical mathematical derivation, though some basic strategies can be applied such as average, maximum or product of each individual index function. Second, even for the case of a single measurement, the format of the index function $\mathcal{I}(x)$ may not be the optimal approximation to the true index function, for example the empirical choice of the tuning parameter $s$ and norm $| \cdot |_{Y}$. We believe this is where the DNN models can exploit their advantages to replace some theoretical derivation by data driven approaches such that the more optimal index function can be obtained.% based on a large number of data. 

%So it motivates us to use deep neural network (DNN) models to learn the index functions through a large number of data since DNNs are able to mimic the human’s learning process based on physical data.

\begin{figure}[h!]
\centering
\includegraphics[width = 0.32\textwidth]{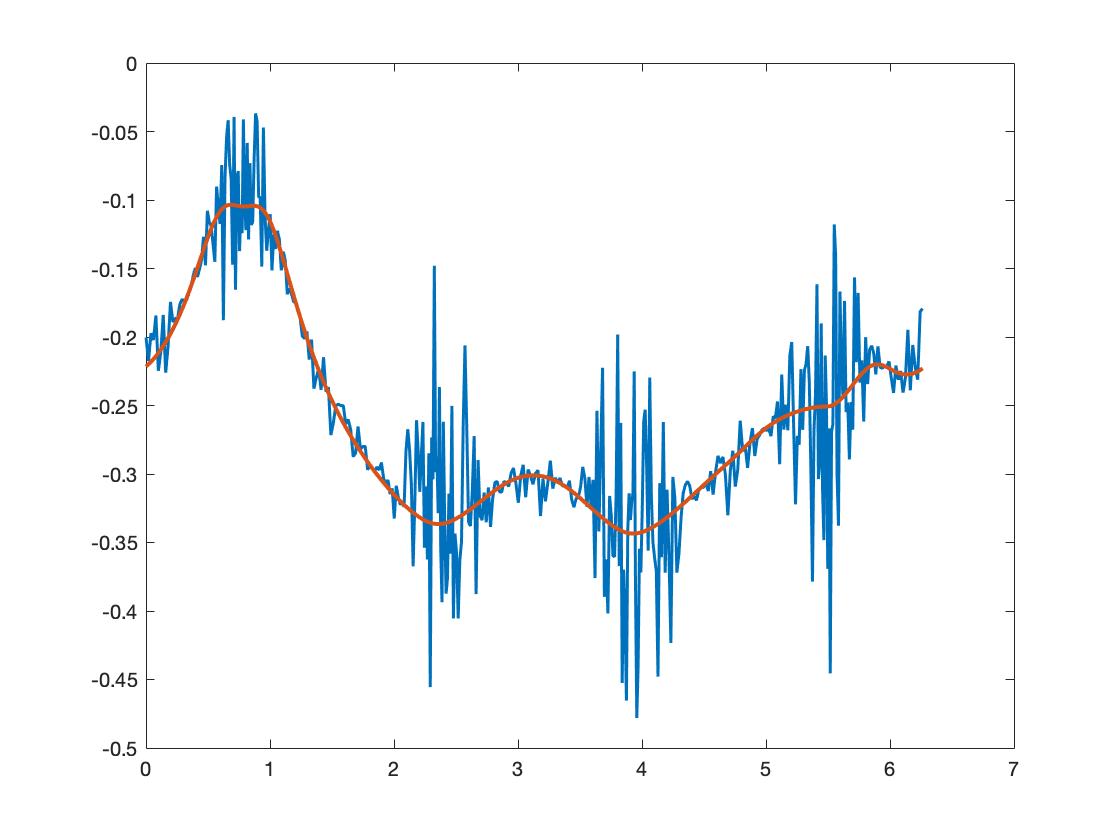}
\includegraphics[width = 0.32\textwidth]{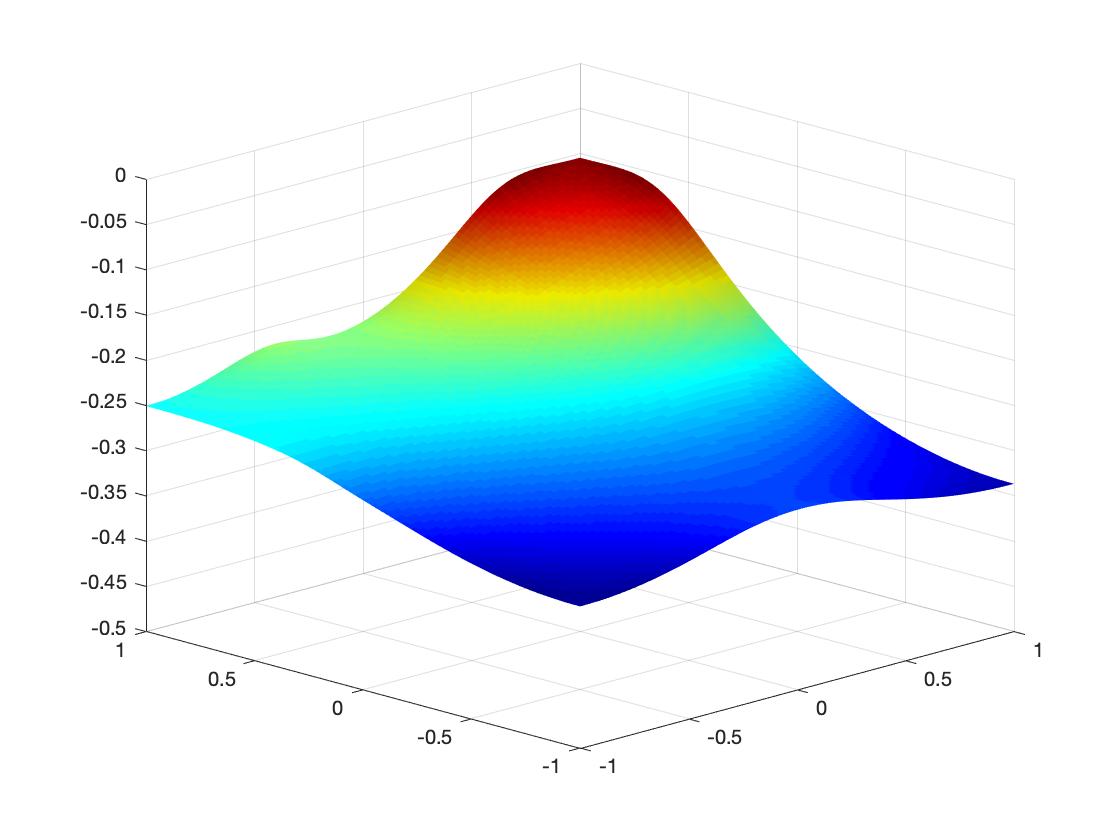}
\includegraphics[width = 0.32\textwidth]{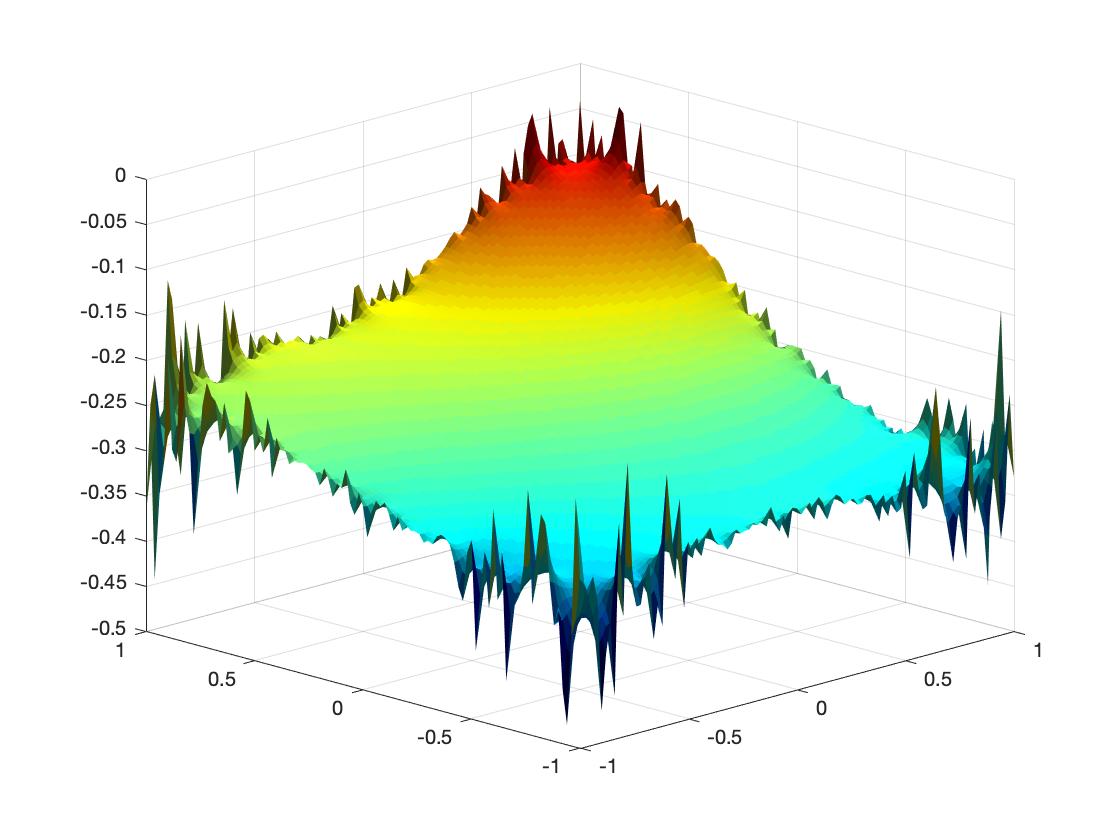}
\caption{The boundary data with independent and identically Gaussian noise (left), the generated $\varphi(x)$ without noise (middle), $\varphi(x)$ with noise (right).}
\label{fig: noise}
\end{figure}

Therefore, to enhance the performance of DSMs based the aforementioned aspects, in this work, we propose a {\em Deep Direct Sampling Method (DDSM)} by mimicking the underling mathematical structure suggested by the DSM. For simplicity,  we mainly discuss the two-dimensional case and the method can be readily and naturally extended to the three-dimensional case. We have implemented the method for both the 2D and 3D DOT problems where the numerical examples and reconstruction results will be provided in Section \ref{sec:num}. 
%associating with the experiments of two-dimensional cases to demonstrate the performance of the DDSM. 

\subsection{Neural Network Structure}

%which is theoretically verified in Section \ref{proof}.

We note that the index function (\ref{dsm_index_fun_2}) suggests the existence of a non-linear mapping or operator from $\varphi$ to the location of $x$, namely whether it is inside or outside of the inclusions. We shall see in Subsection \ref{proof} that this operator theoretically exists if the NtD mapping is given, i.e., all the Cauchy data pairs are available, but  theoretically deriving an explicit index function that is able to fully incorporate the information we have such as multiple Cauchy data pairs, is not easy. This motivates us to design a DNN to approximate this operator from {\em Cauchy difference functions $\{\varphi^{\omega}\}_{\omega=1}^{N}$} to the inclusion distribution,
%The goal of the DNN is to approximate the nonlinear functional(or operator) by training the mapping from {\em Cauchy difference functions $\{\varphi^{\omega}\}_{\omega=1}^{N}$} to the inclusion distribution, 
where $\varphi^{\omega} (1 \le \omega \le N)$ is the solution of (\ref{bg_proj_2}) with the boundary value formed by the $\omega-$th pair of Cauchy data $f_{\omega} - \Lambda_{\sigma_0}g_{\omega}$, namely
\begin{equation}
\label{eq_multiphi}
- \triangle \varphi^{\omega} + \mu_0 \varphi^{\omega} = 0 ~~~~ \text{in} ~ \Omega; ~~~ \varphi^{\omega} = -(-\triangle_{\Gamma})^s (f_{\omega} - \triangle_{\partial\Omega}g_{\omega})~~~ \text{on}~ \partial\Omega.
\end{equation}
Then, the  input of the DNN is designed as $(x, \varphi^1, \dots, \varphi^{N})$. Using these data functions as the input is one of the main difference between our proposed DDSM and and the learning-based approaches reviewed in the literature, which  brings quite some advantages. First, it extends the boundary data to the domain interior that mimic images and features, for instance the input can be treated as a $(N+2)$-channel image, and the nonlinear operator to be trained can be viewed as semantic image segmentation process \cite{chen2017deeplab, ioffe2015batch} partitioning a digital image into multiple segments (set of pixels) based on two characteristics: inside or outside the inclusions. The relationship is listed in table \ref{tab: dsm-dot} for readers from various background. This analogy provides us with some well-established tools such as the architecture of the CNN that achieves state-of-the art image segmentation in many datasets \cite{ ioffe2015batch}, for designing the DNN that is combined with the DSM for solving the DOT problem. 
%for example it naturally prompts us to hybridize CNN and DSM. 
The new index function  is assumed to take the form 
\begin{equation}
\mathcal{I} = \mathcal{F}_{\text{CNN}}(x, \varphi^1, \dots, \varphi^N)~:~~ [H^1(\Omega)]^{2N+2} \rightarrow L^2(\Omega),
\end{equation}
where $ \mathcal{F}_{\text{CNN}}$ is a function expressed by CNN with parameters to be trained. This structure agrees with the theory discussed in Subsection \ref{proof}. Note that in this framework $\mathcal{I}$ is a functional or operator from one Sobolev space to another, so in the following discussion we shall call it an index functional, which is one of the main differences from the original DSM. In addition, a remarkable feature of generating $\varphi$ as the inputs by solving the elliptic type PDEs  is that the noise can be smoothed out at the boundary, namely they are highly smoothed inside the domain. For example, in Figure \ref{fig: noise}, $\varphi$ on the right has rough behavior only near the boundary but very smooth inside the domain while $\varphi$ without noise is shown in the middle. 
%comparable with its counterpart without noise in the middle plot.

\begin{table}[h!]
\centering
\begin{tabular}{|c|c|c|}
\hline
        & DSM for DOT & Image Segmentation \\
        \hline
 $x$  & mesh point in $\Omega$ & pixel \\ 
 \hline
 $\phi^{\omega}(x)$ & Cauchy difference functions & image features (channels) \\  
 \hline
 $\mathcal{I}$ & index of inclusion distribution & dense prediction \\
 \hline
\end{tabular}
\caption{Relationship between DSM for DOT problems and image segmentation problems}
\label{tab: dsm-dot}
\end{table}

Now we proceed to describe the detailed structure of the proposed CNN. For simplicity, we first assume that $\Omega$ has rectangular shape and leave the general situation for later discussion about the implementation details. Then we suppose that $\Omega$ is discretized by a $n_1 \times n_2$ Cartesian grid where $n_1$ and $n_2$ are corresponding to the direction of $x_1$ and $x_2$ respectively. Based on the previous explanation, the input of the CNN is a $3$D matrix (a 4D tensor for 3D problems). For example, for an inclusion sample with $N$ Cauchy difference functions $\{ \varphi^{\omega}\}_{\omega=1}^{N}$ generated from $\{ g_{\omega}, f_{\omega}\}_{\omega = 1}^N$, the corresponding input denoted by $z_{\text{in}} \in \mathbb{R}^{n_x \times n_y \times (N+2)}$ is a stack of $N+2$ matrices in $\mathbb{R}^{n_1 \times n_2}$, where the first two slices are formed by spatial coordinates $x_1$ and $x_2$ respectively, and the rest $N$ slices are the numerical solutions $\varphi^1(x), \dots, \varphi^N(x)$ computed at the Cartesian grid points. The pictorial elucidation of $z_{\text{in}}$ is shown in Figure \ref{fig: CNN}.

\begin{figure}[h!]
\centering
\includegraphics[width = \textwidth]{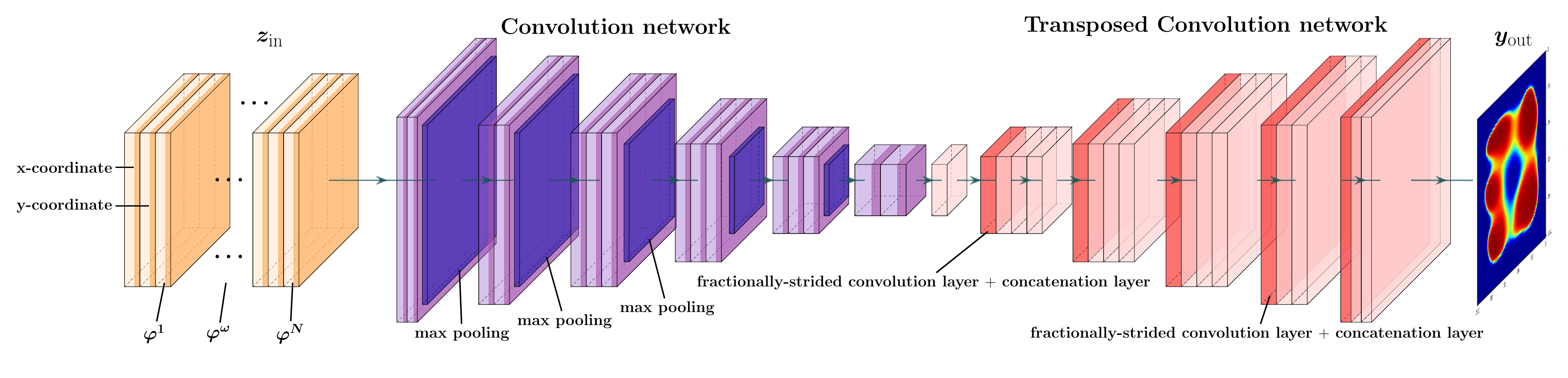}
\caption{The structure of CNN for 2D DOT problem. The input is a $3$D matrix in $ \in \mathbb{R}^{n_x \times n_y \times (N+2)}$ and the output is a matrix in $\mathbb{R}^{n_1 \times n_2}$.}
\label{fig: CNN}
\end{figure}

The overall configuration of the propose CNN consists of two parts -- convolution and transposed convolution networks as illustrated in Figure \ref{fig: CNN}. The convolution network corresponds to feature extractor that transforms the input to multidimensional feature representation,
%The proposed CNN architecture is composed of convolution networks and transposed convolution networks, where the detailed configuration is illustrated in Figure \ref{fig: CNN}. 
%The convolution part consists of several blocks, 
%where each block includes convolution layers, activation layers and max-pooling layers,
%Here, the max-pooling layers mainly help in highlighting the most prominent features of the input image by selecting the maximum element from each non-overlapping subregions. 
where the $i-$th block can be represented as 
\begin{equation}
\phi^{c}_{i}(z_{\text{conv}}) = \mathcal{M}(\kappa(\beta(W_{\text{conv}} * z_{i} + \bs{b}_{\text{conv}}))),
\label{eq:cnn1}
\end{equation}
in which $z_i$ is the input image, $\mathcal{M}$ is the max-pooling layer that mainly help in highlighting the most prominent features of the input image by selecting the maximum element from each non-overlapping subregions, $W_{\text{conv}}$ is the convolution filter for the 2$D$ convolutional layer,  $\kappa$ indicates the activation function, $\beta$ is batch normalization layer \cite{ioffe2015batch}, $*$ denotes the convolution operation, and $b_{\text{conv}}$ refers the bias. The transposed convolution network is more like a  backwards-strided convolution that produces the output by dilating the extracted feature with the learned filter, where the $i-$th block can be expressed as
%The transposed convolution part contains several blocks as well, where each one includes a transposed convolution to extrapolate the output of the convolution part to a large size image (higher resolution). The $i-$th block can be expressed as 

\begin{equation}
\phi^{t}_{i}(z_{\text{trans}}) = \mathcal{C}(\kappa(\beta(\mathcal{T}(z_{i} , W_{\text{trans}}, \bs{b}_{\text{trans}})))),
\label{eq:cnn2}
\end{equation}
where $\mathcal{T}$ refers a fractionally-strided convolution layer, $ W_{\text{trans}}$ and $\bs{b}_{\text{trans}}$ are the corresponding transposed convolutional filter and the bias, $\mathcal{C}$ is a concatenation layer, and other notations are the same as (\ref{eq:cnn1}). $\Theta$ is defined as all the unknown parameters to be learned in training, which includes the convolution and  transposed convolutional filters, as well as the biases. In this work, we consider two activation functions: the ReLu and sigmoid
\begin{equation}
\kappa(z) = \max\{0, z\} ~~~ \text{and} ~~~~ \kappa(z) = \frac{1}{1+ e^{-z}}.
\end{equation}
It is known that choosing the ReLu can significantly enhance the sharpness of reconstruction. But our experience suggests that only using ReLu may not yield satisfactory reconstructions sometimes for the situation that inclusions are far away from the training data set. Nevertheless, the numerical results show that using the sigmoid activation in the last layer may improve the reconstruction for those situations.
%So we use the sigmoid function in last layer of the neural network to reduce the sharpness but increase the extrapability, and the numerical results suggest the reconstruction is indeed improved.
%Our experience suggests that choosing the ReLu as the activation function except the last block, 
%\begin{equation}
%\kappa(z) = \max\{0, z\}.
%\end{equation}
%For the last block, the sigmoid function 
%\begin{equation}
%\kappa(z) = \frac{1}{1+ e^{-z}},
%\end{equation}
%is chosen as the activation function. 

Then the entire CNN model can be represented as 
\begin{equation}
\bs{y}_{\text{out}} = \phi^{t}_{N_t} \circ  \dots \circ  \phi^{t}_{1} \circ \phi^{c}_{Nc} \circ \dots \circ  \phi^{c}_{1}(z_{\text{in}}),
\label{eq:cnn3}
\end{equation}
where the output $\bs{y}_{\text{out}}$ is a $n_1 \times n_2$ matrix which is supposed to approximate an inclusion distribution, i.e., the entire index function values on the domain $\Omega$. Here the complicated DNN function can approximate the index functional or operator, namely
$$
\phi^{t}_{N_t} \circ  \dots \circ  \phi^{t}_{1} \circ \phi^{c}_{Nc} \circ \dots \circ  \phi^{c}_{1} \approx \mathcal{I},
$$
where, again, the input is not the data at individual points but on the entire domain. As mentioned before, different from the conventional DSM, there is no closed form for the DNN-based index functional, but those parameters to be trained may better approximate the true index operator for the current available data.

Let $S$ be the total number of training samples. To measure the accuracy of the CNN model (\ref{eq:cnn3}), we employ the mean squared error (MSE) as the loss function 
\begin{equation}
\mathcal{L}_{\text{loss}}(\Theta) = \frac{1}{S}\sum_{\ell = 1}^{S}(\bs{y}_{\text{out}}(z_{\text{in}}^{\ell}) - \mathcal{I}^{\ell})^{\top}(\bs{y}_{\text{out}}(z_{\text{in}}^{\ell}) - \mathcal{I}^{\ell}),
\label{eq:lossfun}
\end{equation}
where $\mathcal{I}^{\ell}$ is the true distribution (index values) corresponding to the $\ell-$th inclusion sample, $z_{\text{in}}^{\ell}$ denotes the input image that is related to Cauchy difference functions $\{\varphi^{(\ell, \omega)}\}_{\omega=1}^{N}$ for the $\ell-$th inclusion sample. To reduce the infeasibility of gradient descent  algorithm for large datasets, stochastic gradient descent (SGD) is implemented to find the minimization of the loss function (\ref{eq:lossfun}), for which we omit the details here.

\subsection{Unique Determination}
\label{proof}

Note that the proposed DNN model implicitly assumes that inclusion distribution can be uniquely determined by those data functions $\{\varphi^{\omega}\}_{\omega=1}^N$. In this subsection, we show that this is indeed true for $N\rightarrow \infty$ and $\mu_0>0$, $\mu\ge \mu_0$ in $D$, i.e., the boundary data are all known in the Sobolev space $H^{-1/2}(\partial\Omega)\times H^{1/2}(\partial\Omega)$.
%provide some theoretical justification for the proposed DDSM from the perspective that the data functions $\phi^{\omega}$ can indeed uniquely determine the inclusion shape by assuming they are all known in the Sobolev space $H^{-1/2}(\partial\Omega)$ and $\mu_0>0$, $\mu\ge \mu_0$ in $D$. 
Let us recall some known results at first. Define the difference between the operators $\Lambda_{\mu}$ and $\Lambda_{\mu_0}$ as
\begin{equation}
\label{diff_Lam_1}
\widetilde{\Lambda}_{\mu} = \Lambda_{\mu} - \Lambda_{\mu_0}~ : ~ H^{-1/2}(\partial\Omega) \rightarrow H^{1/2}(\partial\Omega).
\end{equation}
According to \cite{2004Bal}, based on the assumption that $\mu_0>0$ and $\mu\ge \mu_0$ in $D$, it is known that $\widetilde{\Lambda}_{\mu}$ is a self-adjoint and positive operator, and thus it admits the eigenpairs $(\lambda_\omega,\nu_\omega)$, $\omega=1,2...$, with $\lambda_\omega>0$, such that
\begin{equation}
\label{diff_Lam_2}
\widetilde{\Lambda}_{\mu} \nu_\omega = \lambda_\omega \nu_\omega, ~~~ k=1,2... \,,
\end{equation}
where $\nu_\omega$ form an orthogonal basis of the space $H^{-1/2}(\partial\Omega)$. Letting $\mathcal{R}(\widetilde{\Lambda}_{\mu}^{1/2})$ be the range of the operator $\widetilde{\Lambda}_{\mu}^{1/2}$ and by the factorization theory for DOT \cite{2004Bal}, it is known that the information of $\mathcal{R}(\widetilde{\Lambda}_{\mu}^{1/2})$ together with the probing functions can be used to determine the inclusion distribution. Here we shall employ this theory to show that the data functions can also uniquely determine the inclusion distribution.

\begin{theorem}
\label{thm_dfs}
Given a collection of orthonormal basis functions $\{g_{\omega}\}_{\omega=1}^{\infty}$ in $H^{-1/2}(\partial\Omega)$, let $\{g_{\omega},\Lambda_{\mu} g_{\omega}\}_{\omega=1}^{\infty}$ be the corresponding Cauchy data pairs and let $\{\varphi^{\omega}\}_{\omega=1}^{\infty}$ be the generated data functions. Then the inclusion can be uniquely determined by the data functions $\{\varphi^{\omega}\}_{\omega=1}^{\infty}$.
\end{theorem}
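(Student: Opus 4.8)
The plan is to reduce the statement to the classical factorization theory for DOT, by establishing the chain of implications
$$
\{\varphi^{\omega}\}_{\omega=1}^{\infty}\ \Longrightarrow\ \widetilde{\Lambda}_{\mu}\ \Longrightarrow\ \mathcal{R}(\widetilde{\Lambda}_{\mu}^{1/2})\ \Longrightarrow\ D ,
$$
in which the last two arrows are exactly the content of the factorization theory recalled above from \cite{2004Bal} (knowledge of $\widetilde{\Lambda}_{\mu}$ gives its square root and range, and the range together with the probing functions pins down $D$), so that the only genuinely new work is the first arrow --- recovering the NtD difference operator from the data functions.

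First, I would note that since $f_{\omega}-\Lambda_{\mu_0}g_{\omega}=\widetilde{\Lambda}_{\mu}g_{\omega}$, the Dirichlet datum prescribed in \eqref{eq_multiphi} is $-(-\triangle_{\Gamma})^{s}\widetilde{\Lambda}_{\mu}g_{\omega}$, so that taking the boundary trace of the given function $\varphi^{\omega}\in H^1(\Omega)$ returns $(-\triangle_{\Gamma})^{s}\widetilde{\Lambda}_{\mu}g_{\omega}$ on $\partial\Omega$. For $s=0$ this is $\widetilde{\Lambda}_{\mu}g_{\omega}$ itself; for $s>0$ one inverts $(-\triangle_{\Gamma})^{s}$ on the orthogonal complement of the constants in $L^2(\partial\Omega)$ to recover $\widetilde{\Lambda}_{\mu}g_{\omega}$ (up to the subtlety flagged below). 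Since $\{g_{\omega}\}_{\omega=1}^{\infty}$ is a complete orthonormal system in $H^{-1/2}(\partial\Omega)$ and $\widetilde{\Lambda}_{\mu}$ is bounded and linear, knowing $\widetilde{\Lambda}_{\mu}g_{\omega}$ for every $\omega$ determines $\widetilde{\Lambda}_{\mu}$ on a dense subspace, hence as an operator by continuity; equivalently, its eigenpairs $(\lambda_{\omega},\nu_{\omega})$ from \eqref{diff_Lam_2} become known.

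Next, I would invoke the factorization theory \cite{2004Bal}: because $\widetilde{\Lambda}_{\mu}$ is self-adjoint and positive, the operator $\widetilde{\Lambda}_{\mu}^{1/2}$ and the range $\mathcal{R}(\widetilde{\Lambda}_{\mu}^{1/2})$ are determined by the previous step, while the probing functions $\eta_x$ of \eqref{bg_proj_1}--\eqref{prob} depend only on the known pair $(\Omega,\mu_0)$. The factorization characterization of the inclusion --- namely that $x\in D$ precisely when $\eta_x\in\mathcal{R}(\widetilde{\Lambda}_{\mu}^{1/2})$, tested through a Picard-type series in the data $(\lambda_{\omega},\nu_{\omega},\eta_x)$ --- then identifies $D$ uniquely, which is the assertion.

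The hard part will be the recovery step for $s>0$: the operator $(-\triangle_{\Gamma})^{s}$ annihilates constant functions, so the trace of $\varphi^{\omega}$ only determines $\widetilde{\Lambda}_{\mu}g_{\omega}$ modulo an additive constant, and one must argue that the missing mean components do not obstruct the determination of $D$ --- for instance by recovering them from a normalization/compatibility identity for the background problem \eqref{bg_proj_2}, or simply by carrying out the whole argument with the choice $s=0$ already used in the derivation \eqref{dsm_index_fun_1}. Everything else is routine bookkeeping together with a direct appeal to the factorization method, which I would cite rather than reprove.
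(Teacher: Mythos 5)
Your proposal is correct, but it takes a genuinely different route from the paper. You recover the operator $\widetilde{\Lambda}_{\mu}$ itself from the boundary traces of the $\varphi^{\omega}$ (trace of $\varphi^{\omega}$ gives $-(-\triangle_{\Gamma})^{s}\widetilde{\Lambda}_{\mu}g_{\omega}$, completeness of $\{g_{\omega}\}$ plus boundedness gives the whole operator), and then hand everything to the factorization method of \cite{2004Bal} as a black box: eigenpairs, range of $\widetilde{\Lambda}_{\mu}^{1/2}$, Picard criterion, done. The paper never reconstructs the operator; it stays with interior values. Using the Green's-formula identity \eqref{dsm_index_fun_1} it shows that when the $g_{\omega}$ are the eigenfunctions $\nu_{\omega}$ one has $(\eta_x,\nu_{\omega})_{\partial\Omega}=\lambda_{\omega}^{-1}\varphi^{\omega}(x)$, so the Picard series collapses to $\sum_{\omega}|\varphi^{\omega}(x)|^2/|\lambda_{\omega}|^3$ with $|\lambda_{\omega}|$ itself expressible through $\|\varphi^{\omega}\|_{L^2(\partial\Omega)}$; a general orthonormal basis is then handled by expanding the eigenfunctions in the given basis following \cite[Theorem 3.8]{2013AnagnostopoulosCharalambopoulos}. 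Your argument is shorter and avoids that change-of-basis step, and your caveat about $(-\triangle_{\Gamma})^{s}$ annihilating constants for $s>0$ is legitimate (the paper simply works with $s=0$, as you anticipate). What the paper's longer route buys is an explicit index-type series built from the pointwise interior values $\varphi^{\omega}(x)$ --- precisely the quantities fed into the CNN --- so the theorem doubles as a structural justification for the network input, a payoff your abstract operator-recovery argument does not provide.
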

\begin{proof}
According to \cite{2004Bal}, we know that $x\in D$ if and only if the corresponding probing function $\eta_x(\xi)\in\mathcal{R}(\widetilde{\Lambda}_{\mu}^{1/2})$. By the Picard's criterion, it is further equivalent to the convergence of the following sequence
\begin{equation}
\label{thm_dfs_1}
\mathcal{S}(x; \{\nu_\omega\}_{\omega=1}^{\infty} ) = \sum_{\omega=1}^{\infty} \frac{(\eta_x,\nu_{\omega})^2_{\partial\Omega}}{| \lambda_{\omega}| } < \infty.
\end{equation}
Let us first focus on $\{g_{\omega}\}_{\omega=1}^{\infty}$ being exactly the eigenfunctions $\{\nu_\omega\}_{\omega=1}^{\infty}$ in \eqref{diff_Lam_2}. Then, using 
$
\nu_{\omega} = \lambda^{-1}_{\omega} \widetilde{\Lambda}_{\mu}\nu_{\omega}
$ 
and the identity in \eqref{dsm_index_fun_1}, i.e., using the integration by parts, we have
\begin{equation}
\begin{split}
\label{thm_dfs_2}
(\eta_x, \nu_{\omega})_{\partial\Omega} &= \lambda^{-1}_{\omega} (\eta_x, \widetilde{\Lambda}_{\mu}\nu_{\omega} )_{\partial\Omega} \\
& = \lambda^{-1}_{\omega} (\eta_x, (\Lambda_{\mu} - \Lambda_{\mu_0})\nu_{\omega} )_{\partial\Omega} = \lambda^{-1}_{\omega}  \varphi^{\omega}(x)
\end{split}
\end{equation}
where $\varphi^{\omega}$ are the data functions corresponding to $g_{\omega}=\nu_{\omega}$ solved from \eqref{bg_proj_2} with $s=0$. Then, combining \eqref{thm_dfs_1} and \eqref{thm_dfs_2}, we have
\begin{equation}
\label{thm_dfs_3}
\mathcal{S}(x; \{\nu_\omega\}_{\omega=1}^{\infty} ) = \sum_{\omega=1}^{\infty} \frac{ |\varphi^{\omega}(x)|^2 }{| \lambda_{\omega}|^3 }.
\end{equation}
Since $|\lambda_{\omega}| = \|\varphi_{\omega}\|_{L^2(\partial\Omega)}/\| \nu_{\omega} \|_{L^2(\partial\Omega)}$, the inclusion distribution is determined by the convergence property of the sequence in \eqref{thm_dfs_3} which is further determined by $\varphi^{\omega}$. Next, for general orthonormal basis $\{g_{\omega}\}_{\omega=1}^{\infty}$ as the basis, following the argument in \cite[Theorem 3.8]{2013AnagnostopoulosCharalambopoulos}, we can express $\{\nu_\omega\}_{\omega=1}^{\infty}$ by the expansion of $\{g_{\omega}\}_{\omega=1}^{\infty}$ which is then used to express $\{\varphi^{\omega}\}_{\omega=1}^{\infty}$ corresponding to $\{g_{\omega}\}_{\omega=1}^{\infty}$. Plugging it in \eqref{thm_dfs_3} we have the new series to determine the inclusion shape that only depends on $\{\varphi^{\omega}\}_{\omega=1}^{\infty}$.
\end{proof}

\begin{remark}
According to the theory above, we can conclude that the whole uniqueness does not depend on the value of $\mu_0$ and $\mu$, which only requires they are distinguished from each other. Indeed, this theoretical property in some certain sense can manifest itself in the proposed DDSM since the nice reconstruction can be obtained for the different values of $\mu$ even if they are significant from those used for training.
\end{remark}

\begin{remark}
In our work for EIT \cite{2021GuoJiang}, a similar data function is generate, but the input of the fully neural network is the gradient of the Cauchy difference function. 
%but it is its gradient that is used as the input to the DNN. 
This difference is theoretically supported by the format of $\varphi$  in the generated series \eqref{thm_dfs_3}.
%has its theoretical foundation that is the format of $\varphi$ in the generated series \eqref{thm_dfs_3}.
\end{remark}

\subsection{Implementation Details}
In order to perform the convolution operator,  the discretization of $\Omega$ needs to be chosen as Cartesian grids which are natural for rectangular domain. Note that the data functions $\varphi^{\omega}$ are solved from the equations merely with  the background absorption coefficient, so there is no need to require the mesh to align with the interface, and a simple Cartesian mesh may yield satisfactory numerical solutions. As for a general shaped domain, we just need to immerse it into a rectangle such that the Cartesian grid can be generated on the whole rectangular fictitious domain. However, if the data functions $\varphi^{\omega}$ are computed on a general triangular mesh, they need to reevaluated at the generated Cartesian grid points since the values at the triangle mesh points are not able to directly support the CNN computation. To alleviate the computational burden, we can prepare these data functions before training rather than solve them during training. It should be noted that the DDSM indeed requires more memories compared with other deep learning approaches that directly input the original boundary point data, since the newly generated data functions are at least 2D.

\section{Numerical Experiments}
\label{sec:num}

%\\
In this section, we present numerical experiments to demonstrate that our newly
developed DDSMs are effective and robust for the reconstruction of inhomogeneous inclusions in the DOT. For such medical imaging problems, in general only limited data can be obtained from real clinical environments, but instead, simulation or the so-called synthetic data are cheap, easily accessible and are not subject to the objective factors. As suggested by many works in the literature \cite{2018YedderBenTaiebShokoufi,2019FanYing,2021GuoJiang,2020YooSabirHeo,2019ZhangZhang}, these features make synthetic data suitable for training DNN, and the resulting network can be further enhanced by realistic data from clinics. So we shall sample the inclusion distribution by randomly creating some basic geometric objects in $\Omega$ which are then used to generate synthetic data set for training and testing. 

\subsection{2D Problem Setting and Data Generation} 
%Let us first describe the set-up of our numerical experiments in the 2D case.
 Let the boundary (interface) of the $i$-th ($i=1,2,...,M$) basic geometric object be represented by a level-set function $\Gamma_i(x_1,x_2)$, then define the boundary of the inclusion as the zero level set of
\begin{equation}
\label{gamma_distrib}
\Gamma(x_1,x_2) = \min_{i=1,2,...,M} \{ \Gamma_i(x_1,x_2) \}.
\end{equation}
%By this formulation, these basic geometric bodies are allowed to touch and overlap with each other. 
Then the support of the inclusions is the subset $\{(x_1,x_2) ~:~ \Gamma(x_1,x_2)<0\}$. More specifically, we consider the follow two scenarios with different basic geometric objects for training data generation:

\textbf{Scenario 1}: $\Gamma_i$ are 5 random circles with the radius sampled from $\mathcal{U}(0.2,0.4)$ and the center sampled from $\mathcal{U}(-0.7,0.7)$.

\textbf{Scenario 2}: $\Gamma_i$ are 4 random ellipses with the longer axis, the eccentricity and the center points  sampled from $\mathcal{U}(0.2,0.6)$, $\mathcal{U}(0,0.9)$ and $\mathcal{U}(-0.7,0.7)$, respectively. \\
Here $\mathcal{U}(a,b)$ denotes the uniform distribution over $[a,b]$. Sampling circles or ellipses are widely used in many deep learning methods for generating synthetic data in DOT \cite{2019FanYing,2020YooSabirHeo}. However, a major difference is that the format \eqref{gamma_distrib} allows those basic inclusions to touch and overlap with each other such that the overall inclusion distribution could be much more geometrically complicated,  which makes the reconstruction more challenging. Besides, we set $\mu_0=0$ and $\mu_1=50$ for our experiments. 

For the boundary conditions, following our previous work \cite{2021GuoJiang}, we still use Fourier functions as the applied surface flux boundary data $g_{\omega}$ since they naturally form the orthogonal bases on the boundary. In particular, we pick the first $N$ modes:
\begin{equation}
\label{bc_2d}
g_{\omega}(\theta) = \cos(\omega \theta ) ~~ \omega= 1,2,...,N/2  ~~ \text{and}  ~~ g_{\omega}(\theta)=\sin( (\omega-N/2) \theta), ~~ \omega= N/2+1 ,...,N,
\end{equation}
where $\theta\in [0,2\pi)$ is the angle of $(x_1,x_2)\in\partial\Omega$, and we choose $g_{1}(\theta) = \cos( \theta )$ for the case of a single measurement, i.e., $N=1$, and $N=10,20$ for the case of multiple measurements. For each inclusion sample with every boundary condition in \eqref{bc_2d}, we apply finite difference methods on a $100\times 100$ mesh to solve the forward equation \eqref{dot_model} to obtain $u_{\omega}$ and  $f_{\omega}=u_{\omega}|_{\partial \Omega}$. Then, the data pairs $(f_{\omega},g_{\omega})_{\omega=1}^N$ are used in \eqref{eq_multiphi} to generate data functions as the input of the proposed DNN.

On the one hand, it is generally difficult to obtain the accurate knowledge of the physiological noise presented in human data. On the other hand, as discussed in Section \ref{sec:DDSM} and shown in the left plot of Figure \ref{fig: noise}, using boundary data to generate the data functions $\varphi^{\omega}$ can smooth out the noise in the sense that the interior of the data functions can be highly smooth. Both the original DSM \cite{chow2015direct,chow2014direct} and our recent work using DDSM for solving EIT \cite{2021GuoJiang} indicate that this mechanism can significantly enhance the robustness with respect to the noise. Therefore, instead of adding noise in the training set, we will add very large noise in the test data. This is intentionally to test the robustness of the proposed DNN with respect to noise that is not contained in the training set. In particular, we apply the following point-wise noise on the synthetic measured data
\begin{equation}
\label{noise_eq}
f^{\delta}_{\omega}(x) = (1 + \delta G(x)) f_{\omega},
\end{equation}
where $\delta$ is the signal-to-noise ratio chosen as $0$, $5\%$ and $10\%$, $G(x)$ are Gaussian random variables with standard norm distribution which are assume to be independently identical with respect to the points $x$. Different from the set-up in \cite{chow2015direct} that uses a spatially-invariant noise, such noise will cause very rough data on the boundary as shown in Figure \ref{fig: noise}, challenging the robustness of the reconstruction algorithms more seriously.

%which is similar to the strategy in \cite{2019LiZhouWangWangLu}

\subsection{Numerical Results for 2D}

Now we present reconstruction results for each scenario in 2D, and explore the performance of the proposed algorithm for some more challenging situtions. 

\vspace{0.1in}

\textbf{Some basic results:}
The reconstruction results for three basic cases are provided in Figures \ref{cir-4-comp} and \ref{ellipse-4-comp} for each scenario. The figures clearly show the accurate reconstruction of the proposed algorithm. In particular, for the third case of Figure \ref{ellipse-4-comp}, the true inclusion has a concave portion near the domain center away from the boundary data which is generally difficult to be captured. But the proposed algorithm can recover it quite satisfactorily.

\begin{figure}[htbp]
\begin{tabular}{ >{\centering\arraybackslash}m{0.8in}>{\centering\arraybackslash}m{0.8in} >{\centering\arraybackslash}m{0.8in}  >{\centering\arraybackslash}m{0.8in} >{\centering\arraybackslash}m{0.8in}  >{\centering\arraybackslash}m{0.8in} }
	\centering
	True coefficients &
	Single pair, $\delta=0$ &
	N = 10, $\delta=0$ &
	N = 20, $\delta=0$ &
	N = 20, $\delta=5\%$ &
	N = 20, $\delta=10\%$ \\
        \includegraphics[width=0.8in]{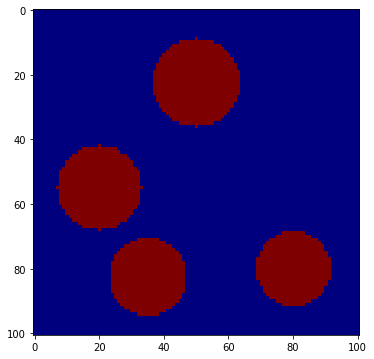}&
	\includegraphics[width=0.8in]{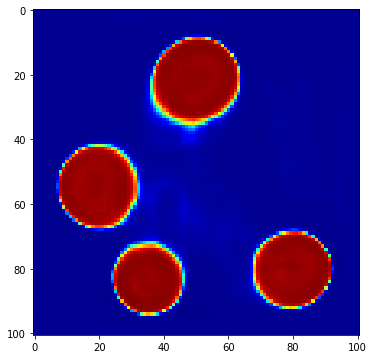}&
	\includegraphics[width=0.8in]{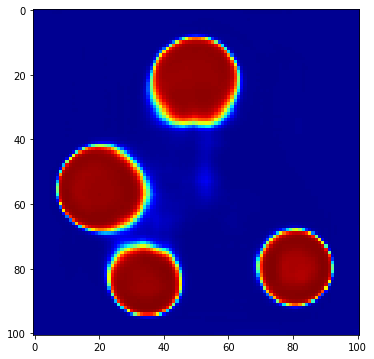}&
	\includegraphics[width=0.8in]{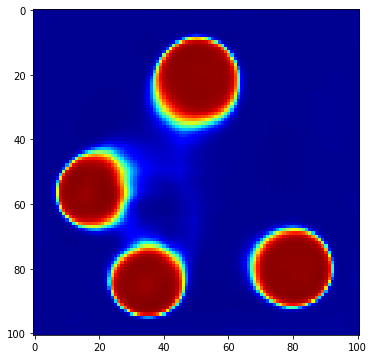}&
	\includegraphics[width=0.8in]{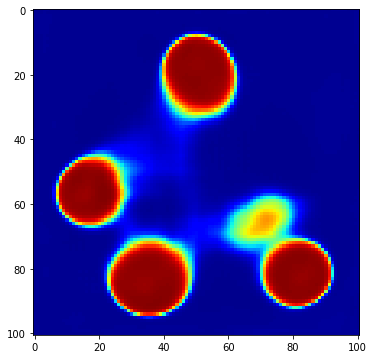}&
	\includegraphics[width=0.8in]{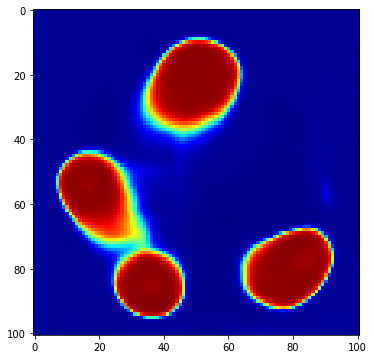}\\
	\includegraphics[width=0.8in]{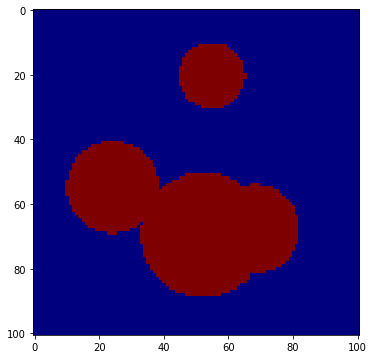}&
	\includegraphics[width=0.8in]{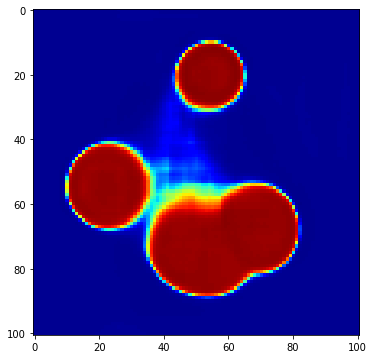}&
	\includegraphics[width=0.8in]{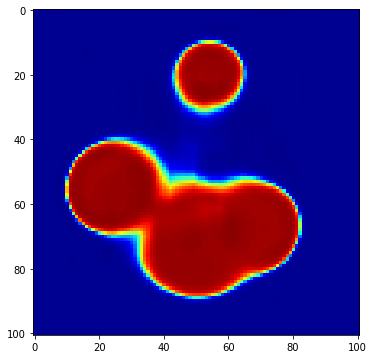}&
	\includegraphics[width=0.8in]{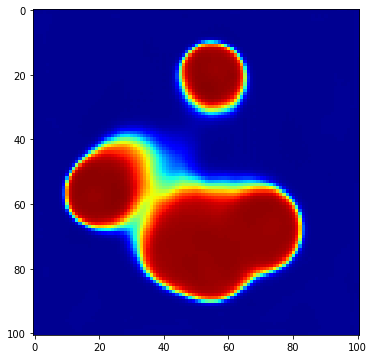}&
	\includegraphics[width=0.8in]{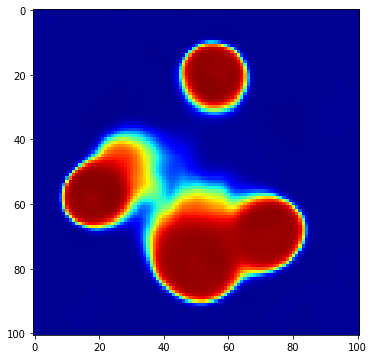}&
	\includegraphics[width=0.8in]{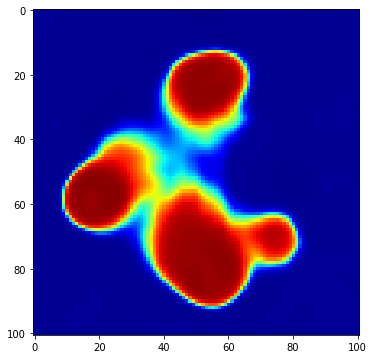}\\
	\includegraphics[width=0.8in]{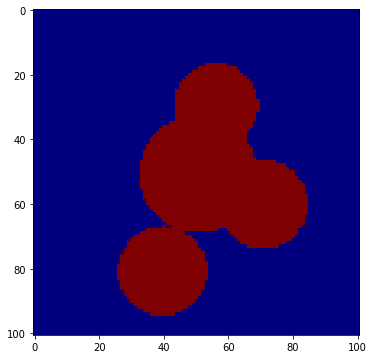}&
	\includegraphics[width=0.8in]{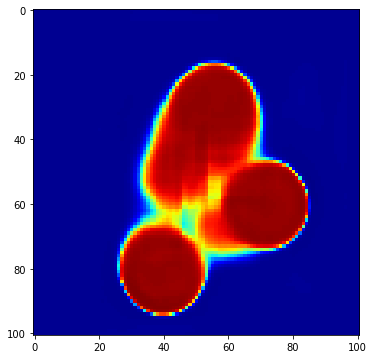}&
	\includegraphics[width=0.8in]{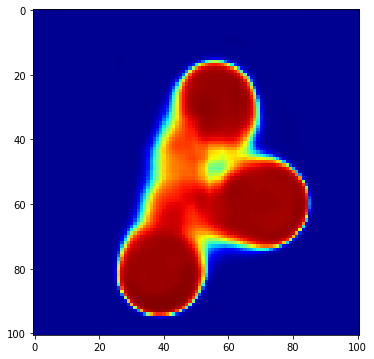}&
	\includegraphics[width=0.8in]{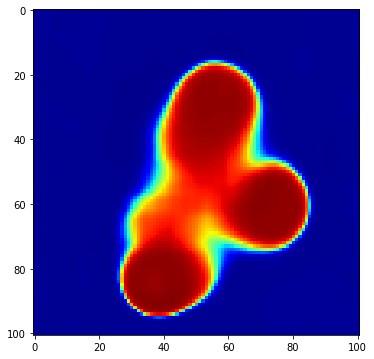}&
	\includegraphics[width=0.8in]{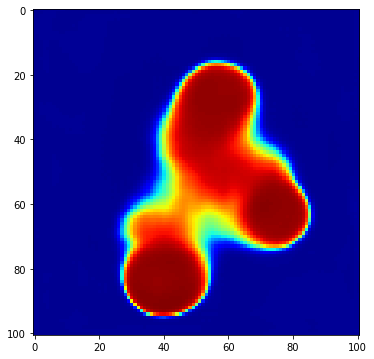}&
	\includegraphics[width=0.8in]{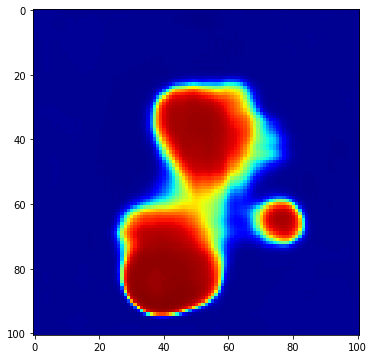}\\
\end{tabular}
\caption{Reconstruction for 3 cases in \textbf{Scenario 1} (4 circles) with different Cauchy data number and noise level: Case 1(top), Case 2(middle) and Case 3(bottom) } 
\label{cir-4-comp}
\end{figure}

\begin{figure}[ht]
\begin{tabular}{ >{\centering\arraybackslash}m{0.8in}>{\centering\arraybackslash}m{0.8in} >{\centering\arraybackslash}m{0.8in}  >{\centering\arraybackslash}m{0.8in} >{\centering\arraybackslash}m{0.8in}  >{\centering\arraybackslash}m{0.8in} }
	\centering
	True coefficients &
	Single pair, $\delta=0$ &
	N = 10, $\delta=0$ &
	N = 20, $\delta=0$ &
	N = 20, $\delta=5\%$ &
	N = 20, $\delta=10\%$ \\
	\includegraphics[width=0.8in]{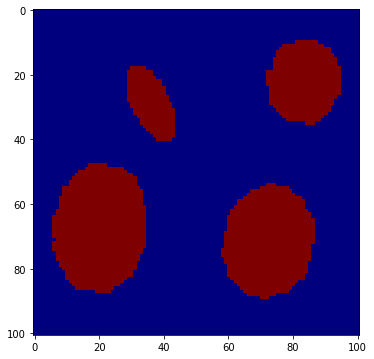}&
	\includegraphics[width=0.8in]{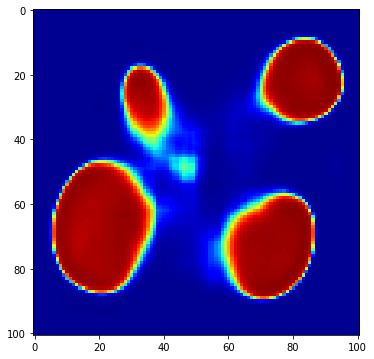}&
	\includegraphics[width=0.8in]{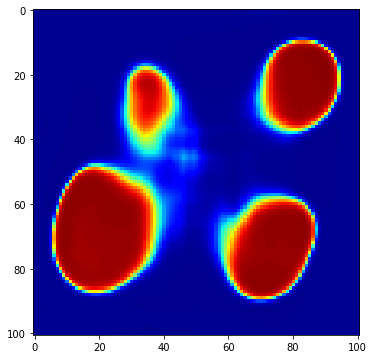}&
	\includegraphics[width=0.8in]{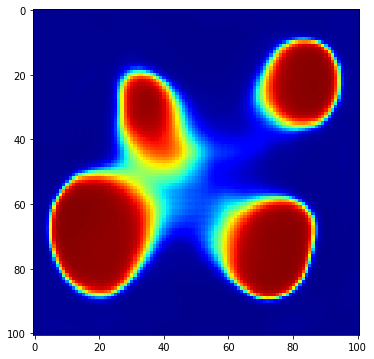}&
	\includegraphics[width=0.8in]{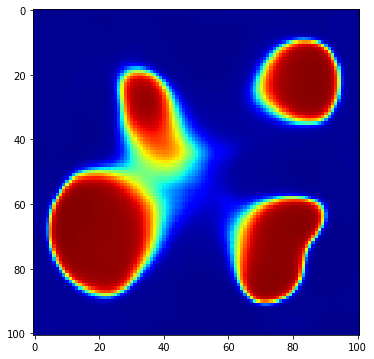}&
	\includegraphics[width=0.8in]{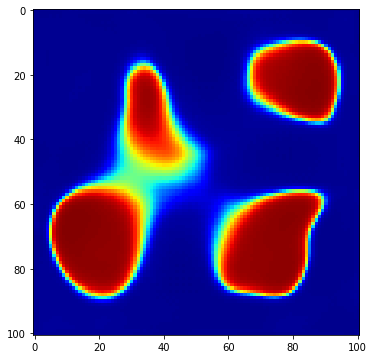}\\
	\includegraphics[width=0.8in]{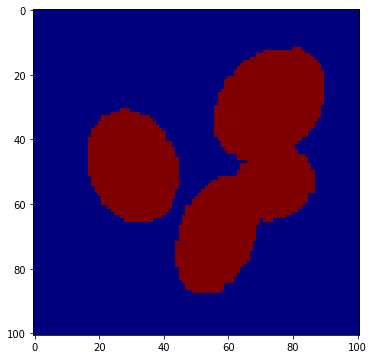}&
	\includegraphics[width=0.8in]{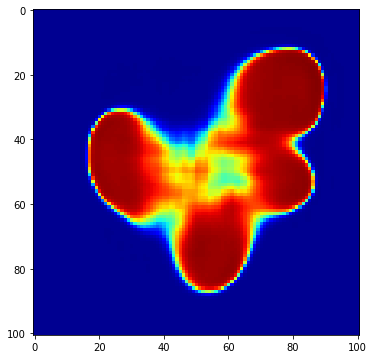}&
	\includegraphics[width=0.8in]{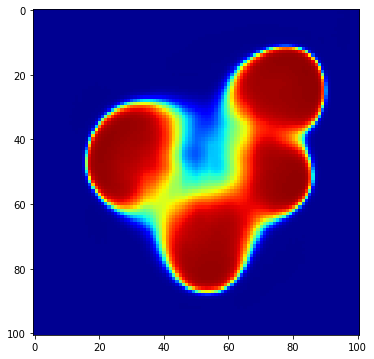}&
	\includegraphics[width=0.8in]{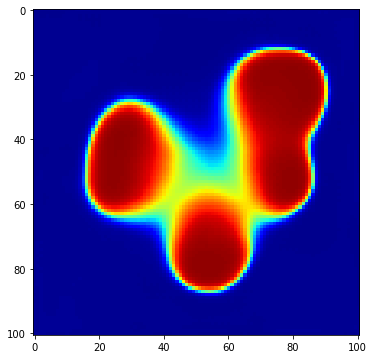}&
	\includegraphics[width=0.8in]{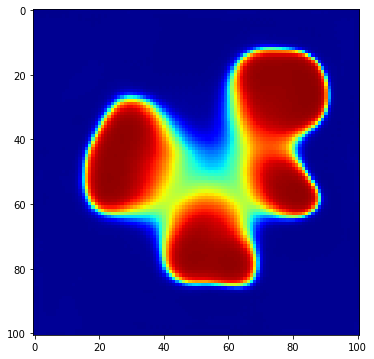}&
	\includegraphics[width=0.8in]{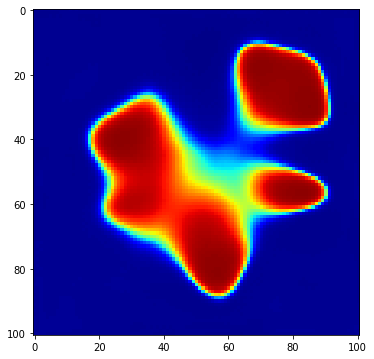}\\
	\includegraphics[width=0.8in]{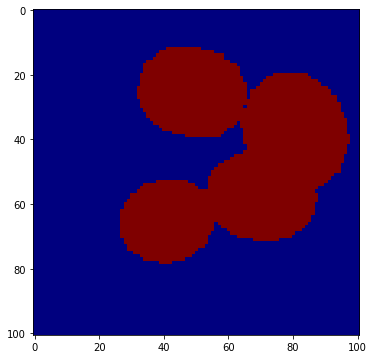}&
	\includegraphics[width=0.8in]{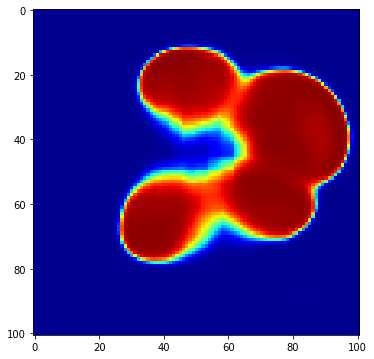}&
	\includegraphics[width=0.8in]{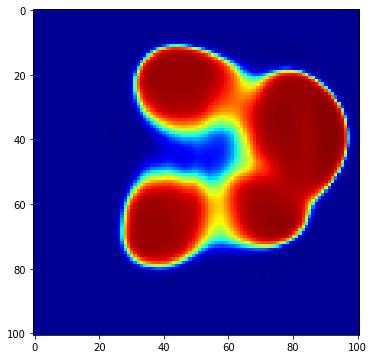}&
	\includegraphics[width=0.8in]{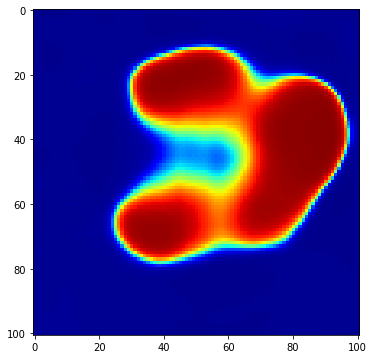}&
	\includegraphics[width=0.8in]{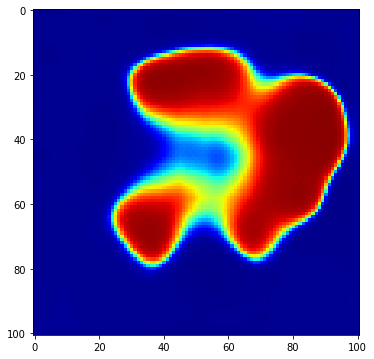}&
	\includegraphics[width=0.8in]{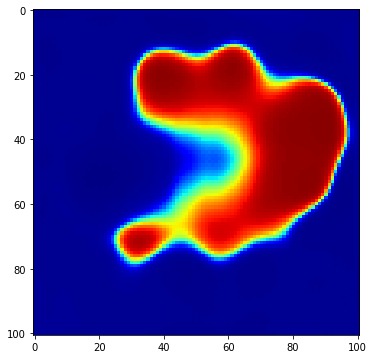}\\
\end{tabular}
\caption{Reconstruction for 3 cases in \textbf{Scenario 2} (4 ellipses) with different Cauchy data number and noise level: Case 1(top), Case 2(middle) and Case 3(bottom) } 
\label{ellipse-4-comp}
\end{figure}

We observe that the proposed algorithm using single or multiple measurements almost gives comparably good reconstruction. However, our further numerical results suggest that using multiple measurements can significantly enhance the robustness of the reconstruction with respect to noise. To demonstrate this, for the first case in Scenario 1 (the top one in Figure \ref{cir-4-comp}), we present the reconstruction of the single pair and 10 pairs with noise in Figure \ref{cir-4-comp_noise_N1}. It is observed that even $5\%$ noise can totally destroy the reconstruction for the single measurement case while the performance of 10 pairs of measurements is much better but still worse than 20 pairs. It is also worth mentioning that, even for the single measurement, the reconstruction is still highly robust with respect to the spatially-invariant noise used in \cite{2015ChowItoLiuZou}, i.e., $G$ is independent of $x$ in \eqref{noise_eq}, of which the results are omitted here.

\begin{figure}[htbp]
\centering
\begin{tabular}{>{\centering\arraybackslash}m{0.8in} >{\centering\arraybackslash}m{0.8in}  >{\centering\arraybackslash}m{0.8in} >{\centering\arraybackslash}m{0.8in}  >{\centering\arraybackslash}m{0.8in} }
	\centering
	  &
	 $\delta=0.5\%$ &
	 $\delta=1\%$ &
	 $\delta=2\%$ &
	 $\delta=5\%$ \\
	Single pair&
	 \includegraphics[width=0.8in]{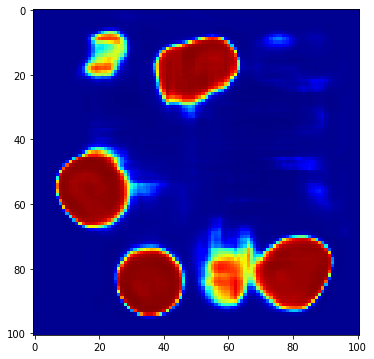}&
	\includegraphics[width=0.8in]{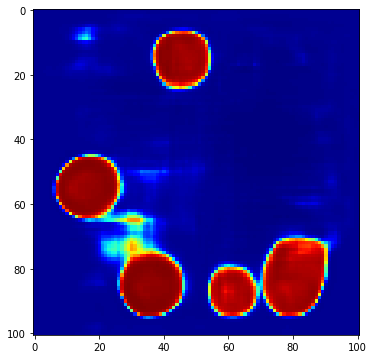}&
	\includegraphics[width=0.8in]{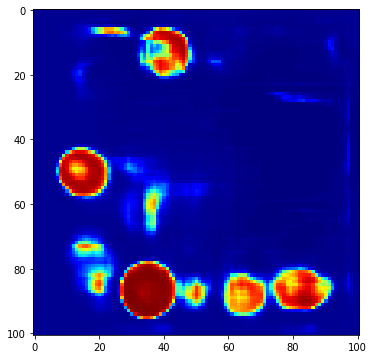}&
        \includegraphics[width=0.8in]{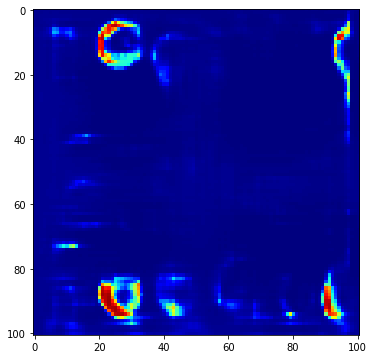}\\
	$N=10$ &
	\includegraphics[width=0.8in]{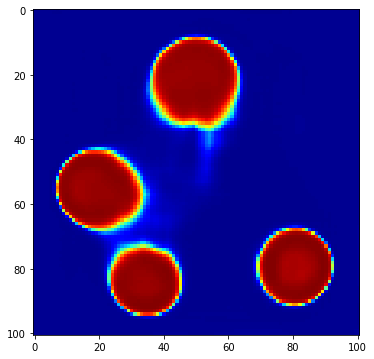}&
	\includegraphics[width=0.8in]{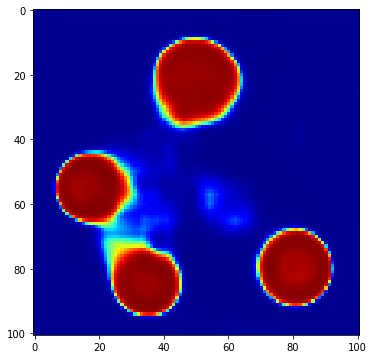}&
	\includegraphics[width=0.8in]{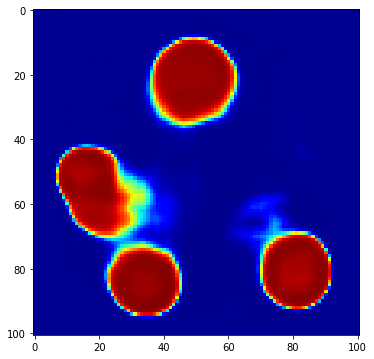}&
	\includegraphics[width=0.8in]{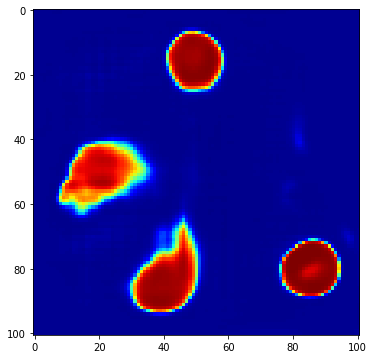}\\
\end{tabular}
\caption{Comparison of reconstruction with respect to noise for the single measurement and 10 pairs of measurements.} 
\label{cir-4-comp_noise_N1}
\end{figure}

%\begin{figure}[ht]
%	\centering
%	\includegraphics{true-cir-04.png}
%	\caption{True circle}
%	\label{fig:circle-04}
%\end{figure}
%
%\begin{figure}[htbp]
%\centering
%\subfigure[pic1.]{
%\includegraphics[width=5.5cm]{./true-cir-04.png}
%%\caption{fig1}
%}
%\quad
%\subfigure[pic2.]{
%\includegraphics[width=5.5cm]{./test-cir-1pair-noise-0perc-relu-mse-sgd-04.png}
%}
%\quad
%\subfigure[pic3.]{
%\includegraphics[width=5.5cm]{./test-cir-10pair-noise-0perc-relu-mse-sgd-04.png}
%}
%\quad
%\subfigure[pic4.]{
%\includegraphics[width=5.5cm]{./test-cir-20pair-noise-0perc-relu-mse-sgd-04.png}
%}
%\caption{ pics}
%\end{figure}

\vspace{0.1in}

\textbf{Sensitivity to Data:}
The DOT is well-known to be highly ill-posed which means the inclusions are very insensitive to the boundary data. To examine this phenomenon and test the sensitivity of the algorithm with respect to the data, we consider two different inclusion distributions in Figure \ref{fig:cir-5-comp4-4},  where the only difference is the center inclusion. We plot the corresponding $f_{\omega}$ by fixing the applied flux $g_{\omega}$, $\omega=1,2,...,10$, which are indeed quite close to each other. Namely, the center inclusion is extremely insensitive to noise, which makes its reconstruction very difficult. The results in Figure \ref{fig:cir-5-comp4-4} show that the center inclusion can still  be clearly captured. We highlight that for 20 pairs of measurements the center inclusion is captured even with $5\%$ noise which is even larger than the relative difference of their boundary data. These results demonstrate that the proposed algorithm can dig out the small difference buried in boundary data for various inclusion distribution but still keep the robustness with respect to the noise.

 %the sensitivity of the proposed algorithm with respect to data and but still  robustness to noise.

\begin{figure}[htb]
	\centering
	\includegraphics[width=.32\linewidth]{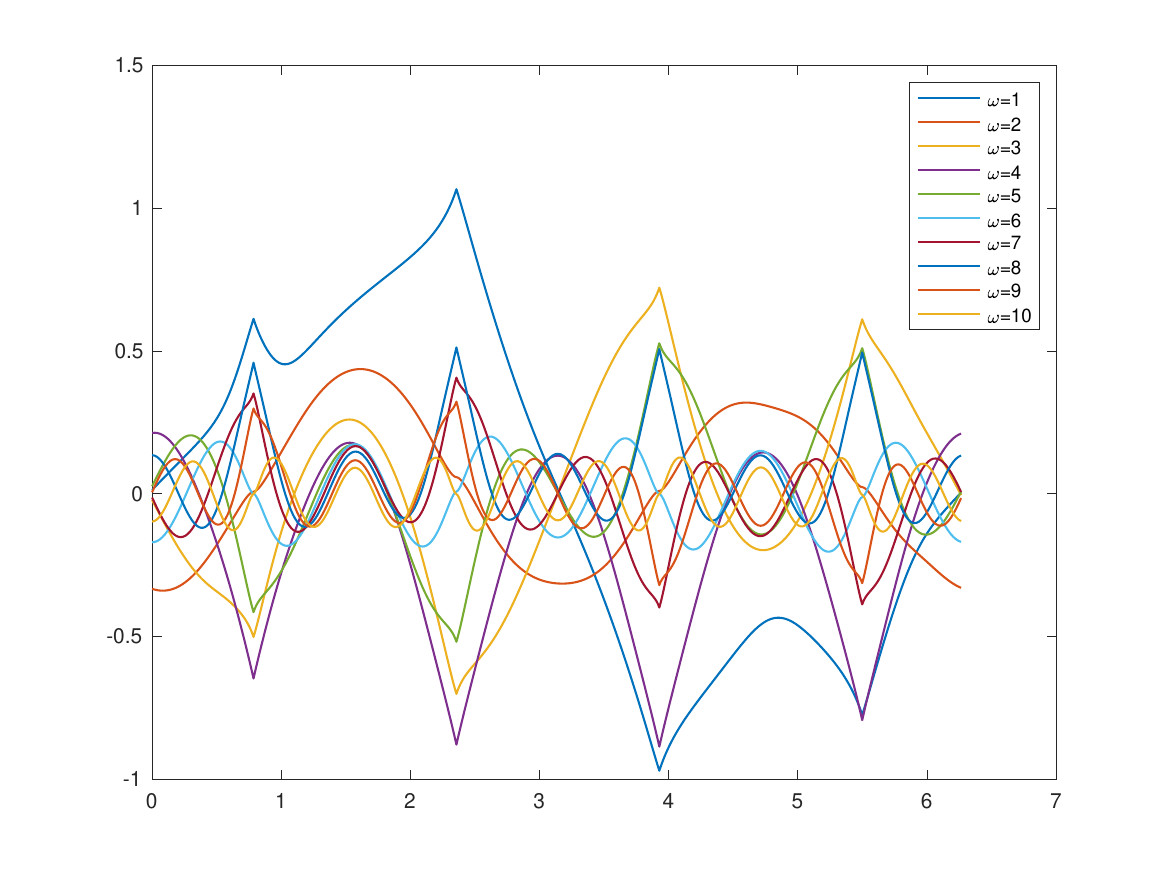}
	\includegraphics[width=.32\linewidth]{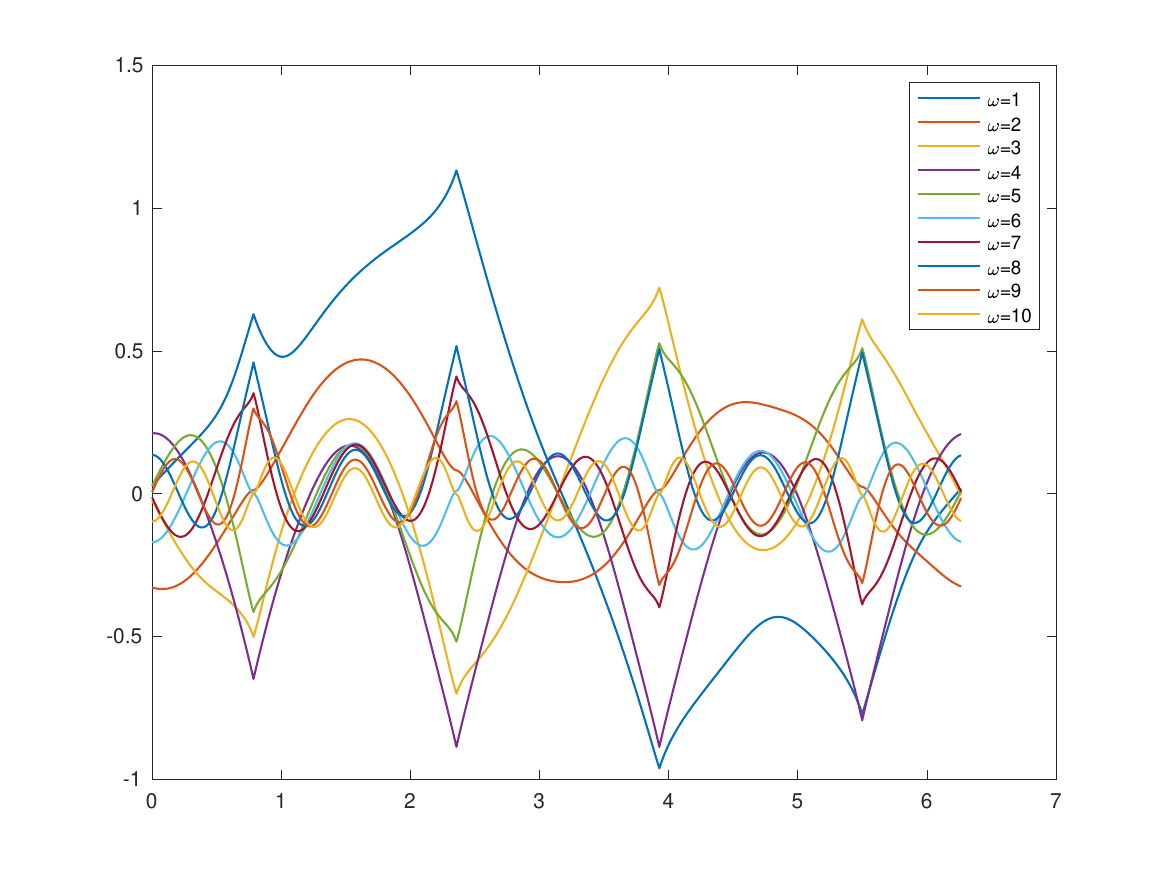}
	\includegraphics[width=.32\linewidth]{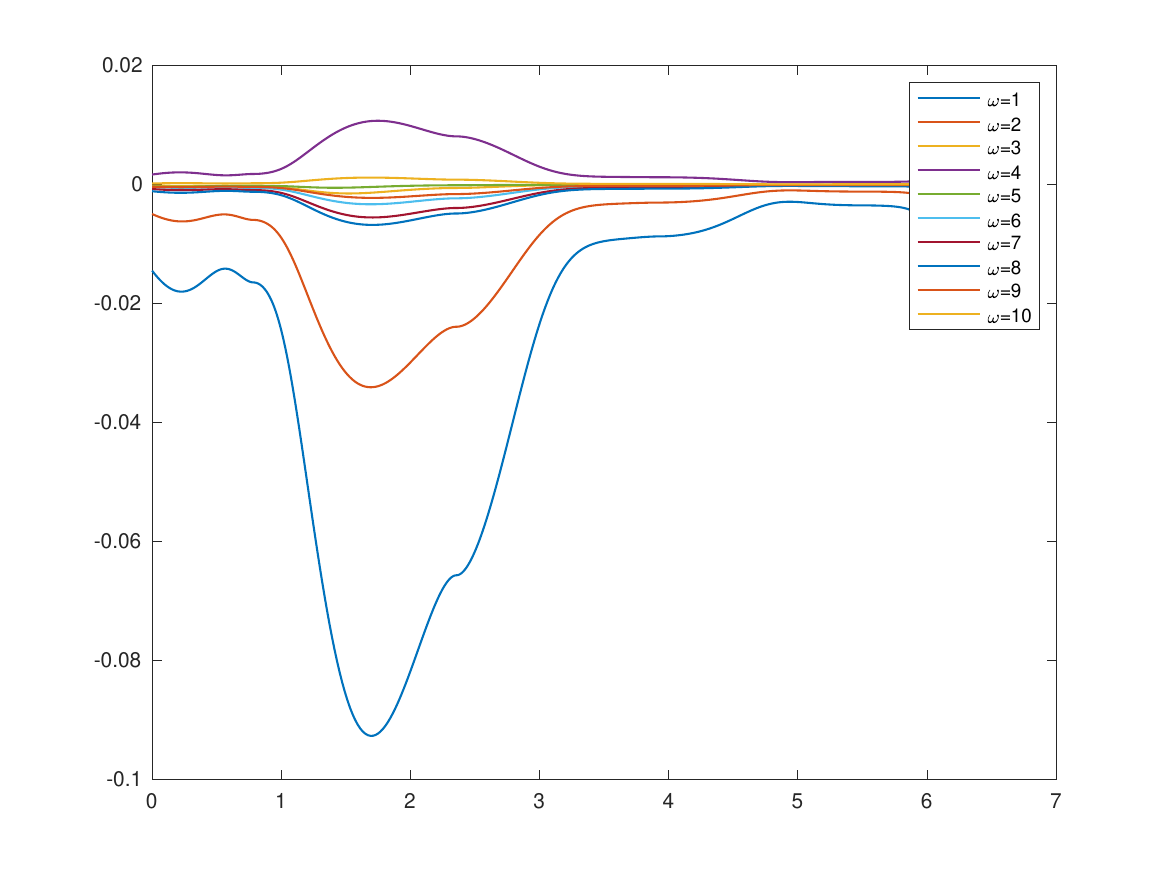}
	\caption{Plots of $g_{1,\omega}|_{\partial\Omega}, g_{2,\omega}|_{\partial\Omega}$ and $g_{1,\omega}|_{\partial\Omega} - g_{2,\omega}|_{\partial\Omega}$ versus the polar angle $\theta$ (of points on the boundary) where $g_{1,\omega}$ and $g_{2,\omega}$ correspond to inclusion distribution with and without the center inclusion respectively.}
	\label{fig:cir-5-diff-4}
\end{figure}

%\begin{figure}[htb]
%	\centering
%	\includegraphics[width=.32\linewidth]{f_mat_diff_4ellip_10pair_ex}
%	\includegraphics[width=.32\linewidth]{f_mat_diff_3ellip_10pair_ex}
%	\includegraphics[width=.32\linewidth]{f_mat_diff_4ellip_3ellip_10pair_ex}
%	\caption{Plots of $u^e_{1,\omega}|_{\partial\Omega}, u^e_{2,\omega}|_{\partial\Omega}$ and $u^e_{1,\omega}|_{\partial\Omega} - u^e_{2,\omega}|_{\partial\Omega}$ versus the polar angle $\theta$ (of points on the boundary)}
%	\label{fig:ellip-4-diff-3}
%\end{figure}

\begin{figure}[htbp]
\begin{tabular}{ >{\centering\arraybackslash}m{0.8in}>{\centering\arraybackslash}m{0.8in} >{\centering\arraybackslash}m{0.8in}  >{\centering\arraybackslash}m{0.8in} >{\centering\arraybackslash}m{0.8in}  >{\centering\arraybackslash}m{0.8in} }
	\centering
	True coefficients &
	Single pair, $\delta=0$ &
	N = 10, $\delta=0$ &
	N = 20, $\delta=0$ &
	N = 20, $\delta=5\%$ &
	N = 20, $\delta=10\%$ \\
	\includegraphics[width=0.8in]{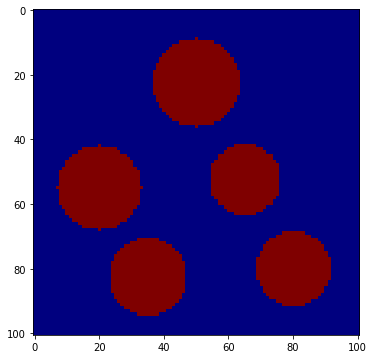}&
	\includegraphics[width=0.8in]{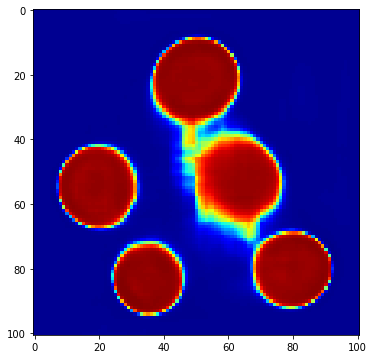}&
	\includegraphics[width=0.8in]{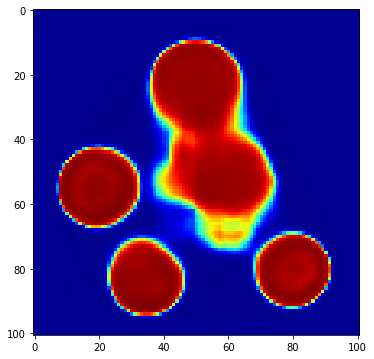}&
	\includegraphics[width=0.8in]{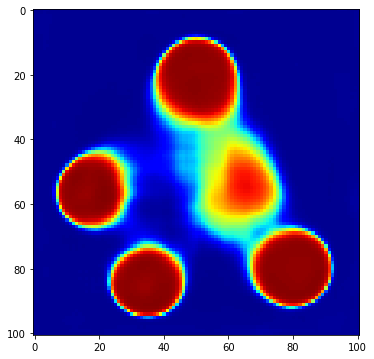}&
	\includegraphics[width=0.8in]{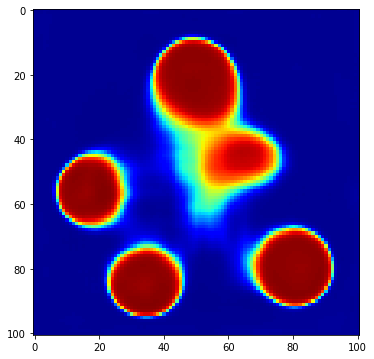}&
	\includegraphics[width=0.8in]{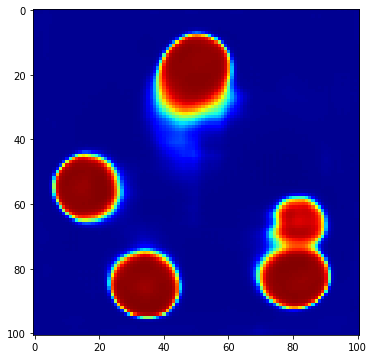}\\
	\includegraphics[width=0.8in]{true-4cir-comp4-01}&
	\includegraphics[width=0.8in]{test-4cir-1pair-noise-0perc-relu-mse-sgd-comp4-01}&
	\includegraphics[width=0.8in]{test-4cir-10pair-noise-0perc-relu-mse-sgd-comp4-01}&
	\includegraphics[width=0.8in]{test-4cir-20pair-noise-0perc-relu-mse-sgd-comp4-01}&
	\includegraphics[width=0.8in]{test-4cir-20pair-noise-5perc-relu-mse-sgd-comp4-01-real}&
	\includegraphics[width=0.8in]{test-4cir-20pair-noise-10perc-relu-mse-sgd-comp4-01-real}\\
\end{tabular}
\caption{Comparison of reconstruction for two close inclusion distribution.} 
\label{fig:cir-5-comp4-4}
\end{figure}

\vspace{0.1in}

\textbf{Out-of-scope prediction:}
We note that the basic geometric shape such as circles or ellipses may be more appropriate to be chosen to imitate the tumors for the clinical application of the DDSM. 
%may be chosen as those more close to tumor shapes to apply the DDSM for clinical environment such that better reconstruction may be obtained. 
But even so, it may not be expected that the shape to be reconstructed is always covered by or close to the training data set, which triggers us to further investigate the capability of the DDSM for the inclusion distribution that has the geometry out of the scope of the training data, that is, they can not be generated by those basic geometry objects. Here we present the reconstruction for three different shapes: a triangle, two bars and an annulus in Figure \ref{others-comp}. In particular, the annulus has a hollow center which is difficult to be captured by the lights sourced at the boundary. But we can still observe the satisfactory reconstruction of their basic geometric properties for all these inclusions. Similar to the previous results, the reconstruction by 20 parts is still robust with respect to the large noise. Note that one can add these shapes to the training data set if some more accurate reconstruction is needed.

\begin{figure}[htbp]
\begin{tabular}{ >{\centering\arraybackslash}m{0.8in}>{\centering\arraybackslash}m{0.8in} >{\centering\arraybackslash}m{0.8in}  >{\centering\arraybackslash}m{0.8in} >{\centering\arraybackslash}m{0.8in}  >{\centering\arraybackslash}m{0.8in} }
	\centering
	True coefficients &
	N = 1, $\delta=0$ &
	N = 10, $\delta=0$ &
	N = 20, $\delta=0$ &
	N = 20, $\delta=5\%$ &
	N = 20, $\delta=10\%$ \\
	\includegraphics[width=0.8in]{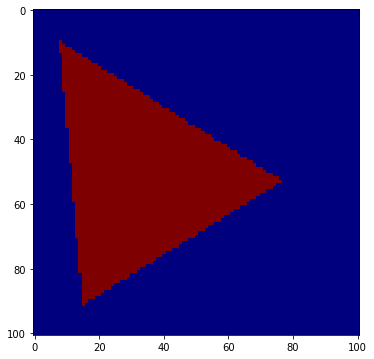}&
	\includegraphics[width=0.8in]{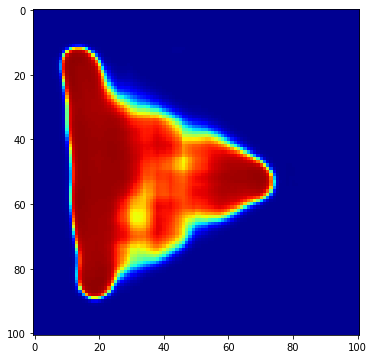}&
	\includegraphics[width=0.8in]{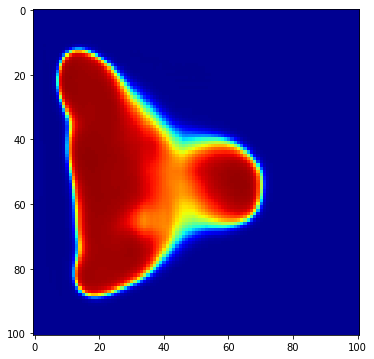}&
	\includegraphics[width=0.8in]{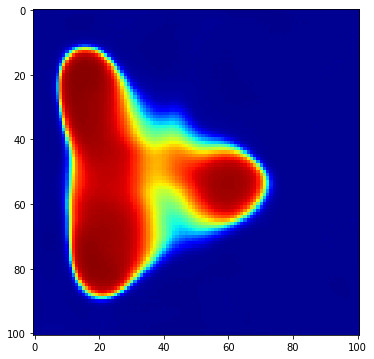}&
	\includegraphics[width=0.8in]{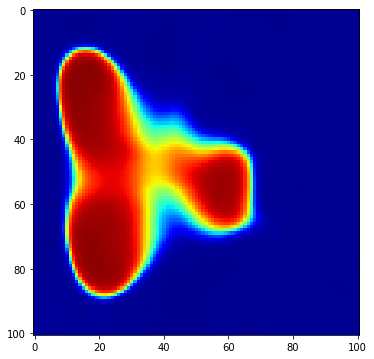}&
	\includegraphics[width=0.8in]{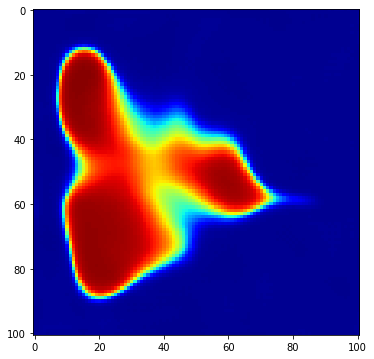}\\
	\includegraphics[width=0.8in]{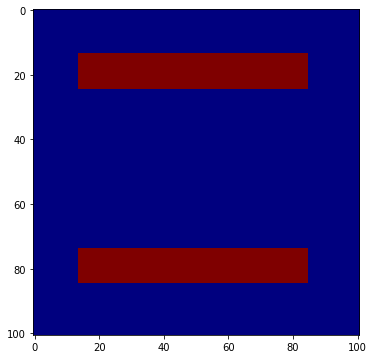}&
	\includegraphics[width=0.8in]{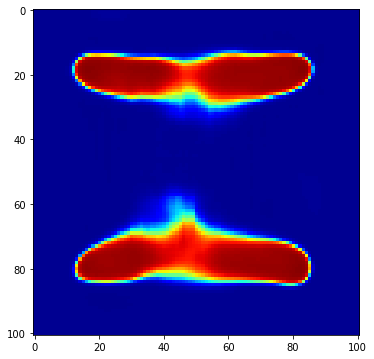}&
	\includegraphics[width=0.8in]{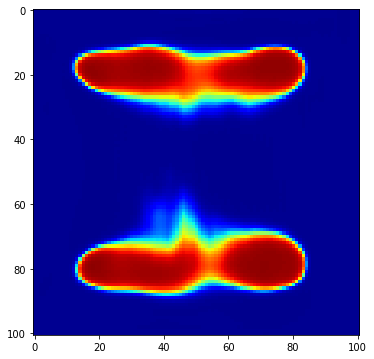}&
	\includegraphics[width=0.8in]{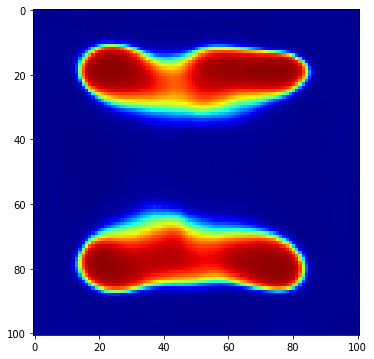}&
	\includegraphics[width=0.8in]{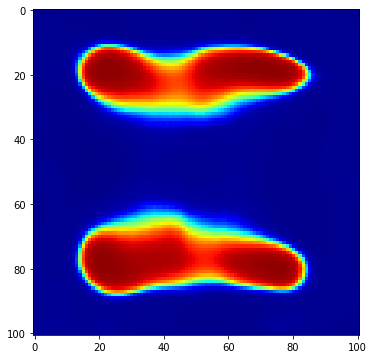}&
	\includegraphics[width=0.8in]{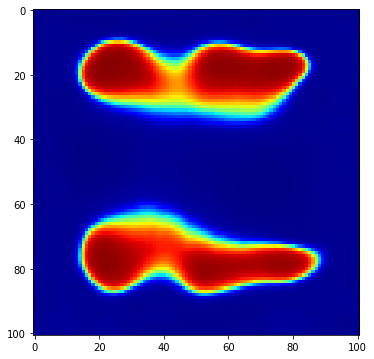}\\
	\includegraphics[width=0.8in]{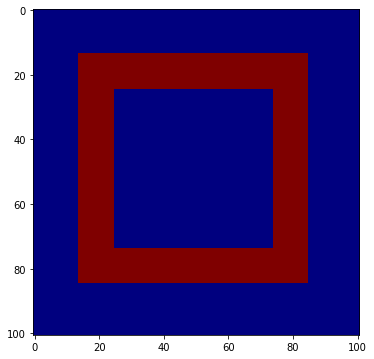}&
	\includegraphics[width=0.8in]{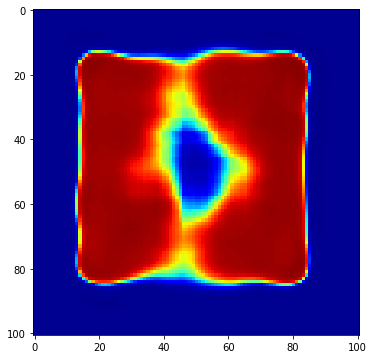}&
	\includegraphics[width=0.8in]{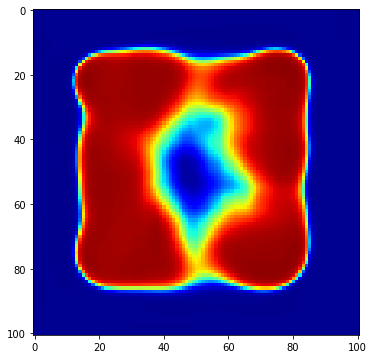}&
	\includegraphics[width=0.8in]{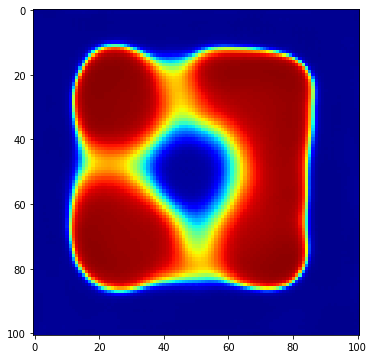}&
	\includegraphics[width=0.8in]{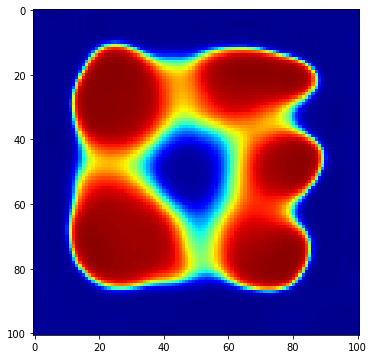}&
	\includegraphics[width=0.8in]{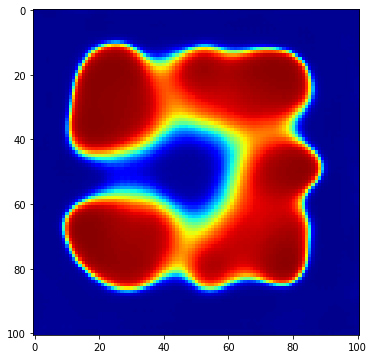}\\
\end{tabular}
\caption{CNN-DDSM reconstruction for 3 special inclusion shapes: one triangle (top), two long rectangular bars (middle) and a rectangular ring (bottom) } 
\label{others-comp}
\end{figure}

\vspace{0.1in}

\textbf{Limited data points:}
Note that all the previous results are generated by assuming that the data are available at every boundary grid point for using finite difference methods to generate data functions $\varphi^{\omega}$, which may not be practical. Indeed, according to DOT experiments, see \cite{2020RenJiangCostanzoKalyanov} for example, even if a camera may receive the light at every point, the light sources can be placed on only a few points on the boundary. So we shall explore this issue by limiting the data points where the Neumann data $g_{\omega}$ are available. In particular, we consider the case that there are $4L$, $L=8,16$, data points on the boundary with $L$ points equally distributed on each side of the domain. The data $g_{\omega}$ are assumed to be obtained at these points which are then linearly interpolated to generate functions on the boundary. These functions are then used to generate the corresponding Dirichlet data functions $f_{\omega}$ available at all the boundary mesh points. Then the interpolated Neumann data and the simulated Dirichlet date are used to generate data functions $\varphi^{\omega}$ as the input to the same DNN used above, and the reconstruction results are presented in Figure \ref{fig:4ellip-1-limited-1} for the first case in Figure \ref{ellipse-4-comp}. As we can see, the three relatively larger ellipses are reconstructed quite satisfactory even with $10\%$ noise. However, the result for the small ellipse is not as good as the others. We guess it may be due to its small size that receives too little light for passing its geometric information to the boundary. Despite its inaccurate shape, the algorithm still tells us that an inclusion exists around the upper-left corner.

\begin{figure}[htb]
\centering
%\begin{tabular}{ >{\centering\arraybackslash}m{0.2in} >{\centering\arraybackslash}m{0.8in} >{\centering\arraybackslash}m{0.8in}  >{\centering\arraybackslash}m{0.8in}  >{\centering\arraybackslash}m{0.8in}  >{\centering\arraybackslash}m{0.8in} }
%	\centering
%	& True coefficients & & \\
%	& \includegraphics[width=0.8in]{true-ellipse-60} & N = 10 & \\
%	& $ 36\;\;pints $ & $40\;\;points$ & $44\;\;points$ \\
%	\rotatebox{90}{$\delta=0\%$}&
%	\includegraphics[width=0.8in]{test-4ellip-10pair-noise-0perc-relu-mse-sgd-comp4-01-limited36}&
%	\includegraphics[width=0.8in]{test-4ellip-10pair-noise-0perc-relu-mse-sgd-comp4-01-limited40}&
%	\includegraphics[width=0.8in]{test-4ellip-10pair-noise-0perc-relu-mse-sgd-comp4-01-limited44}\\
%\end{tabular}
%%%%%%%%%%%%%
\begin{tabular}{ >{\centering\arraybackslash}m{0.8in}   >{\centering\arraybackslash}m{0.2in} >{\centering\arraybackslash}m{0.8in} >{\centering\arraybackslash}m{0.8in}  >{\centering\arraybackslash}m{0.8in}  >{\centering\arraybackslash}m{0.8in}  >{\centering\arraybackslash}m{0.8in} }
	\centering
	 True coefficients & & $ \delta=0\% $ & $\delta=5\%$ & $\delta=10\%$ \\
	 \includegraphics[width=0.8in]{true-ellipse-60} &
	\rotatebox{90}{$32\;\;pints$}&
	\includegraphics[width=0.8in]{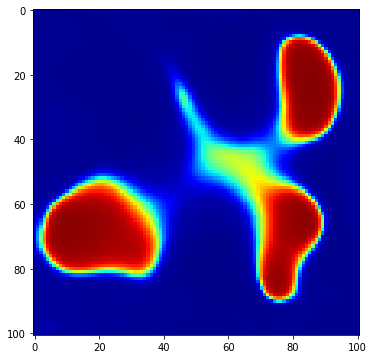}&
	\includegraphics[width=0.8in]{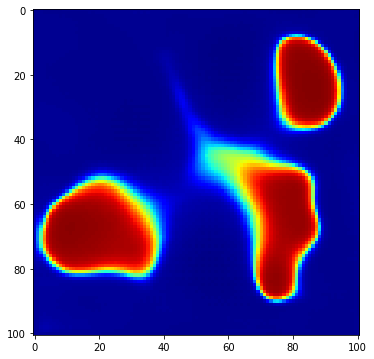}&
	\includegraphics[width=0.8in]{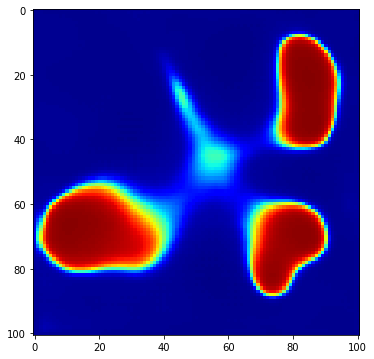}\\
	&
	\rotatebox{90}{$48\;\;pints$}&
	\includegraphics[width=0.8in]{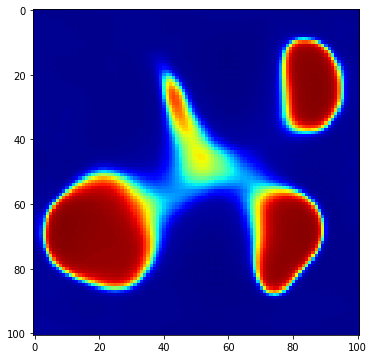}&
	\includegraphics[width=0.8in]{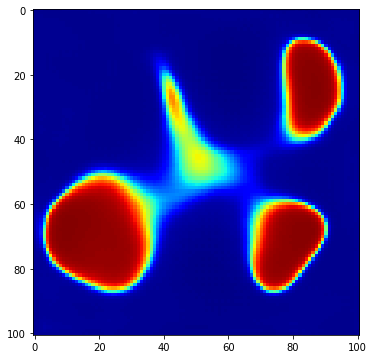}&
	\includegraphics[width=0.8in]{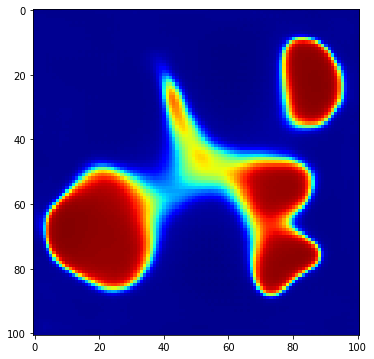}\\
%	&
%	\rotatebox{90}{$\delta=10\%$}&
%	\includegraphics[width=0.8in]{test-4ellip-20pair-noise-10perc-relu-mse-sgd-comp4-01-limited20}&
%	\includegraphics[width=0.8in]{test-4ellip-20pair-noise-10perc-relu-mse-sgd-comp4-01-limited32}&
%	\includegraphics[width=0.8in]{test-4ellip-20pair-noise-10perc-relu-mse-sgd-comp4-01-limited40}\\
	%%
\end{tabular}
\caption{DDSM reconstruction for random ellipses with limited data points and $N=20$.}
\label{fig:4ellip-1-limited-1}
\end{figure}

%CNN-DDSM (the bottom three rows) reconstruction for random ellipses: one ellipse is located closed to the center of domain and blocked from the boundary by other 3 ellipses (left) and this center ellipse is removed (right).

\vspace{0.1in}

\textbf{Reconstruction for different $\mu$:}
Now we demonstrate that the proposed algorithm can be used to obtain reasonable reconstruction even though the true absorption coefficient values are much different from those used to generate training data. Here we also use the first case in Figure \ref{ellipse-4-comp} of the scenario 2 as an example. But we generate the boundary data with two different groups of absorption coefficient values $(\mu_0,\mu_1)=(0,200)$ and $(1,100)$ where the background and inclusion coefficient values may both vary. The same DNN is used predict the inclusion geometry of which the results are presented in Figure \ref{fig:4ellip-diff-mu}. Although the reconstruction is not as good as the one shown in Figure \ref{ellipse-4-comp}, we can see that all the four ellipses are clearly captured and the reconstruction is still stable with respect to noise. We highlight that this feature is very important for the practical clinical situations as the material property of patients' tumors may vary and may not be known accurately. In spite of this, the proposed algorithm still has the potential to detect the appearance and shapes of the tumors to certain extend.

\begin{figure}[htb]
\begin{tabular}{ >{\centering\arraybackslash}m{0.1in} >{\centering\arraybackslash}m{0.77in} >{\centering\arraybackslash}m{0.77in}  >{\centering\arraybackslash}m{0.77in}  >{\centering\arraybackslash}m{0.77in}  >{\centering\arraybackslash}m{0.77in} }
	\centering
	& $\delta = 0\%$ & $\delta = 2\%$ & $\delta = 5\%$ \\
	\rotatebox{90}{$N=10$}&
	\includegraphics[width=0.77in]{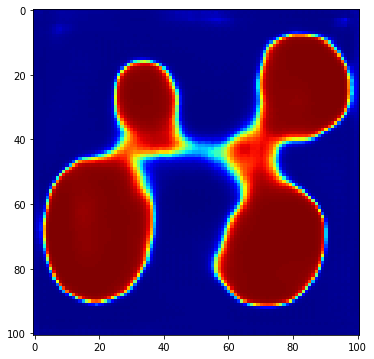}&
	\includegraphics[width=0.77in]{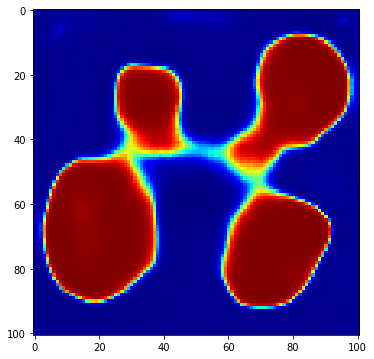}&
	\includegraphics[width=0.77in]{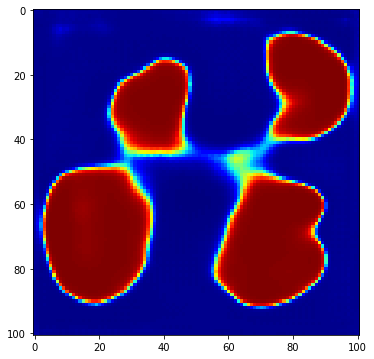}\\
	\rotatebox{90}{$N=20$}&
	\includegraphics[width=0.77in]{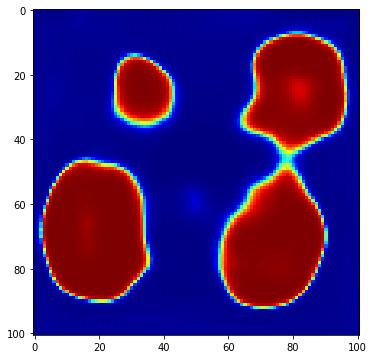}&
%	&
	\includegraphics[width=0.77in]{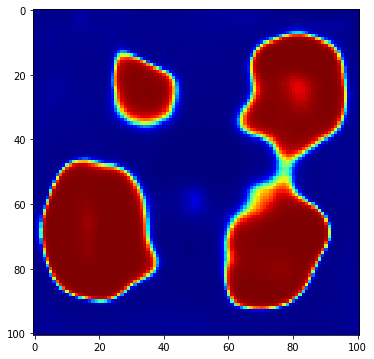}&
	\includegraphics[width=0.77in]{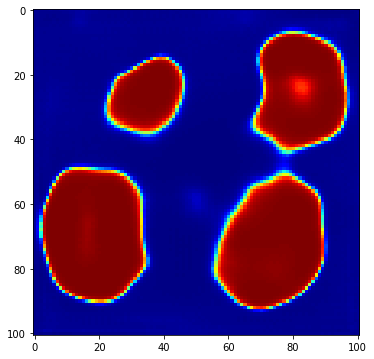}\\
%	& $\delta = 0\%$ & $\delta = 2\%$ & $\delta = 5\%$ \\
%	\rotatebox{90}{$N=10$}&
%	\includegraphics[width=0.8in]{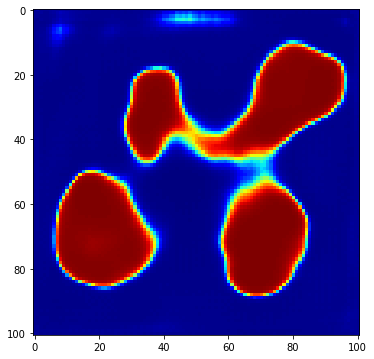}&
%%	&
%	\includegraphics[width=0.8in]{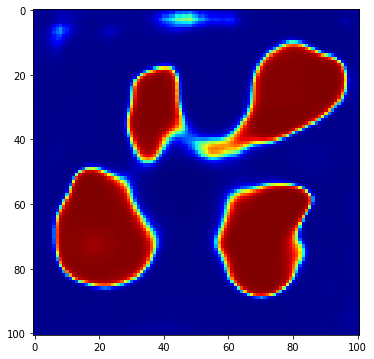}&
%	\includegraphics[width=0.8in]{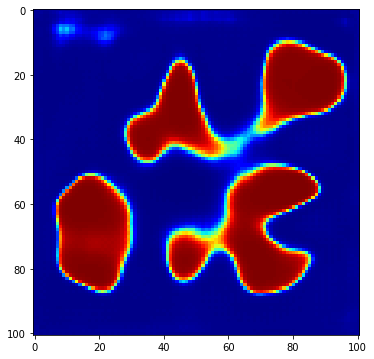}\\
%	\rotatebox{90}{$N=20$}&
%	\includegraphics[width=0.8in]{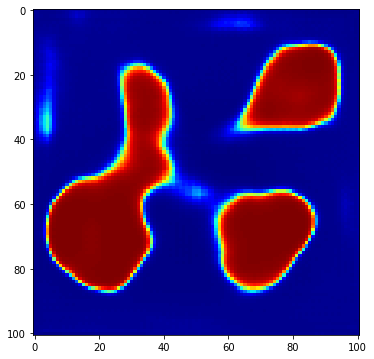}&
%%	&
%	\includegraphics[width=0.8in]{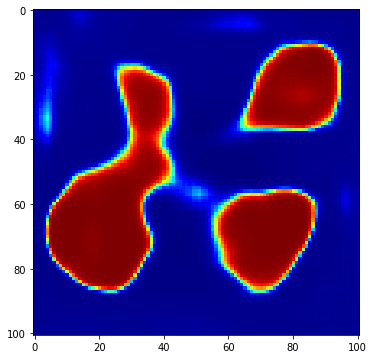}&
%%	&
%	\includegraphics[width=0.8in]{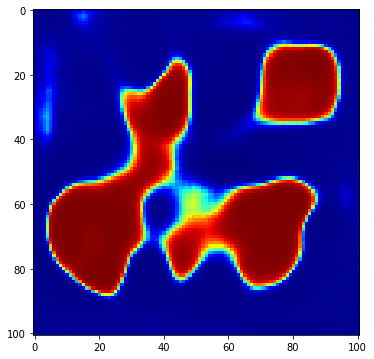}\\
\end{tabular}
%%%%%%%%%%%%%
\begin{tabular}{ >{\centering\arraybackslash}m{0.77in} >{\centering\arraybackslash}m{0.77in} >{\centering\arraybackslash}m{0.77in}  >{\centering\arraybackslash}m{0.77in}  >{\centering\arraybackslash}m{0.77in}  >{\centering\arraybackslash}m{0.77in} }
	\centering
	 $\delta = 0\%$ & $\delta = 2\%$ & $\delta = 5\%$ \\
	\includegraphics[width=0.77in]{test-4ellip-10pair-noise-0perc-mu-1-100-relu-mse-sgd-01-real-fix}&
%	&
	\includegraphics[width=0.77in]{test-4ellip-10pair-noise-2perc-mu-1-100-relu-mse-sgd-01-real-fix}&
	\includegraphics[width=0.77in]{test-4ellip-10pair-noise-5perc-mu-1-100-relu-mse-sgd-01-real-fix}\\
	\includegraphics[width=0.77in]{test-4ellip-20pair-noise-0perc-mu-1-100-relu-mse-sgd-01-real-fix}&
%	&
	\includegraphics[width=0.77in]{test-4ellip-20pair-noise-2perc-mu-1-100-relu-mse-sgd-01-real-fix}&
%	&
	\includegraphics[width=0.77in]{test-4ellip-20pair-noise-5perc-mu-1-100-relu-mse-sgd-01-real-fix}\\
%	$\delta = 0\%$ & $\delta = 2\%$ & $\delta = 5\%$ \\
%	\includegraphics[width=0.8in]{test-4ellip-10pair-noise-0perc-mu-1-100-relu-mse-sgd-03-real-fix}&
%%	&
%	\includegraphics[width=0.8in]{test-4ellip-10pair-noise-2perc-mu-1-100-relu-mse-sgd-03-real-fix}&
%	\includegraphics[width=0.8in]{test-4ellip-10pair-noise-5perc-mu-1-100-relu-mse-sgd-03-real-fix}\\
%	\includegraphics[width=0.8in]{test-4ellip-20pair-noise-0perc-mu-1-100-relu-mse-sgd-03-real-fix}&
%%	&
%	\includegraphics[width=0.8in]{test-4ellip-20pair-noise-2perc-mu-1-100-relu-mse-sgd-03-real-fix}&
%%	&
%	\includegraphics[width=0.8in]{test-4ellip-20pair-noise-5perc-mu-1-100-relu-mse-sgd-03-real-fix}\\
\end{tabular}
\caption{CNN-DDSM reconstruction for different $\mu$: $(\mu_0, \mu) = (0, 200)$ (left three columns) and $(1, 100)$ (right three columns)}
\label{fig:4ellip-diff-mu}
\end{figure}

\subsection{3D Reconstruction}

In this subsection, we apply the DDSM to 3D DOT problems. We consider a cubic domain $\Omega=(-1,1)\times (-1,1) \times (-1,1)$, and two ellipsoids with the level-set functions $\Gamma_i(x_1,x_2,x_3)$, $i=1,2$, which have the axis length, rotation angles and center points sampled from $\mathcal{U}(0.4,0.6)$, $\mathcal{U}(0, 2\pi)$ and $\left[ \mathcal{U}(-0.4,0.4) \right]^3$, respectively. Similar to \eqref{gamma_distrib}, we let the inclusions be generated by the following function involving the random variables described above
\begin{equation}
\label{gamma_distrib_3D}
\Gamma(x_1,x_2,x_3) = \min_{i=1,2} \{ \Gamma(x_1,x_2,x_3) \}.
\end{equation}
In the 3D case, we employ the first 9 spheric harmonic functions below and map them to the surface of the domain
\begin{equation}
\begin{split}
&\Re Y^m_l(\theta,\phi) = \sqrt{ \frac{2l+1}{4\pi} \frac{(l-m)!}{(l+m)!} } P^m_l(\cos(\theta)) \sin(m \phi) \\
&\Im Y^m_l(\theta,\phi) = \sqrt{ \frac{2l+1}{4\pi} \frac{(l-m)!}{(l+m)!} } P^m_l(\cos(\theta)) \cos(m \phi)
\end{split}
\end{equation}
to generate the flux boundary data, where $P^m_l(z)$ are the Legendre polynomials with $l=0,1,2$, $0\le m \le l$, of which some examples are plotted in Figure \ref{fig: harmon3D}. The other setting-ups are similar to the 2D case.

%\begin{figure}[h!]
%\centering
%\includegraphics[width = 0.2\textwidth]{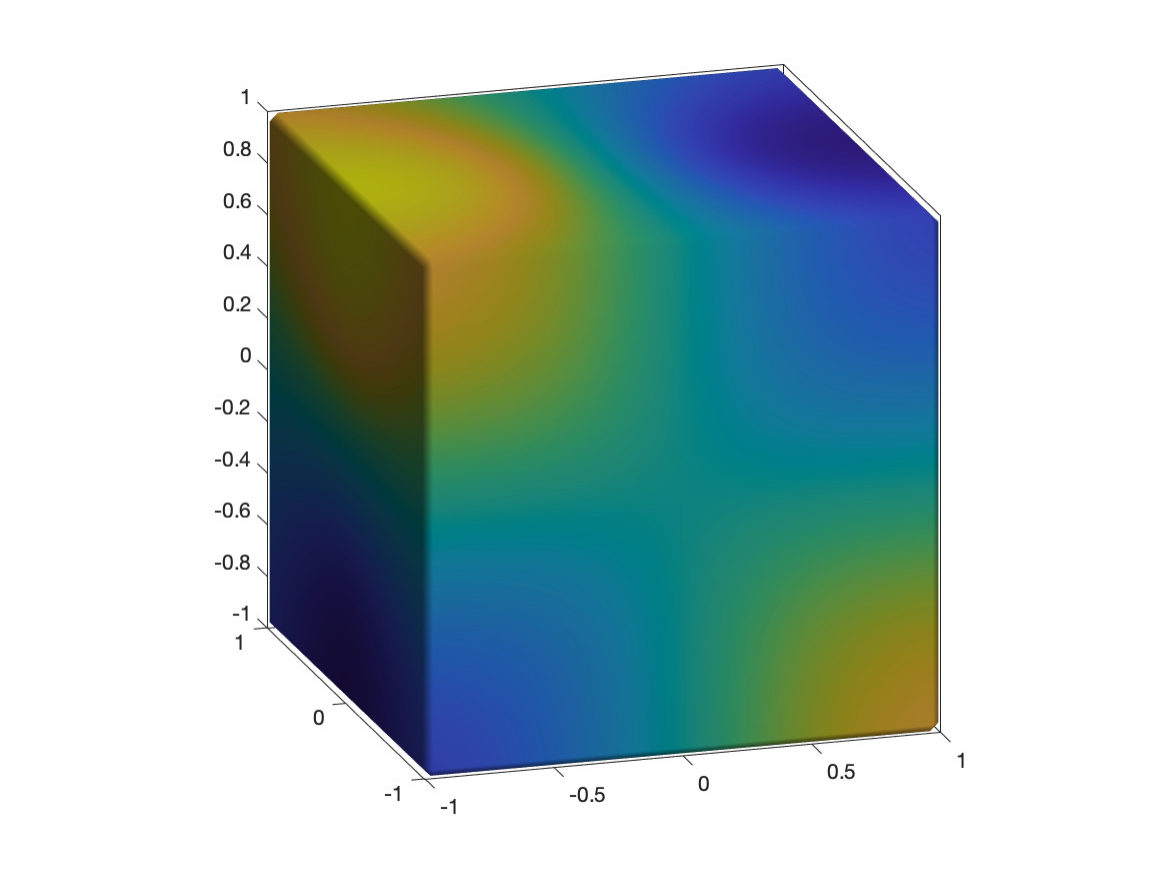}
%%\includegraphics[width = 0.2\textwidth]{./Harmon13}
%%\includegraphics[width = 0.2\textwidth]{./Harmon15}
%\includegraphics[width = 0.2\textwidth]{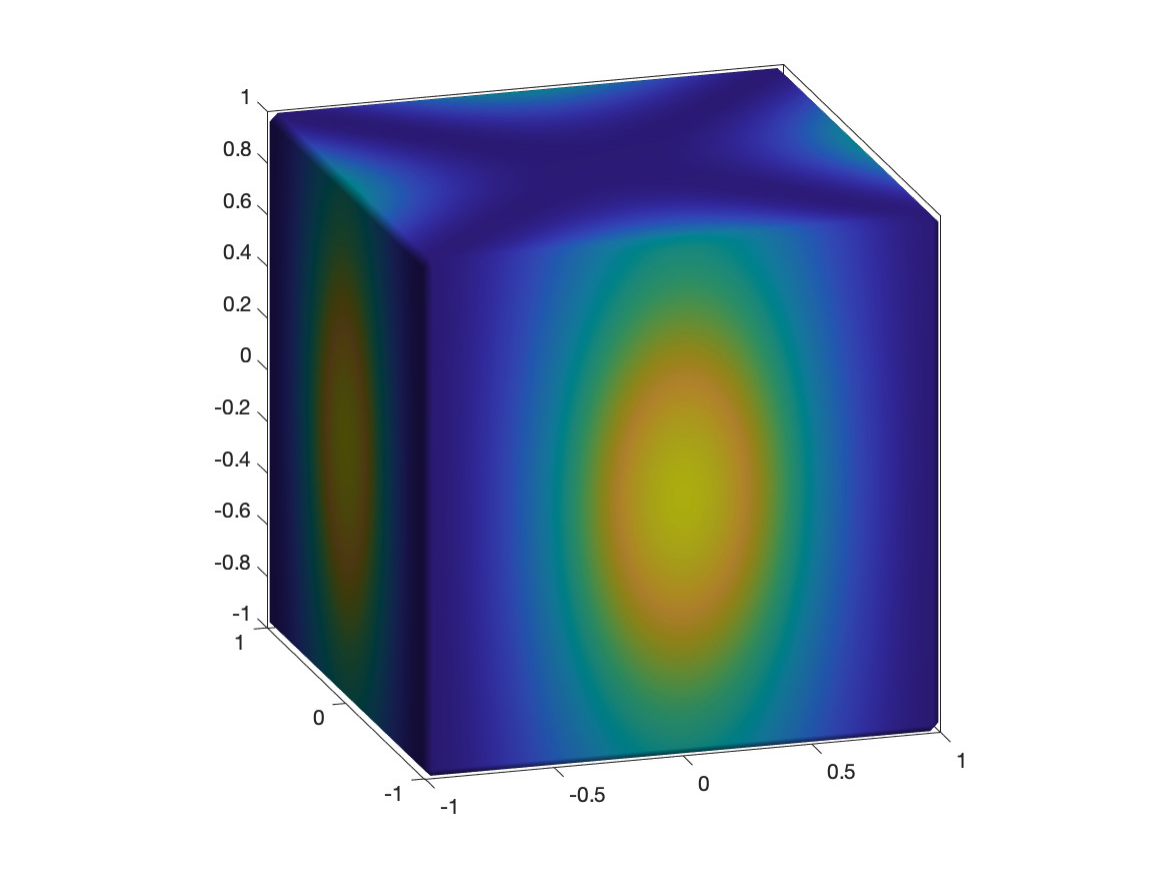}
%\caption{Harmonic functions on the surface of $\Omega$.}
%\label{fig: harmon3D}
%\end{figure}

\begin{figure}[h]
\centering
\begin{minipage}{0.6\textwidth}
  \centering
\includegraphics[width = 0.4\textwidth]{./Harmon12}
\includegraphics[width = 0.4\textwidth]{./Harmon14}
\caption{Harmonic functions on the surface of $\Omega$.}
\label{fig: harmon3D}
\end{minipage}
\begin{minipage}{0.3\textwidth}
  \centering
   \includegraphics[width=1.5in]{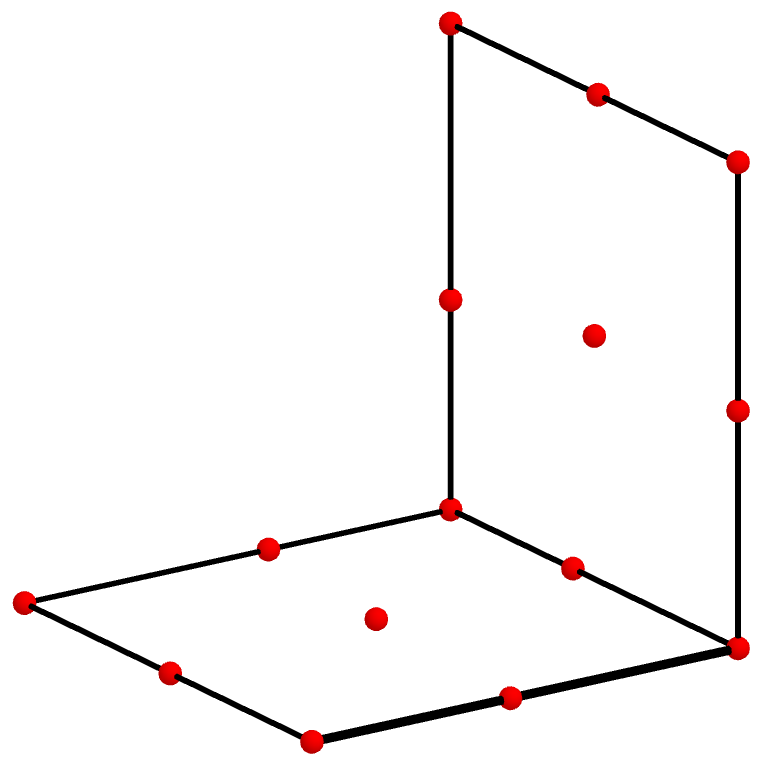}
  \caption{Data points on two faces of the cubic domain}
  \label{fig:cub_limtpts}
  \end{minipage}
\end{figure}

The reconstruction results of two typical cases are presented in Figure \ref{3D_ell}, where the two ellipsoids are glued together in the first case and separated in the second one. To show the reconstructed inclusion distribution, we employ the 3D density plots: the red, blue and mesh surface are corresponding to the isosurface with the values $0.75$, $0.06$ and $0.025$, respectively, which is the first row of each case. In addition, we also plot some cross sections which are given in the second row of each case. As shown by all these plots, the reconstruction results are quite accurate and also robust with respect to large noise. Note that for 3D DOT problems, solving 3D forward problems for iterative methods can be highly expensive. Given the efficiency, accuracy and robustness of the proposed DDSM, we believe it can be very attractive for further real-world applications. 

For the second case, we also test the performance of the DDSM for that the boundary data are only available at a few points not every grid point. Specifically, as only the low-frequency data are used for the 3D examples, we assume there are only 9 points evenly distributed on each face of the domain, see Figure \ref{fig:cub_limtpts} for an illustration, then there are only 26 points distributed on the boundary. Similar to the 2D case, linear interpolation is used to generate data functions on each face. The reconstruction results are presented in Figure \ref{3D_limtdata}, where we can see the shape can be recovered quite well even though the data is extremely limited.

\begin{figure}[htbp]
\begin{tabular}{ >{\centering\arraybackslash}m{1.3in}>{\centering\arraybackslash}m{1.3in} >{\centering\arraybackslash}m{1.3in}  >{\centering\arraybackslash}m{1.3in}   }
	\centering
	True coefficients &
          $\delta=0$ &
          $\delta=5\%$ &
          $\delta=10\%$ \\
%	  $\delta=5\%$ &
%	  $\delta=10\%$ \\
	\includegraphics[width=1.3in]{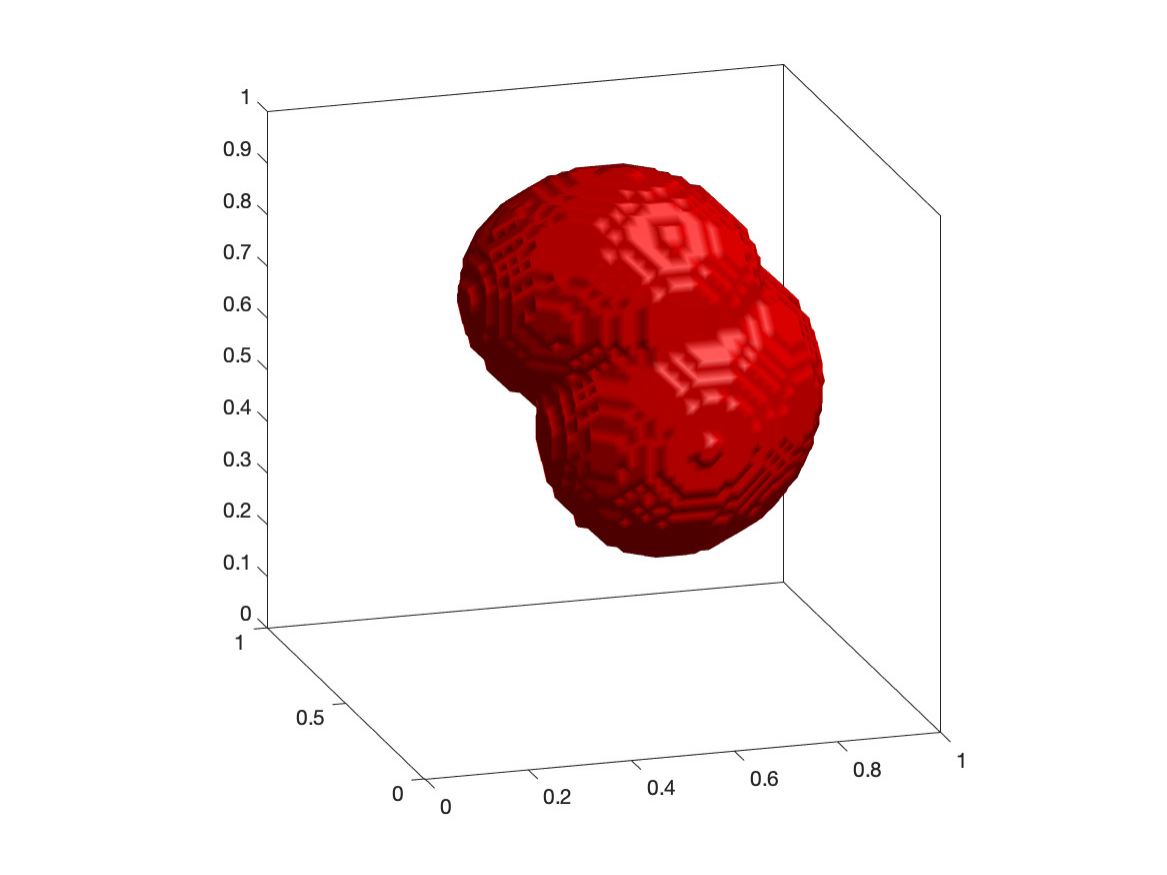}&
	\includegraphics[width=1.3in]{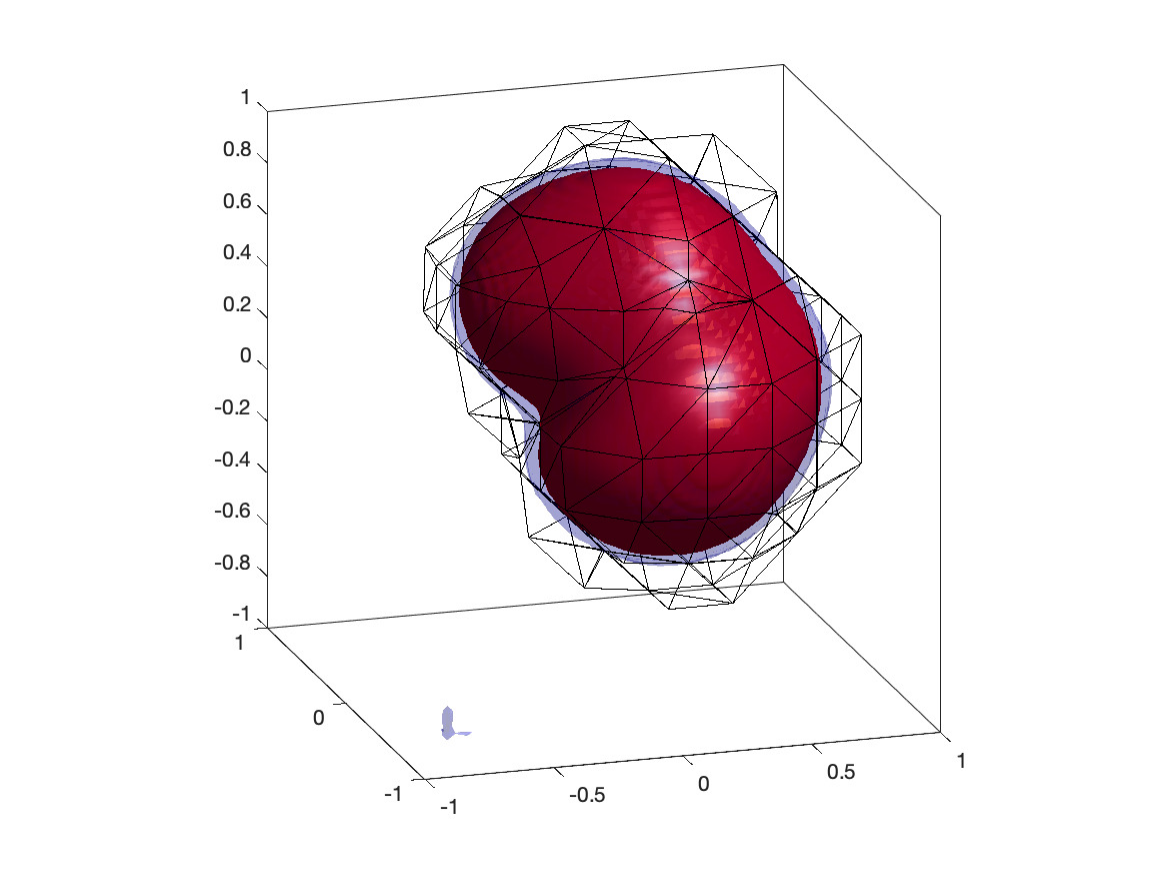}&
	\includegraphics[width=1.3in]{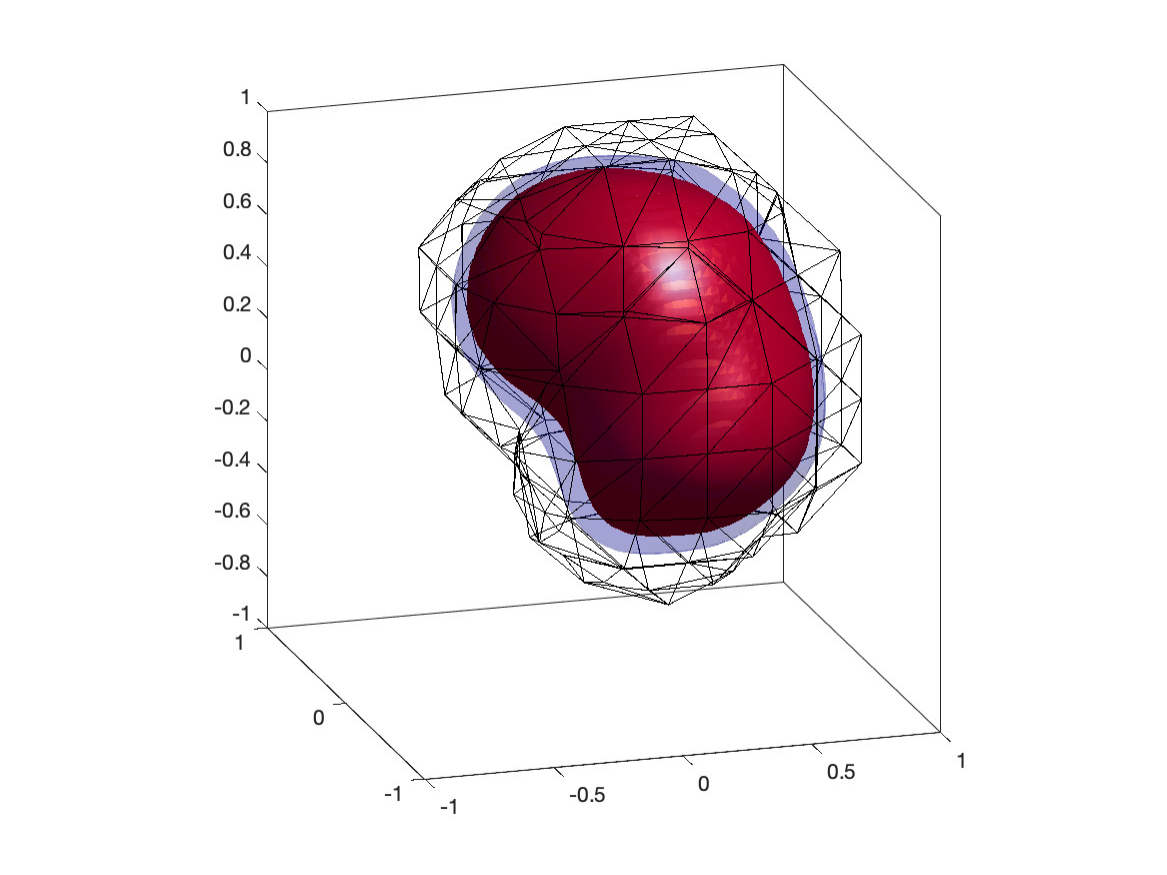}&
	\includegraphics[width=1.3in]{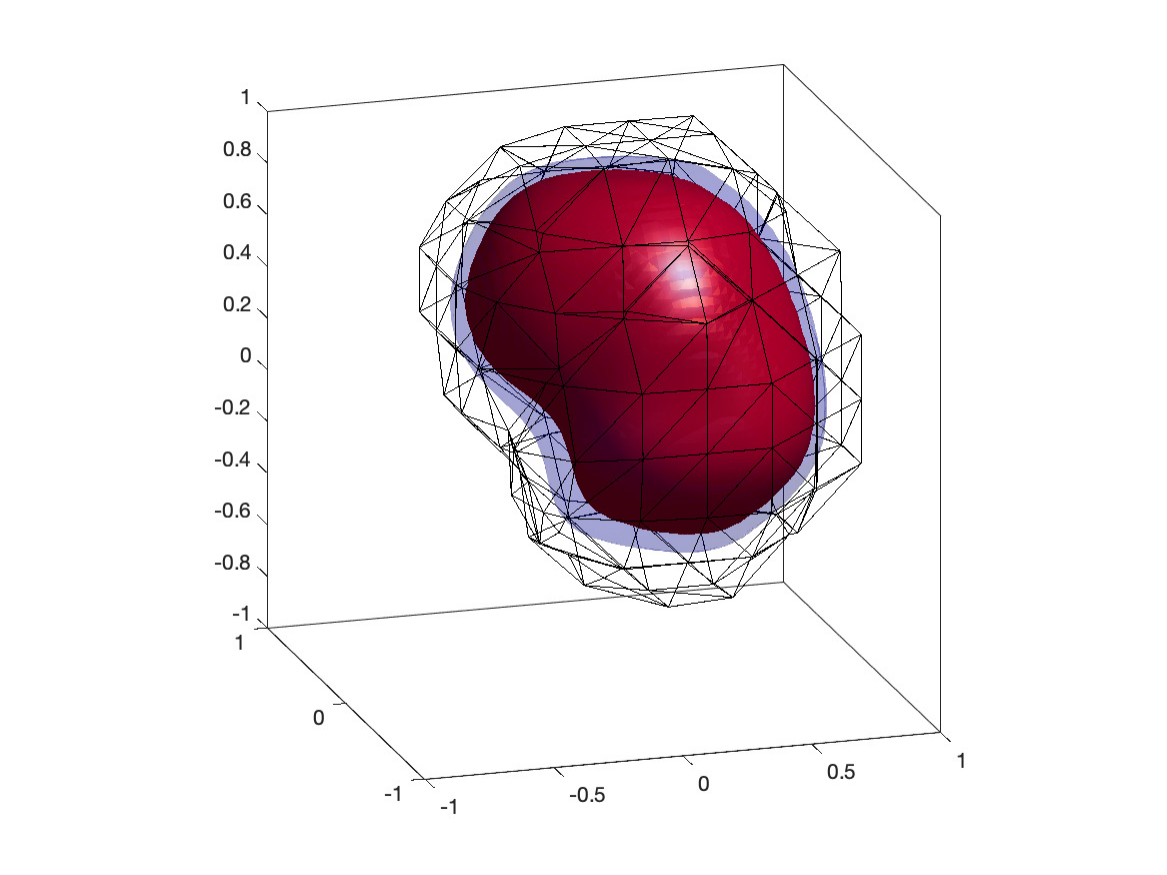}\\
         \includegraphics[width=1.3in]{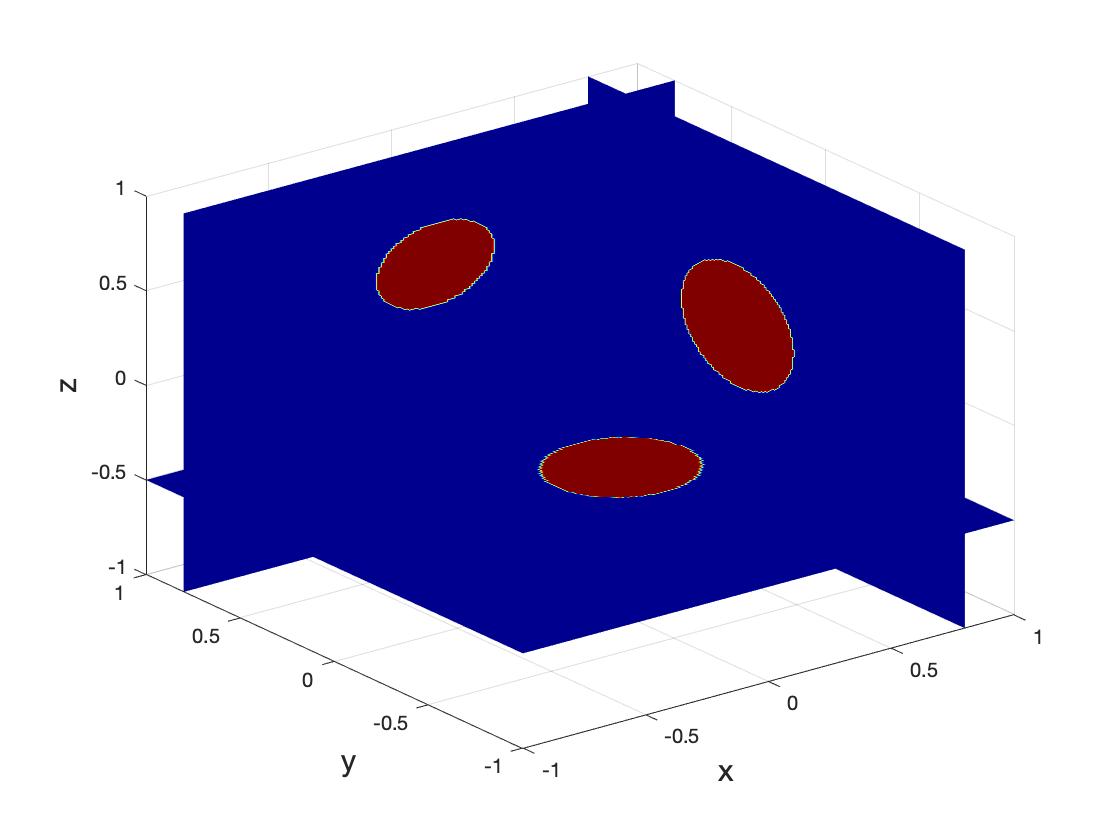}&
	\includegraphics[width=1.3in]{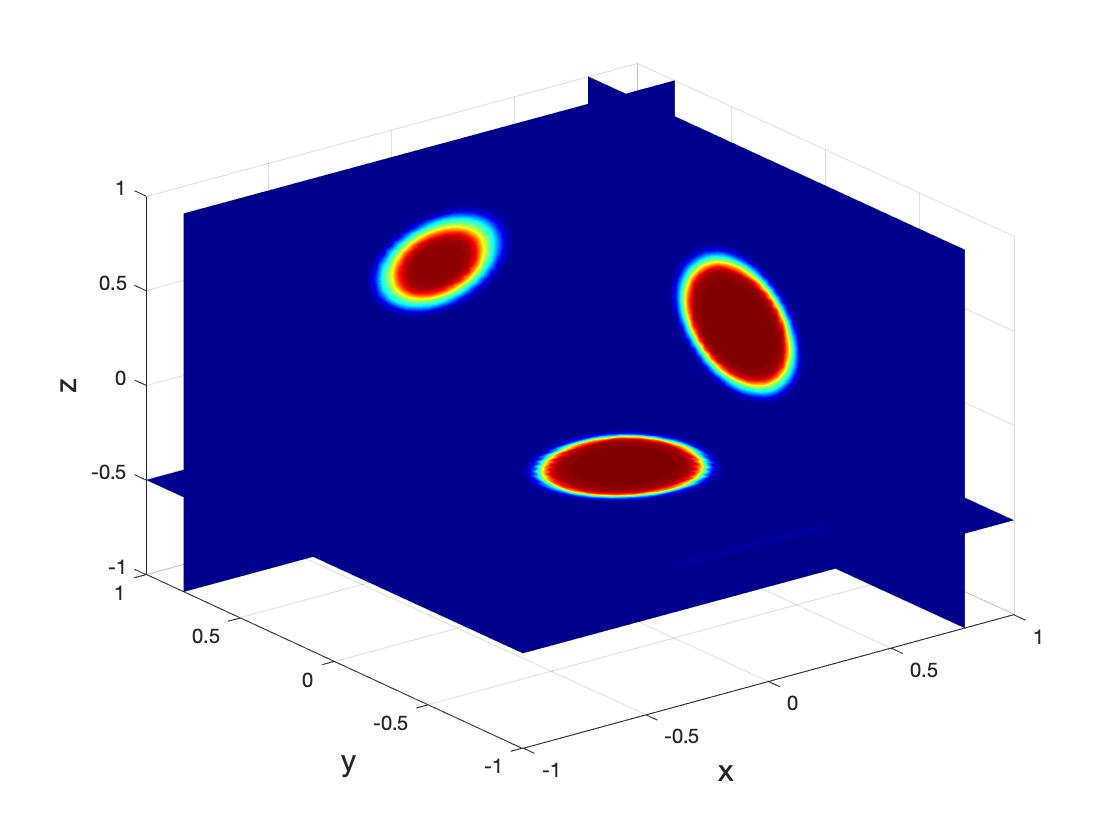}&
	\includegraphics[width=1.3in]{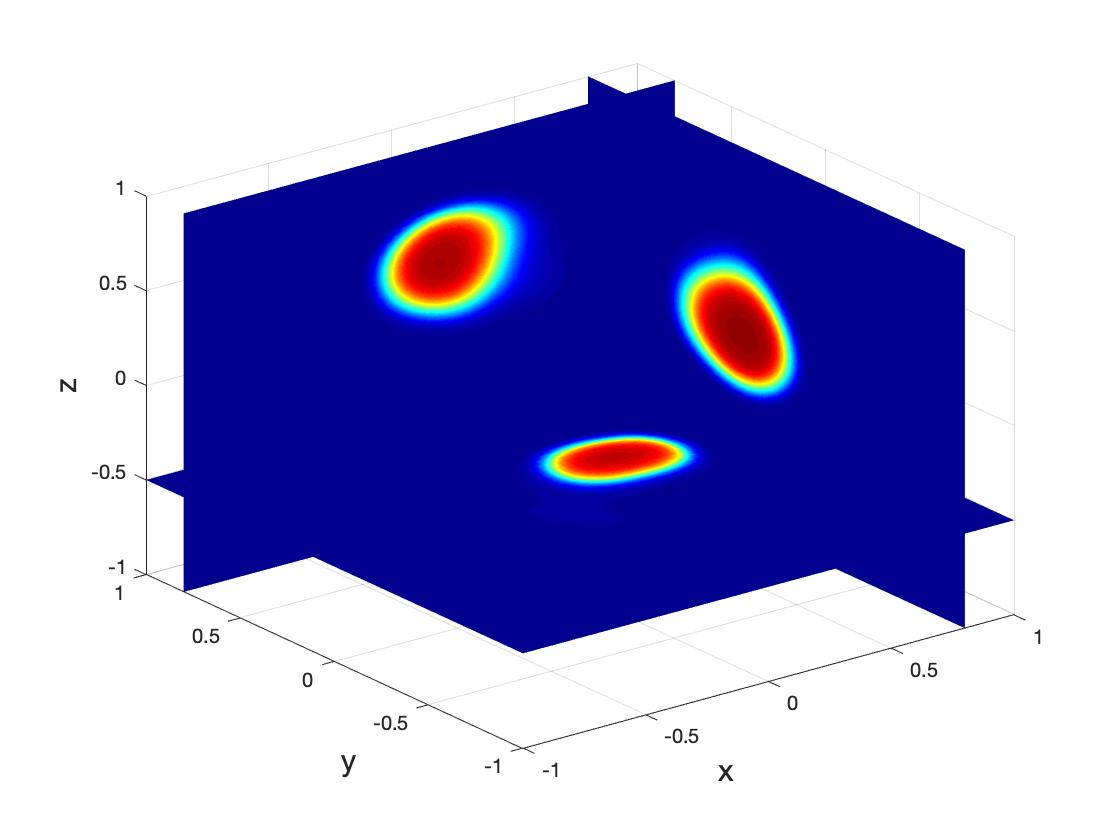}&
	\includegraphics[width=1.3in]{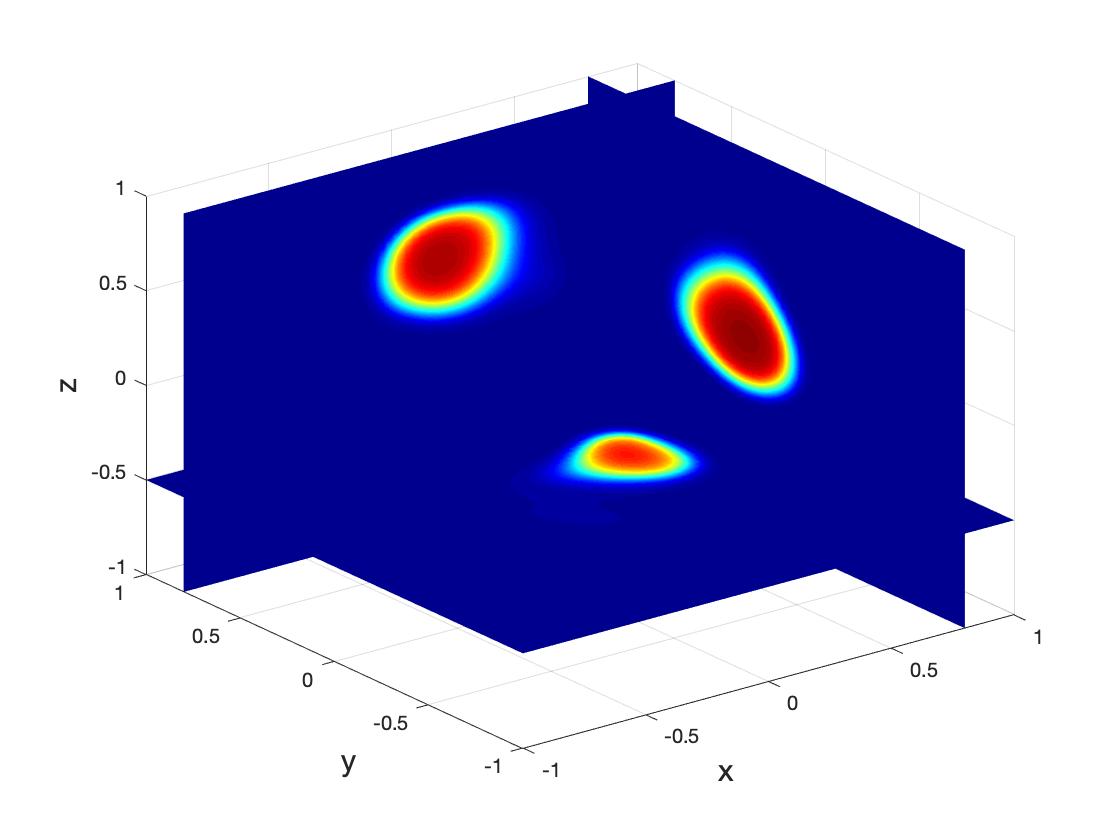}\\
	\includegraphics[width=1.3in]{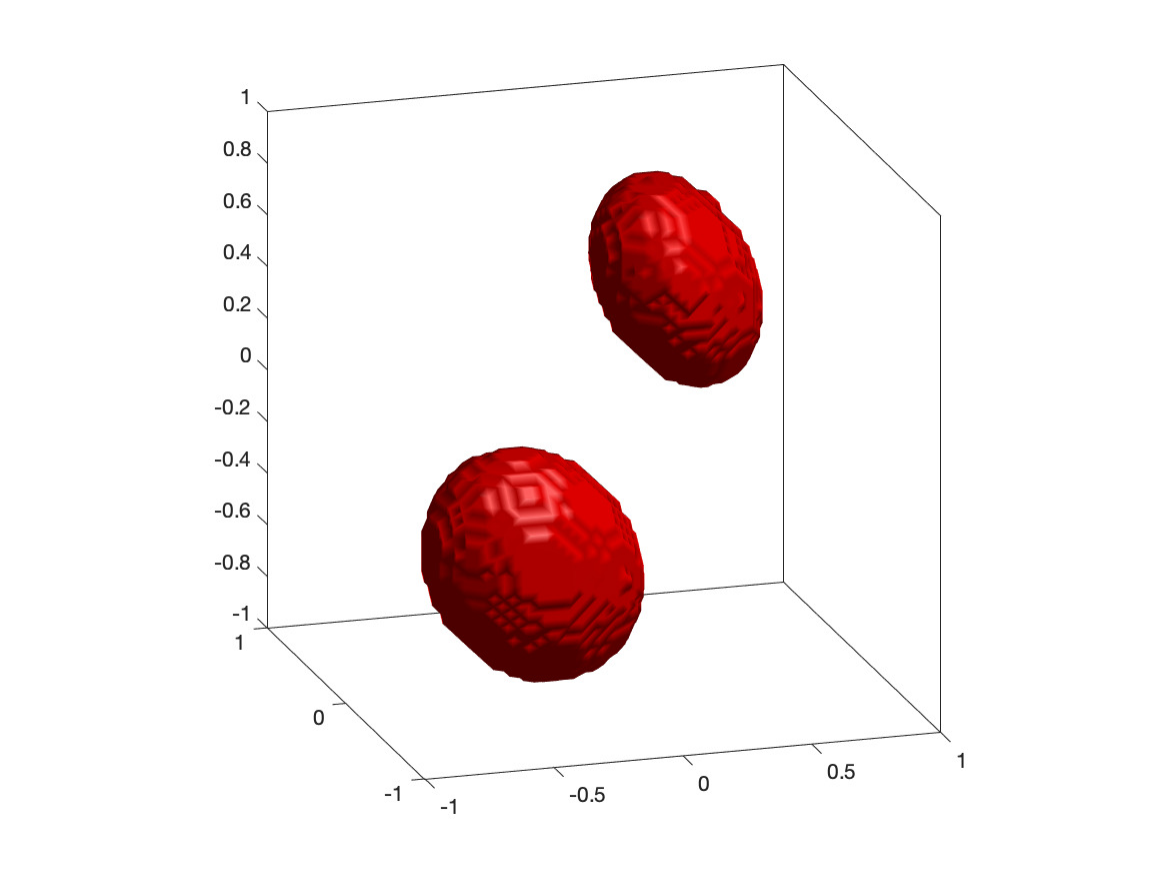}&
	\includegraphics[width=1.3in]{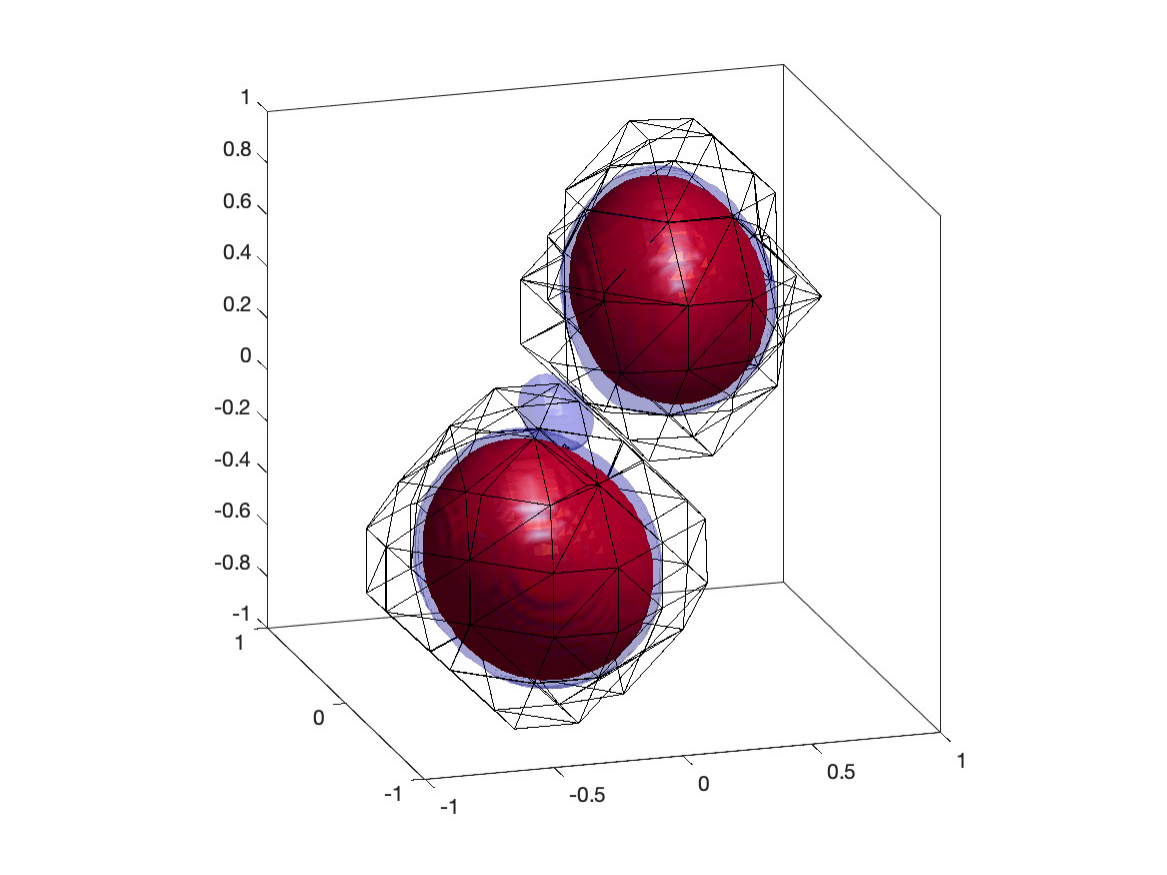}&
	\includegraphics[width=1.3in]{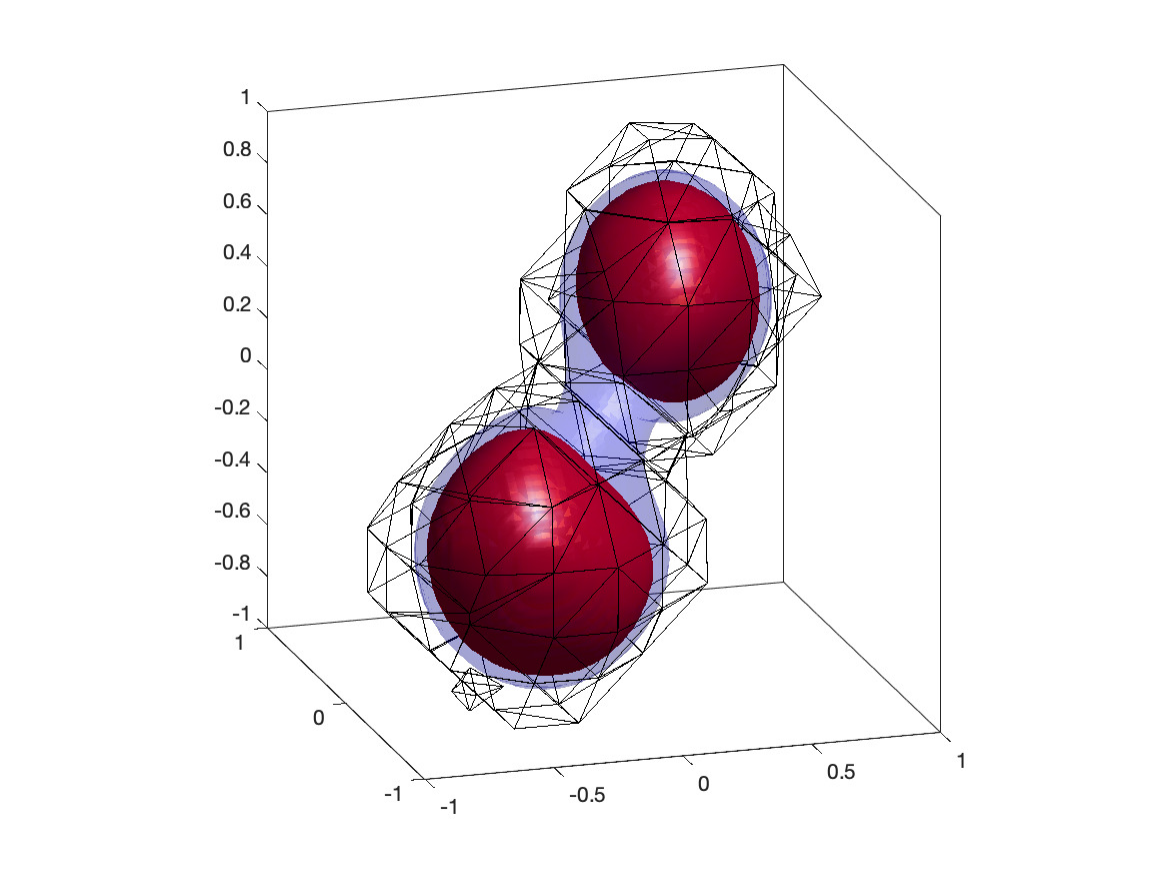}&
	\includegraphics[width=1.3in]{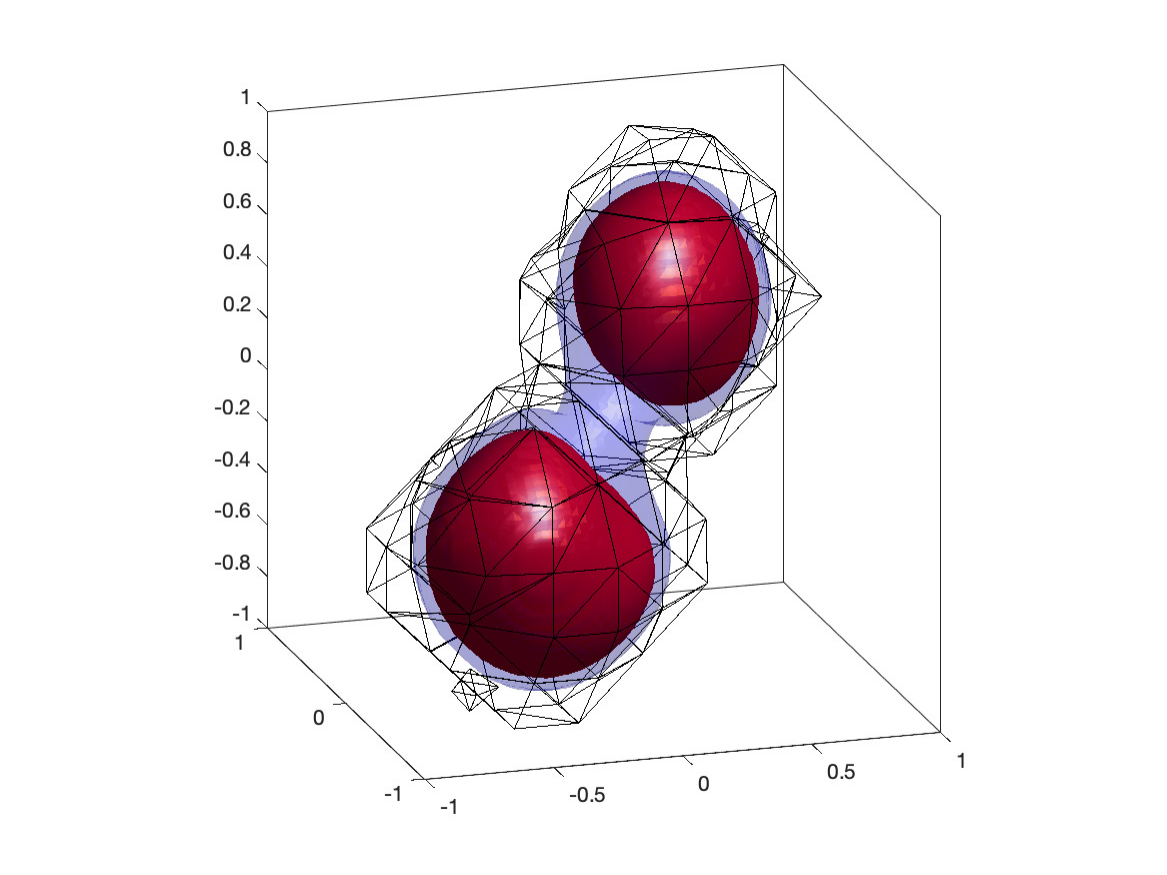}\\
	\includegraphics[width=1.3in]{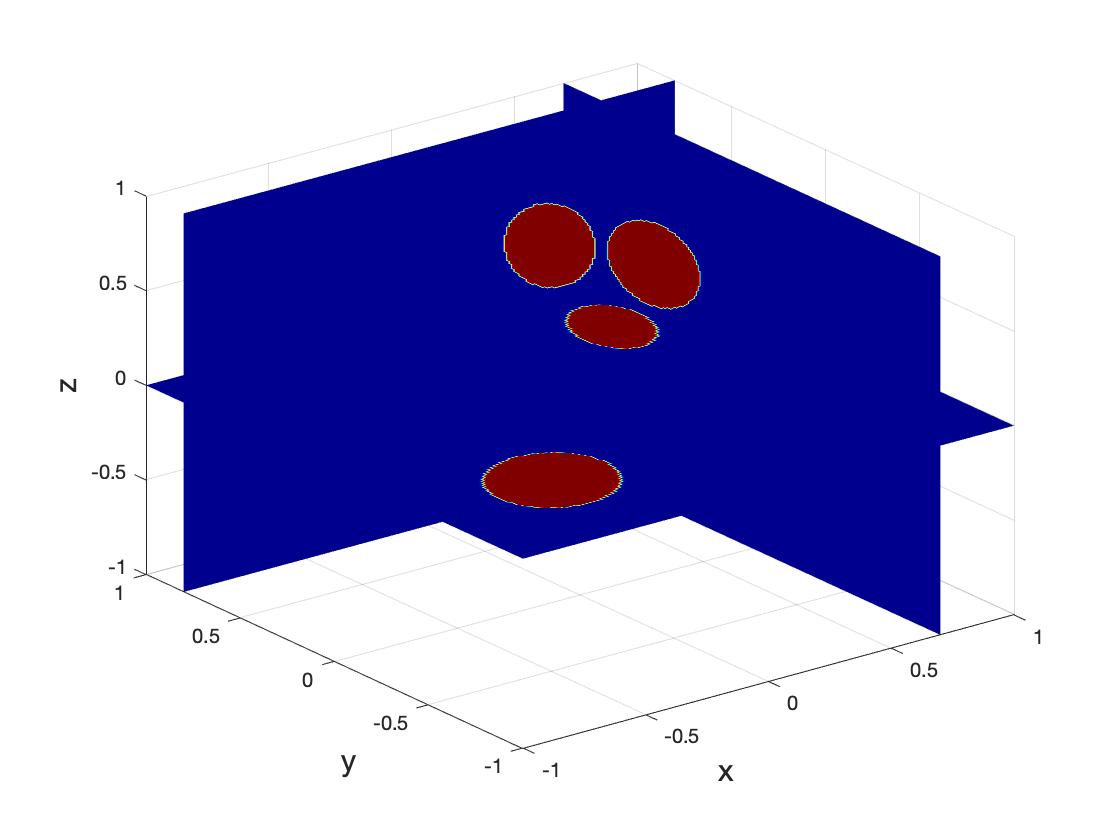}&
	\includegraphics[width=1.3in]{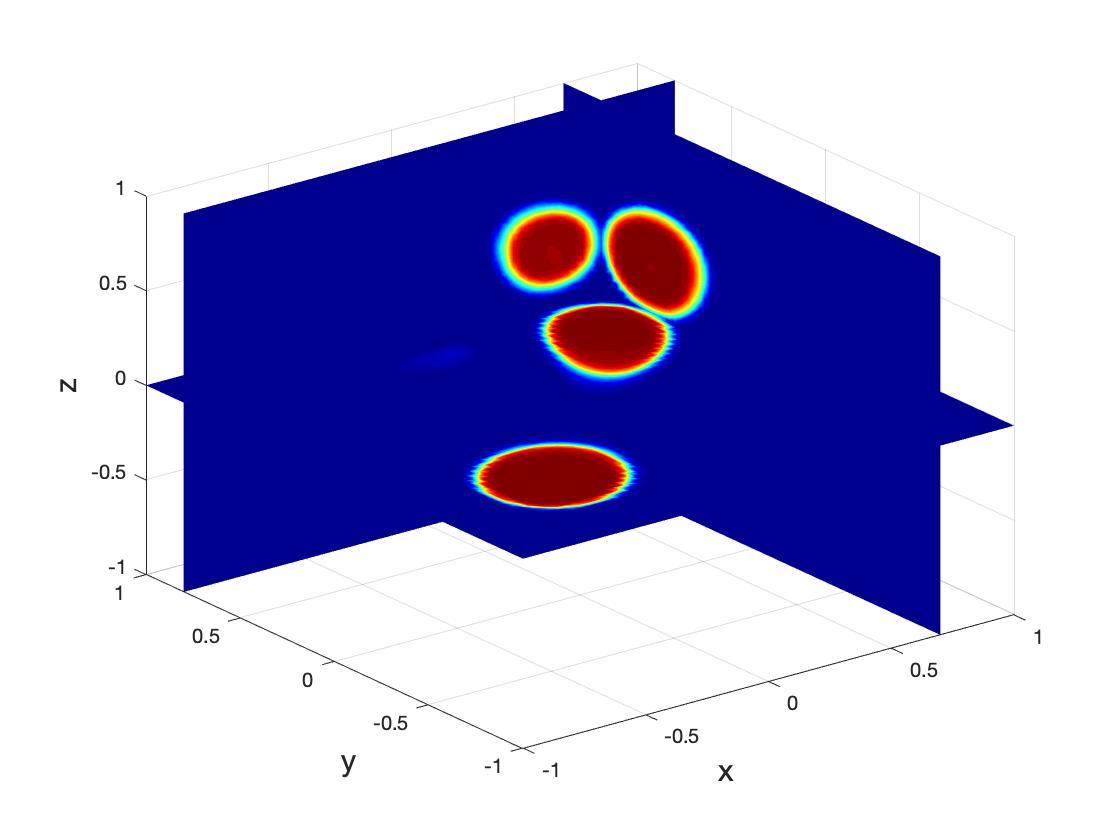}&
	\includegraphics[width=1.3in]{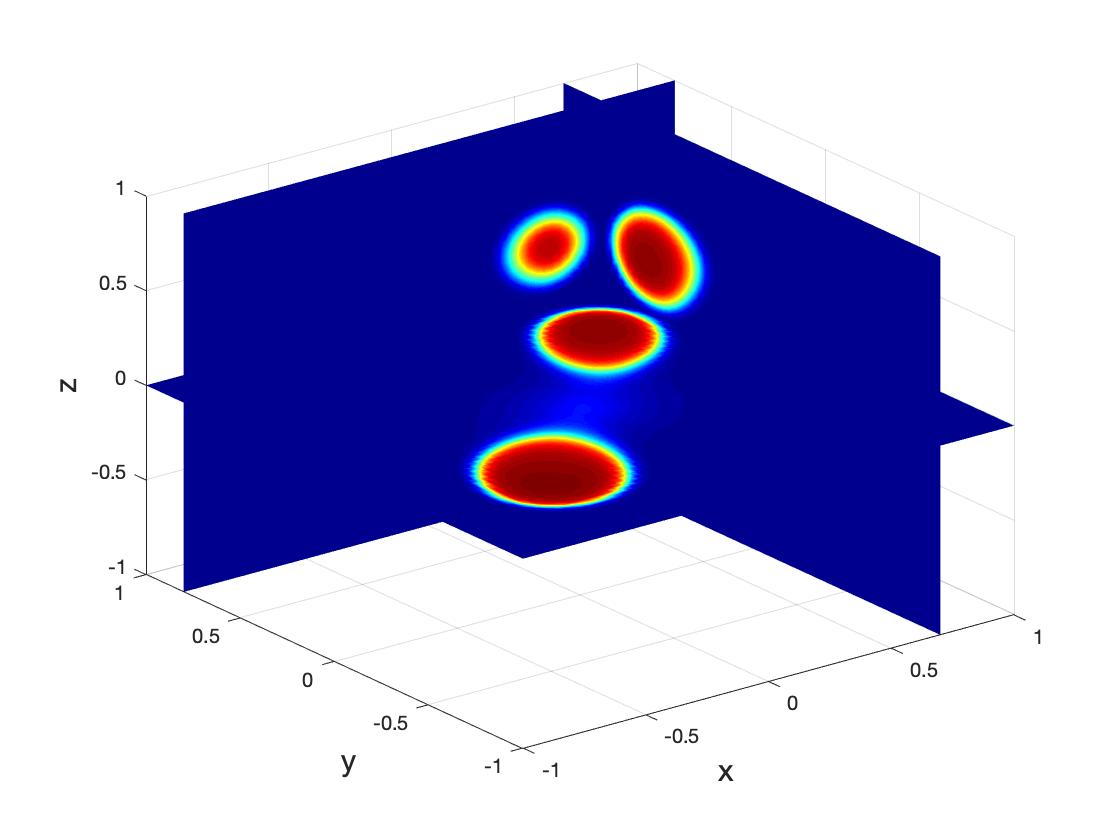}&
	\includegraphics[width=1.3in]{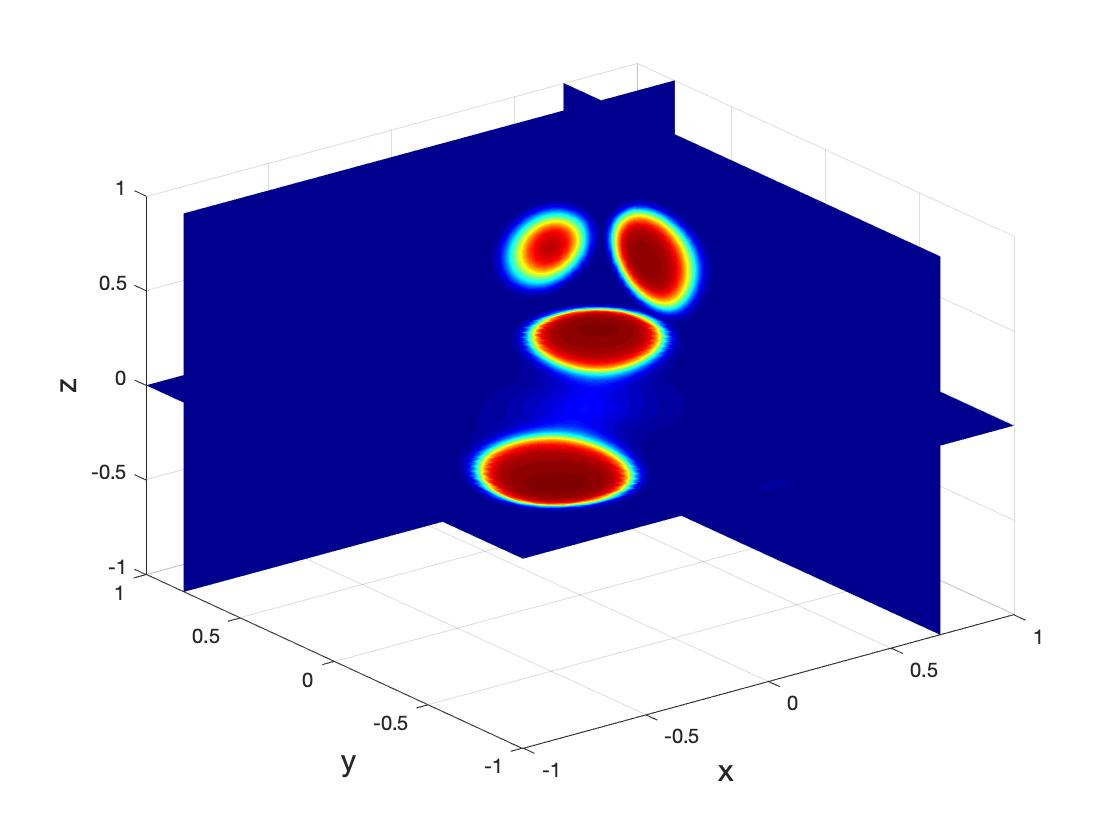}\\	
%	\includegraphics[width=0.8in]{test-cir-20pair-noise-5perc-relu-mse-sgd-04-real}&
%	\includegraphics[width=0.8in]{test-cir-20pair-noise-10perc-relu-mse-sgd-04-real}\\
%	%%
%	\includegraphics[width=0.8in]{true-cir-07}&
%	\includegraphics[width=0.8in]{test-cir-1pair-noise-0perc-relu-mse-sgd-07}&
%	\includegraphics[width=0.8in]{test-cir-10pair-noise-0perc-relu-mse-sgd-07}&
%	\includegraphics[width=0.8in]{test-cir-20pair-noise-0perc-relu-mse-sgd-07}&
%	\includegraphics[width=0.8in]{test-cir-20pair-noise-5perc-relu-mse-sgd-07-real}&
%	\includegraphics[width=0.8in]{test-cir-20pair-noise-10perc-relu-mse-sgd-07-real}\\
%	%%
%	\includegraphics[width=0.8in]{true-cir-16}&
%	\includegraphics[width=0.8in]{test-cir-1pair-noise-0perc-relu-mse-sgd-16}&
%	\includegraphics[width=0.8in]{test-cir-10pair-noise-0perc-relu-mse-sgd-16}&
%	\includegraphics[width=0.8in]{test-cir-20pair-noise-0perc-relu-mse-sgd-16}&
%	\includegraphics[width=0.8in]{test-cir-20pair-noise-5perc-relu-mse-sgd-16-real}&
%	\includegraphics[width=0.8in]{test-cir-20pair-noise-10perc-relu-mse-sgd-16-real}\\
	%%
\end{tabular}
\caption{Reconstruction for 2 cases with different Cauchy data number and noise level: Case 1(top) and Case 2(bottom) } 
\label{3D_ell}
\end{figure}

\begin{figure}[htbp]
\begin{tabular}{ >{\centering\arraybackslash}m{1.3in}>{\centering\arraybackslash}m{1.3in} >{\centering\arraybackslash}m{1.3in}  >{\centering\arraybackslash}m{1.3in}   }
	\centering
	True coefficients &
          $\delta=0$ &
          $\delta=5\%$ &
          $\delta=10\%$ \\
%	  $\delta=5\%$ &
%	  $\delta=10\%$ \\
	\includegraphics[width=1.3in]{3D_case3_true}&
	\includegraphics[width=1.3in]{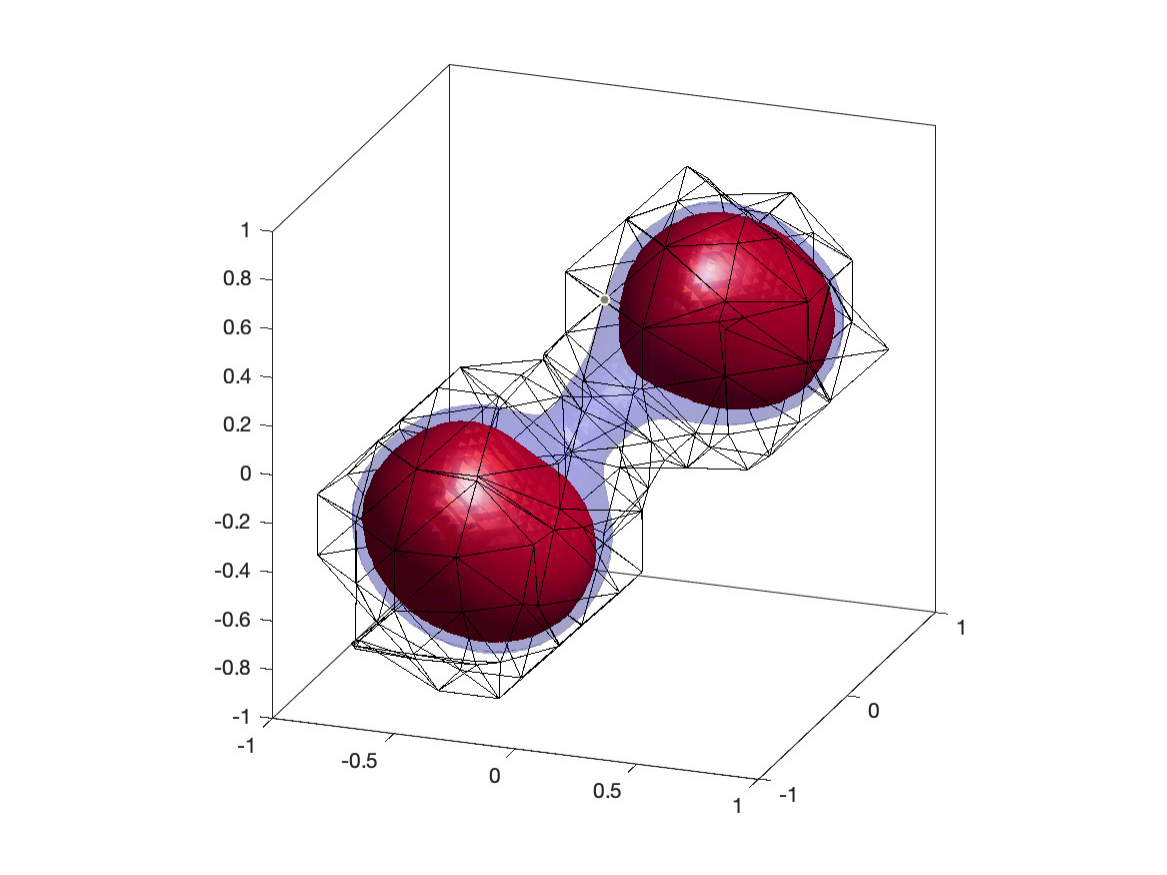}&
	\includegraphics[width=1.3in]{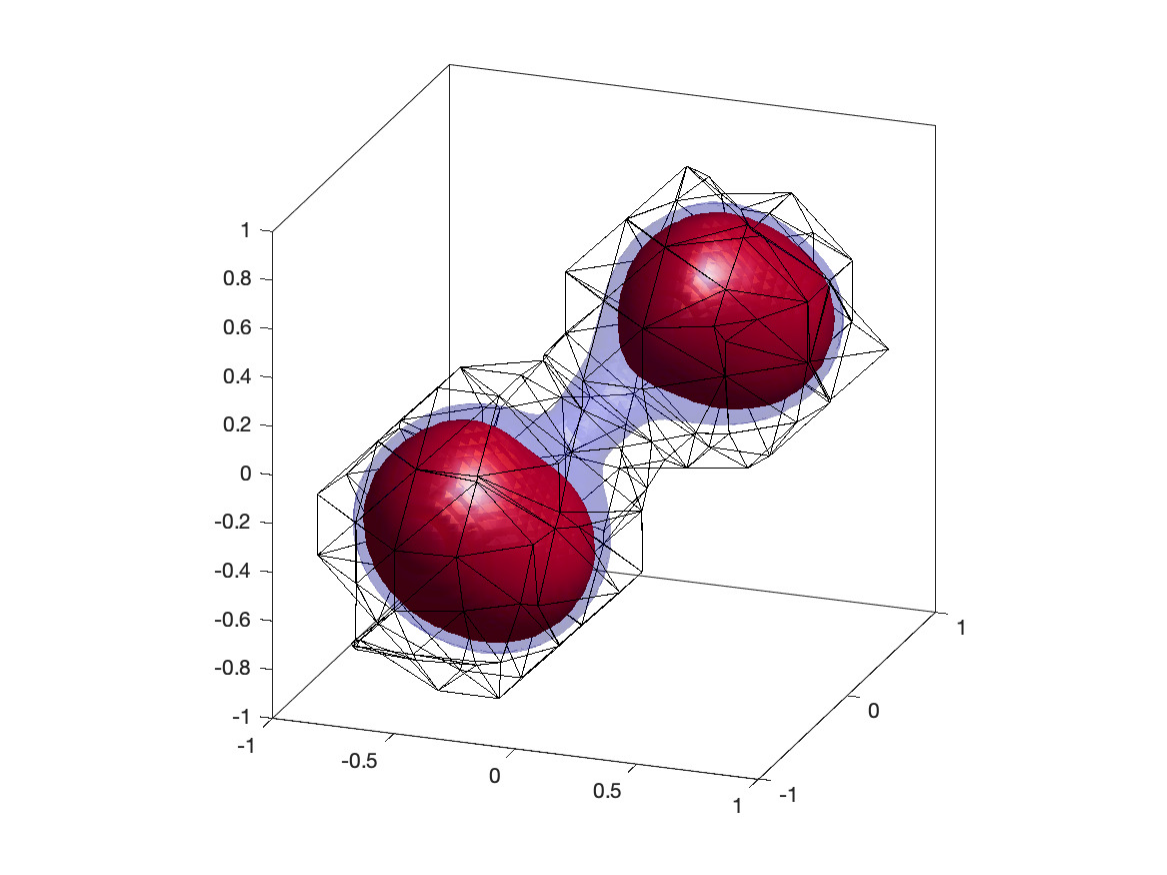}&
	\includegraphics[width=1.3in]{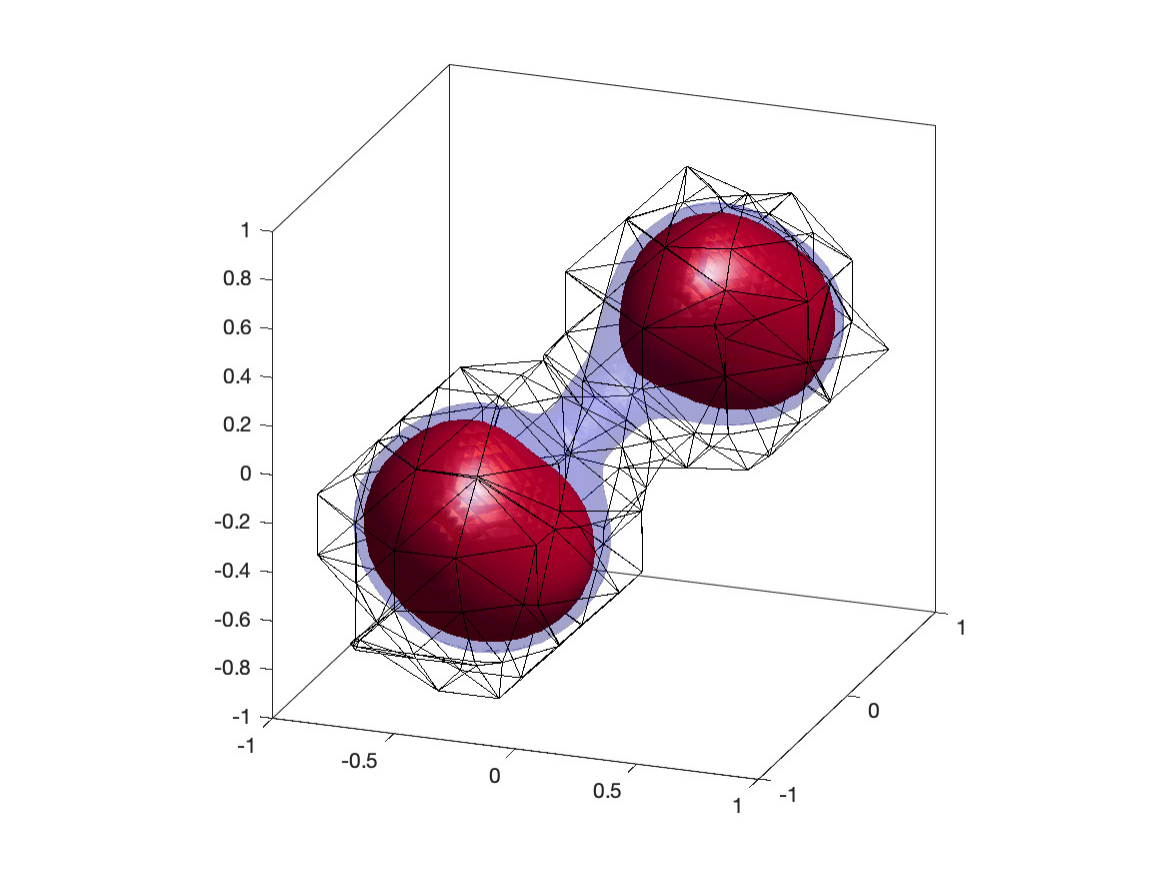}\\
         \includegraphics[width=1.3in]{I_true_1SPL_3d_2ellipsoid_divided_201_01}&
	\includegraphics[width=1.3in]{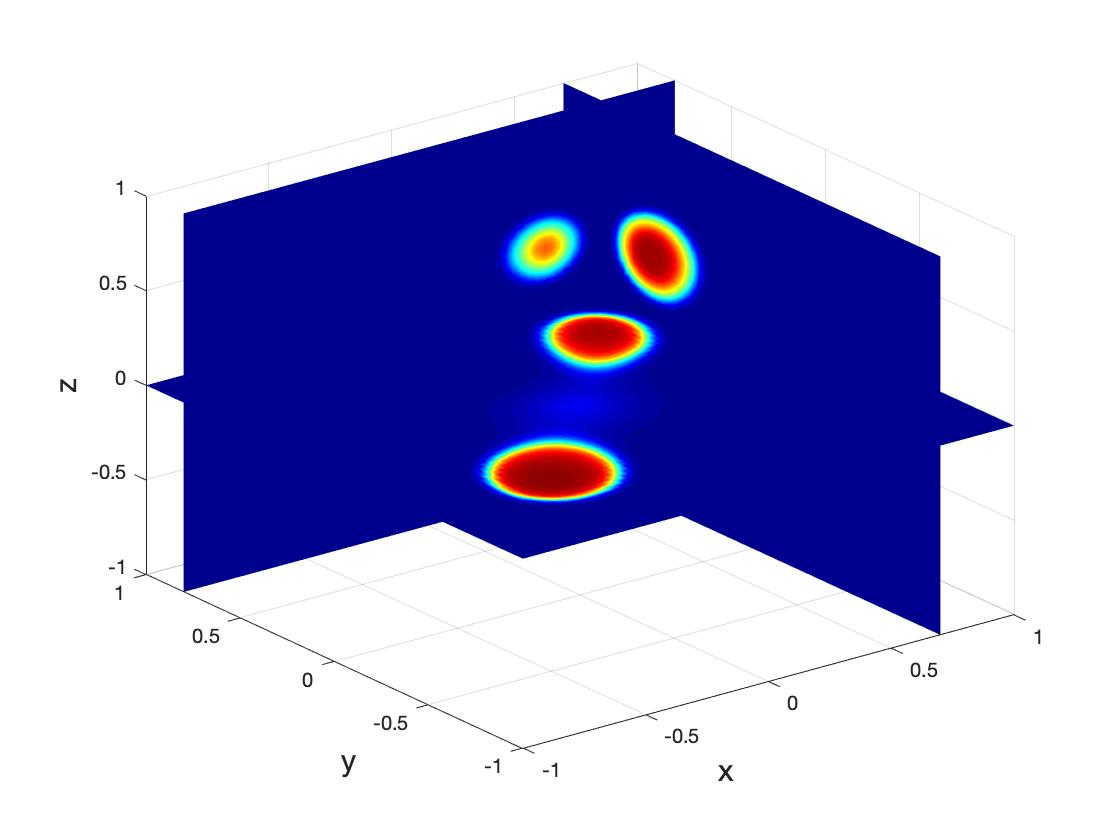}&
	\includegraphics[width=1.3in]{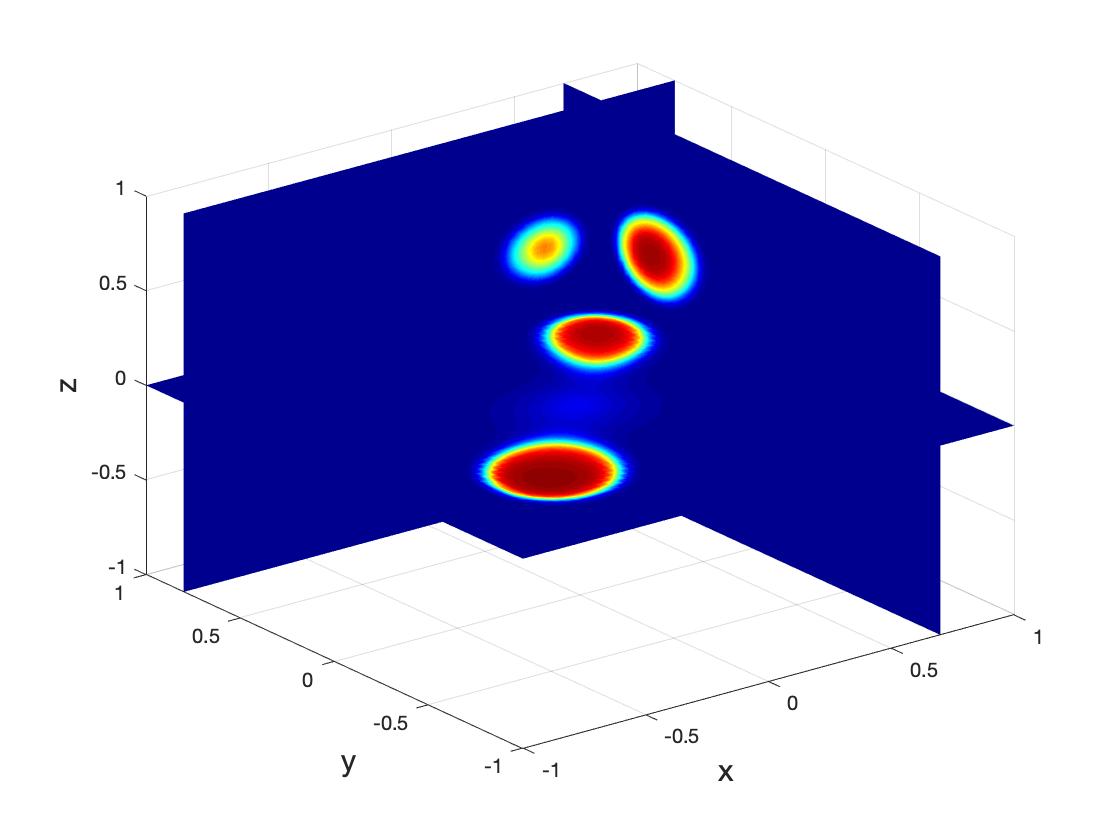}&
	\includegraphics[width=1.3in]{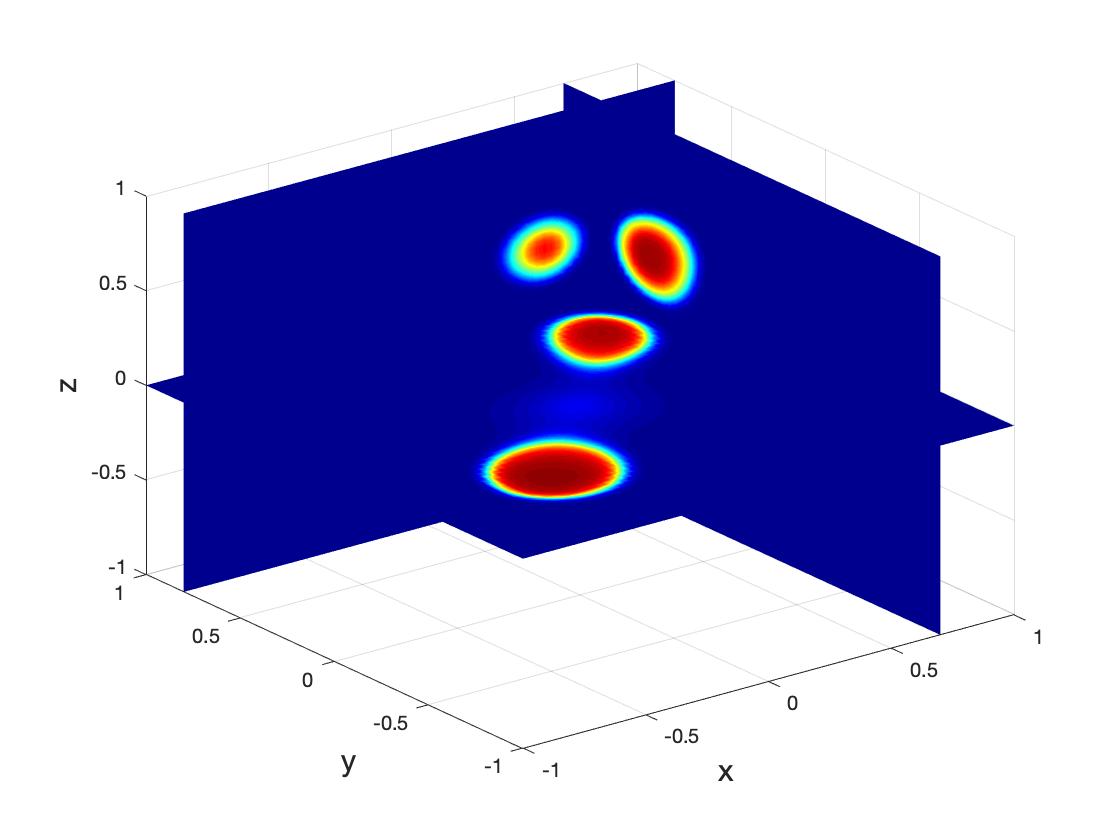}\\
\end{tabular}
\caption{Reconstruction with limited data points on the boundary. } 
\label{3D_limtdata}
\end{figure}

Moreover, we apply the DNN to an inclusion which has a duck shape that is not in the scope of the training data set, i.e., it is certainly not a union of two ellipsoids. In this case, we can still observe that the reconstruction can capture the basic duck shape which is still robust with respect to the large noise. But the geometry of the beak is lost as the training data set is only sampled from two ellipsoids. It can be expected that more detailed geometry can not be recovered accurately. However, one can still add more inclusion samples with various geometric randomness to the training data set in order to attain better accuracy.

\begin{figure}[htbp]
\begin{tabular}{ >{\centering\arraybackslash}m{1.3in}>{\centering\arraybackslash}m{1.3in} >{\centering\arraybackslash}m{1.3in}  >{\centering\arraybackslash}m{1.3in}   }
	\centering
	True coefficients &
          $\delta=0\%$ &
          $\delta=5\%$ &
          $\delta=10\%$ \\
%	  $\delta=5\%$ &
%	  $\delta=10\%$ \\
	\includegraphics[width=1.3in]{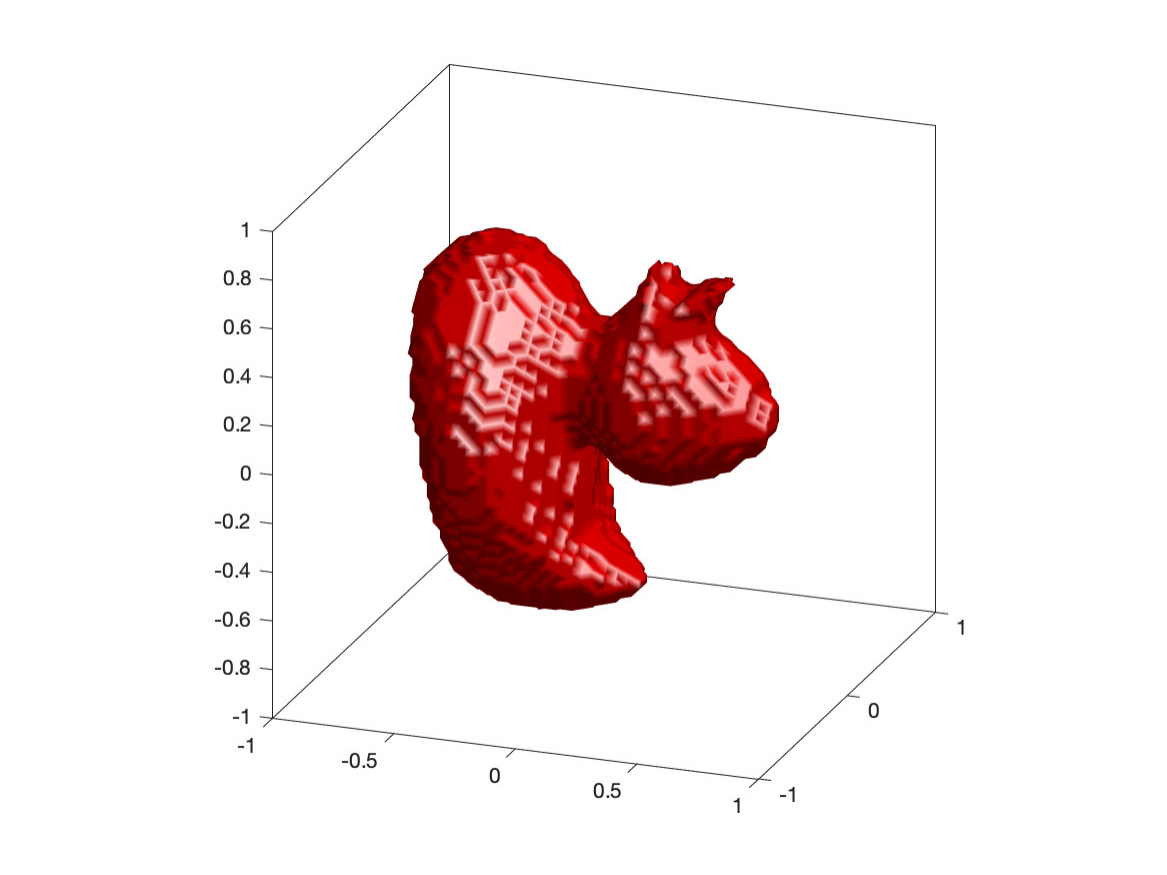}&
	\includegraphics[width=1.3in]{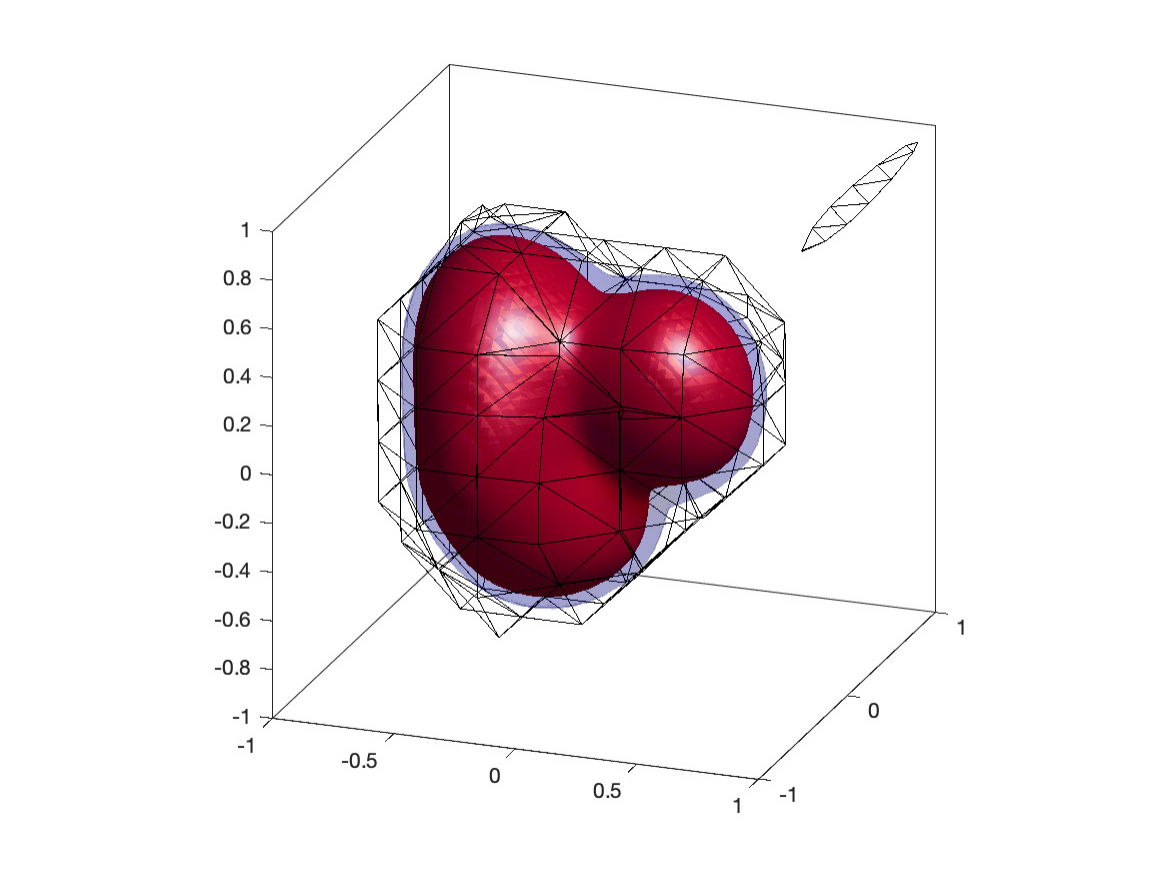}&
	\includegraphics[width=1.3in]{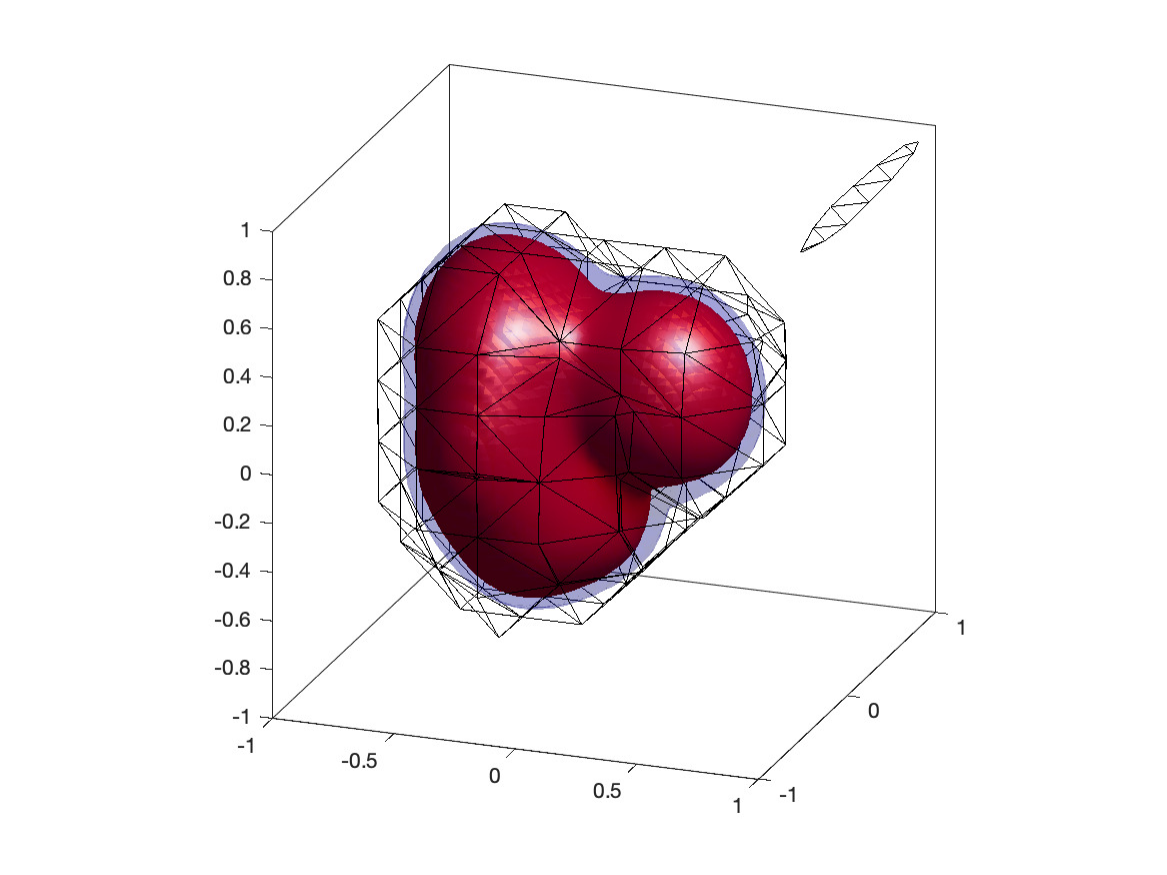}&
	\includegraphics[width=1.3in]{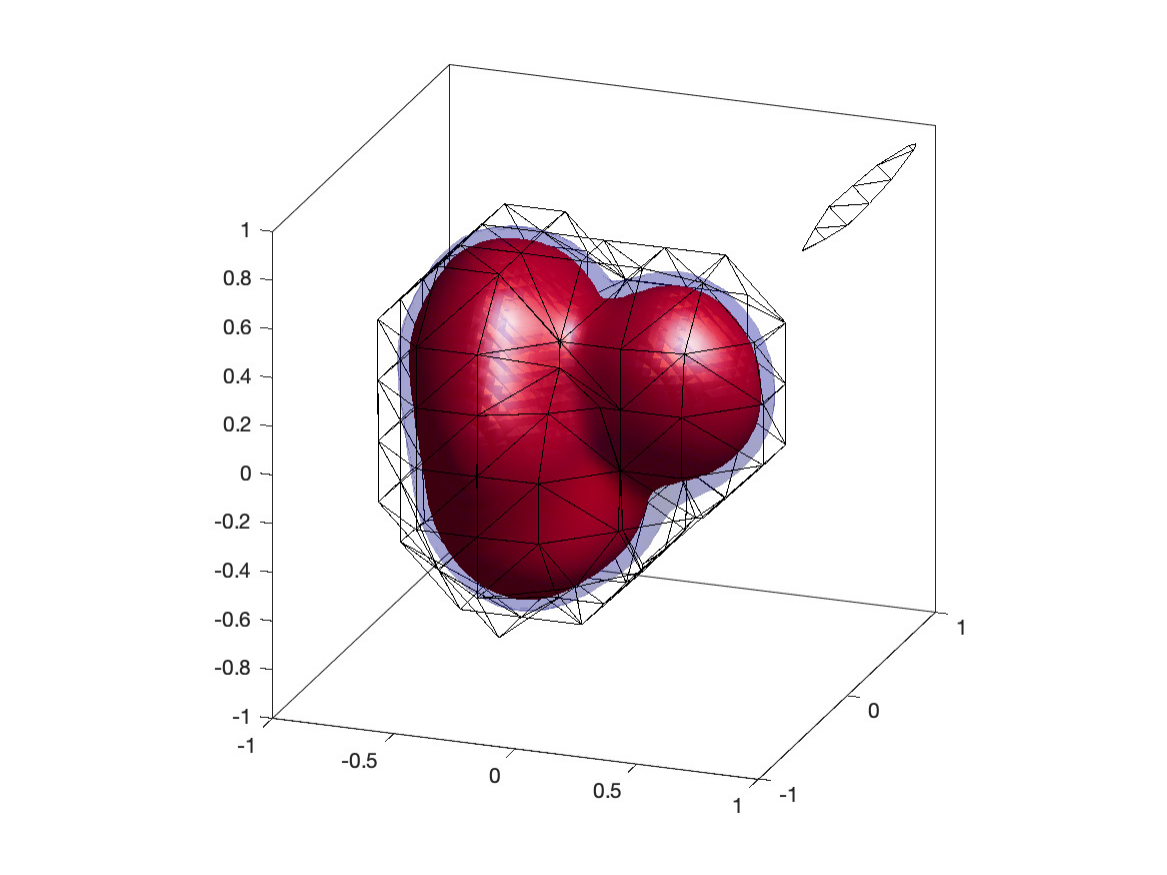}\\
	\includegraphics[width=1.3in]{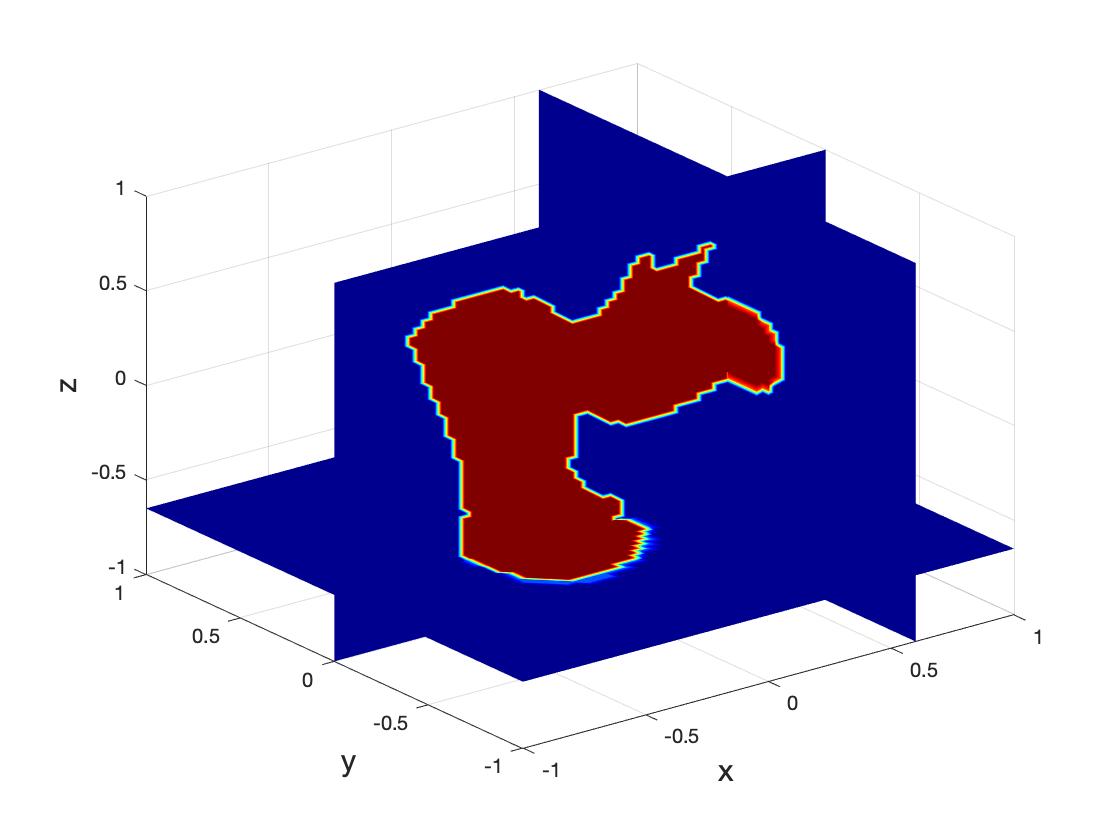}&
	\includegraphics[width=1.3in]{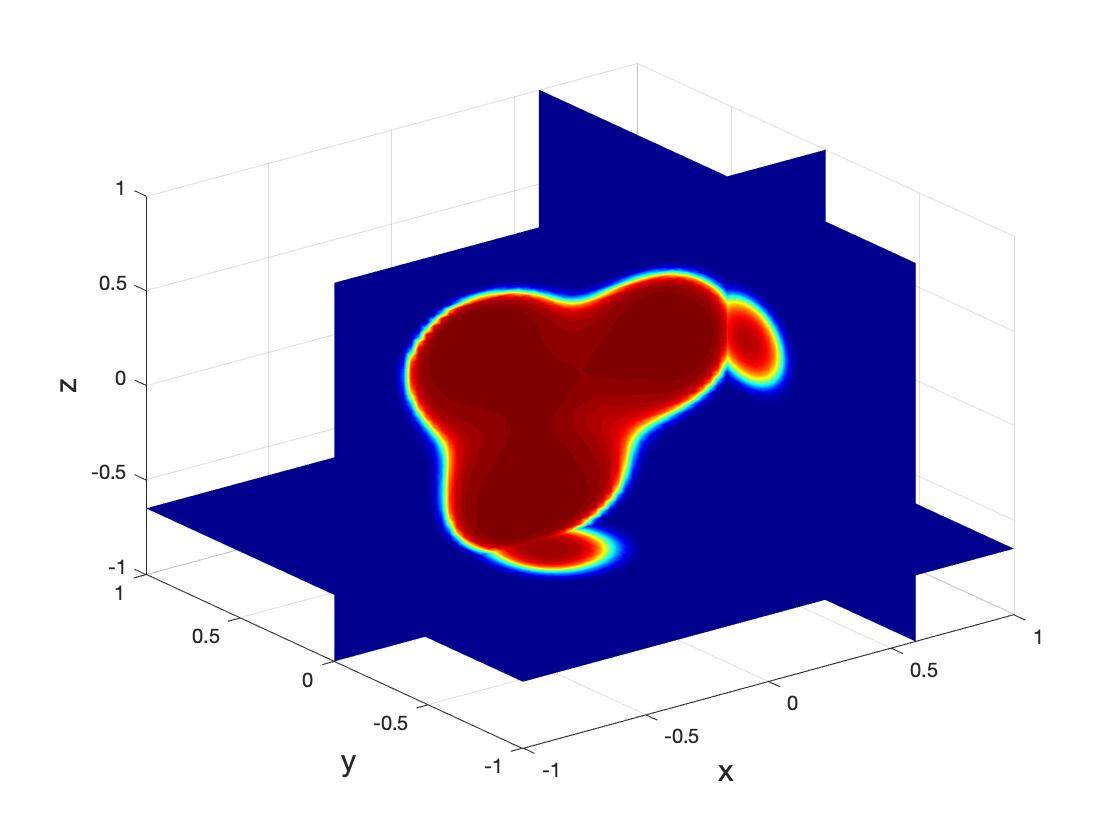}&
	\includegraphics[width=1.3in]{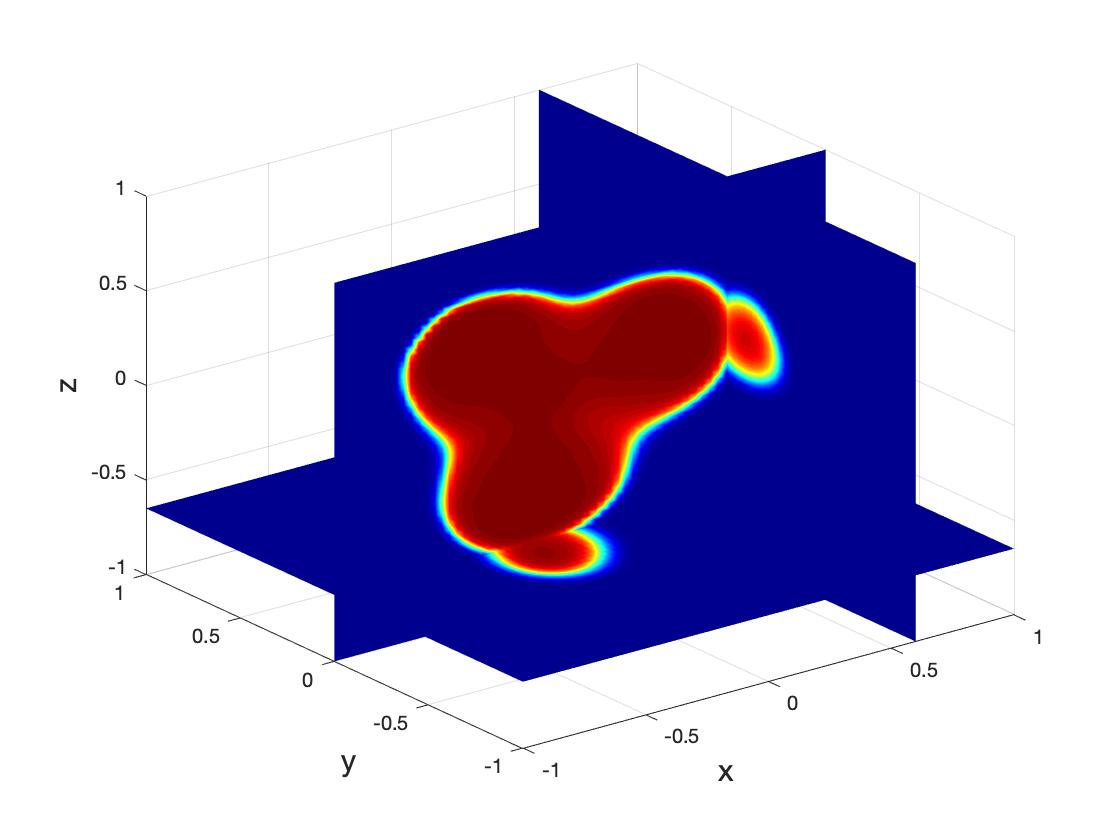}&
	\includegraphics[width=1.3in]{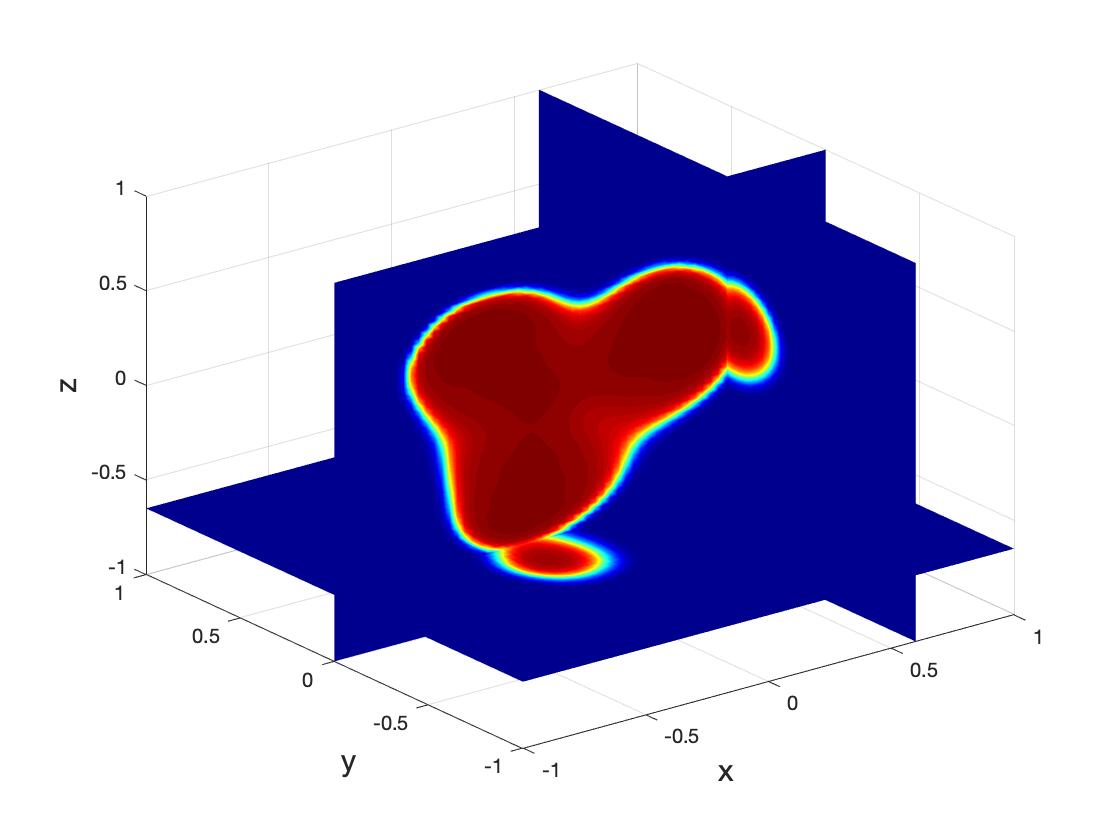}\\
%	\includegraphics[width=0.8in]{test-cir-20pair-noise-5perc-relu-mse-sgd-04-real}&
%	\includegraphics[width=0.8in]{test-cir-20pair-noise-10perc-relu-mse-sgd-04-real}\\
%	%%
%	\includegraphics[width=0.8in]{true-cir-07}&
%	\includegraphics[width=0.8in]{test-cir-1pair-noise-0perc-relu-mse-sgd-07}&
%	\includegraphics[width=0.8in]{test-cir-10pair-noise-0perc-relu-mse-sgd-07}&
%	\includegraphics[width=0.8in]{test-cir-20pair-noise-0perc-relu-mse-sgd-07}&
%	\includegraphics[width=0.8in]{test-cir-20pair-noise-5perc-relu-mse-sgd-07-real}&
%	\includegraphics[width=0.8in]{test-cir-20pair-noise-10perc-relu-mse-sgd-07-real}\\
%	%%
%	\includegraphics[width=0.8in]{true-cir-16}&
%	\includegraphics[width=0.8in]{test-cir-1pair-noise-0perc-relu-mse-sgd-16}&
%	\includegraphics[width=0.8in]{test-cir-10pair-noise-0perc-relu-mse-sgd-16}&
%	\includegraphics[width=0.8in]{test-cir-20pair-noise-0perc-relu-mse-sgd-16}&
%	\includegraphics[width=0.8in]{test-cir-20pair-noise-5perc-relu-mse-sgd-16-real}&
%	\includegraphics[width=0.8in]{test-cir-20pair-noise-10perc-relu-mse-sgd-16-real}\\
	%%
\end{tabular}
\caption{Reconstruction for a duck-shaped inclusion with different noise level.} 
\label{3D_duck}
\end{figure}

%\begin{figure}[htbp]
%\begin{tabular}{ >{\centering\arraybackslash}m{1.3in}>{\centering\arraybackslash}m{1.3in} >{\centering\arraybackslash}m{1.3in}  >{\centering\arraybackslash}m{1.3in}   }
%	\centering
%	True coefficients &
%          $\delta=0$ &
%          $\delta=5\%$ &
%          $\delta=10\%$ \\
%	\includegraphics[width=1.3in]{3D_case1_true}&
%	\includegraphics[width=1.3in]{3D_case1_N0_mu01100}&
%	\includegraphics[width=1.3in]{3D_case1_N5_mu01100}&
%	\includegraphics[width=1.3in]{3D_case1_N10_mu01100}\\
%         \includegraphics[width=1.3in]{I_true_1SPL_3d_2ellipsoid_mu_01_100_present_01_100_201_01}&
%	\includegraphics[width=1.3in]{predict-1SPL-2ellipsoid-noise-0perc-mu-01-100-present-5000-0038-01-02-100}&
%	\includegraphics[width=1.3in]{predict-1SPL-2ellipsoid-noise-5perc-mu-01-100-present-5000-0038-01-02-100}&
%	\includegraphics[width=1.3in]{predict-1SPL-2ellipsoid-noise-10perc-mu-01-100-present-5000-0038-01-02-100}\\
%	%%
%\end{tabular}
%\caption{Reconstruction for different $\mu$: $\mu=(0.1,100)$} 
%\label{3D_ell_mu}
%\end{figure}

%x
%
%x
%
%x
%
%x
%
%
%x
%
%x
%
%x
%
%x
%
%x
%
%x
%
%x
%
%x
%
%x
%
%x
%
%x
%
%x
%
%x
%
%x
%
%
%x
%
%x
%
%x
%
%x
%
%x
%
%x
%
%x
%
%x
%
%x
%
%x

\section{Conclusions:}
\label{sec:conclusions}
Inspired by the DSM \cite{chow2015direct},  this paper proposes a novel deep direct sampling method for the DOT problem, where the neural network architecture is designed to approximate the index functional. Hybridizing the DSM and CNN has several advantages. First, the index functional approximated by the CNN is capable of systematically incorporating multiple Cauchy data pairs, which can improve the reconstruction quality and the algorithmic robustness against the noise. Second, once the neural network is well trained, the data-driven index functional can be executed efficiently, which inherits the main benefit of the conventional DSM. Third, the DDSM can successfully handle challenging cases such as limited data and 3D reconstruction problems, which shows great potential in its practical application. Various numerical experiments justify these findings. Hence, we believe the proposed DDSM provides an efficient technique and a new promising direction for solving the DOT problem. \\

\textbf{Acknowledgements:} The first author and the second author were funded by the startup funding from ShanghaiTech University ( No. 2020F0203-000-16). The third author was funded by NSF DMS-2012465.

%\bibliographystyle{abbrv}
%
%\bibliography{EITbib}

\end{document}